\newtheorem{theorem}{Theorem}[section] 
\newtheorem{lemma}[theorem]{Lemma}     
\newtheorem{proposition}[theorem]{Proposition}
\newcommand{\R}{\mathbb{R}}
\newcommand{\N}{\mathbb{N}}
\newcommand{\tv}{\mathrm{TV}\,}
\newcommand{\BB}{\mathcal{B}}
\newcommand{\DX}{{\Delta x}}
\newcommand{\DT}{{\Delta t}}
\def \ssigma {{\boldsymbol {\sigma} }}
\def \ee {{\rm e}}
\def \cc {{\boldsymbol {c} }}
\def \vv {{\boldsymbol {v} }}
\def \vvv{{\mathbf {v}}}
\title[Decay estimates for the damped wave equation]%
{Decay of approximate solutions for the damped semilinear wave equation 
on a bounded 1d domain}
\author[Amadori, Aqel, Dal Santo]{Debora Amadori, Fatima Al-Zahra' Aqel and Edda Dal Santo}
\begin{document}
\maketitle

\begin{abstract}
In this paper we study the long time behavior for a semilinear wave equation
with space-dependent and nonlinear damping term. 
After rewriting the equation as a first order system, we define a class of approximate solutions
that employ tipical tools of hyperbolic systems of conservation laws, such as the Riemann problem.
By recasting the problem as a discrete-time nonhomogeneous system, which is related to a probabilistic interpretation of the solution, 
we provide a strategy to study its long-time behavior uniformly with respect to the mesh size parameter $\DX=1/N\to 0$.
The proof makes use of the Birkhoff decomposition of doubly stochastic matrices and of accurate estimates on the iteration system as $N\to\infty$.

Under appropriate assumptions on the nonlinearity, we prove the exponential convergence in $L^\infty$ of the solution to the first order system
towards a stationary solution, as $t\to+\infty$, as well as uniform error estimates for the approximate solutions. 
\end{abstract}




\textbf{Keywords:} Space-dependent relaxation model, $L^\infty$ error estimate, damped wave equation, initial--boundary value problem
in one dimension.

\section{Introduction}

In this paper we study the initial--boundary value problem for the $2\times 2$ system in one space dimension
\begin{equation}
\begin{cases}
\partial_t\rho +  \partial_x J  = 0, &\\
\partial_t J  +  \partial_x \rho = - 2 k(x) g(J), &
\end{cases} \label{DWE-rho-J-IBVP}
\end{equation}
where $x\in I\, =\, [0,1]$ and $t\ge 0$, and 
\begin{equation}\label{init-boundary-data}
(\rho,J)(\cdot,0) =(\rho_0, J_0)(\cdot)\,,  \qquad \qquad J(0,t)= J(1,t)=J_b
\end{equation}
for $(\rho_0, J_0)\in BV(I)$ and for a constant $J_b \in\R$.  On the function $k=k(x)$ we assume that either 
\begin{equation}\label{hyp-on-k}
k\ge 0\,,\qquad \int_I k(x)\,dx>0
\end{equation}
or the more restrictive assumption 
\begin{equation}\label{hyp:k-unif-positive}
0<k_1\leq k(x)\leq k_{2}\quad \forall\,x\,,\qquad    k_1,\ k_2>0
\end{equation}
hold, while for $g=g(J)$ we require that
\begin{equation}\label{hyp-on-g}
g\in C^1(\R)\,,\qquad g(0)=0\,,\qquad g'(J)>0 \quad \forall\, J\,. 
\end{equation} 
We remark that the assumption \eqref{hyp-on-k} on $k$ includes the possibility of localized damping, 
for instance, $k(x) = \bar k >0$ on some $(\alpha,\beta)$  with $[\alpha,\beta]\subset (0,1)$, and $k(x)=0$ otherwise. In this paper, 
part of the analysis is carried on under assumption \eqref{hyp-on-k}, while for the proof of the main theorem we require that 
$k(x)$ is uniformly positive as in \eqref{hyp:k-unif-positive}. See Remark~\ref{rem:1.1}--{\bf (iv)}.

\smallskip
Problem \eqref{DWE-rho-J-IBVP}--\eqref{init-boundary-data} is related to the one-dimensional damped semilinear wave equation 
on a bounded interval: if $(\rho,J)(x,t)$ is a solution to~\eqref{DWE-rho-J-IBVP}, \eqref{init-boundary-data}, then the function
$$
u(x,t) = J_b t  - \int_0^x \rho(y,t)\,dy
$$
satisfies $u_x=-\rho$, $u_t=J$ and 
\begin{equation}\label{DWE}
\partial_{tt} u - \partial_{xx} u + 2 k(x) g(\partial_t u)=0\,.
\end{equation}
The equation \eqref{DWE} has been considered in several papers, see for instance 
\cite{Haraux78,Haraux88,Haraux-Zua88,Z-1990,CZ94,Haraux09,APT-2017,Cavalcanti-2018}, the review paper
\cite{Z-Sirev-2005} and the recent monograph \cite{Haraux18}. For the homogeneous boundary conditions 
($u=0$ at both ends, corresponding to $J_b=0$), it is well known that the initial-boundary value problem for \eqref{DWE} 
is well-posed for initial data $(u_0,\partial_t u_0) \in H_0^1(I)\times L^2(I)$, for $k(x)\in L^\infty(I)$ with $k(x)\ge 0$, and decay estimates 
for the energy are obtained, either exponential or polynomial. 

\smallskip
Moreover, in \cite{Haraux09},  $L^p$ decay estimates with $2\le p\le \infty$ are studied for the 1-dimensional problem.
These estimates are obtained under the assumption that $g'$ vanishes at $0$, and using the hypotheses of sufficiently regular data,
$(u_0,\partial_t u_0) \in W^{2,\infty}(I)\times W^{1,\infty}(I)$. This regularity restriction appears to be due to the lack of a Lyapunov functional, 
equivalent to the norm of $(u(\cdot,t), u_t(\cdot,t))$ in $W^{1,\infty}(I)\times L^\infty(I)$.

\smallskip
In this paper, we study a very similar problem, assuming that the damping is space-dependent and 
that $g'>0$, see \eqref {hyp-on-g}. Our main contribution is to develop an alternative approach 
that originates from the point of view of the hyperbolic systems of balance laws.
In particular, we construct approximate solutions that allow us to get an accurate description of the solution,
whose evolution is recast as a discrete time system. Then we find a strategy for the analysis of this system, that makes use of a 
discrete representation formula (and not on Lyapunov functionals). This eventually leads to the decay in $L^\infty$ of the solution 
in terms of $(u_x,u_t)$.
Here $u_x(\cdot,t)$, $u_t(\cdot,t))$ belong to $BV(I)\subset L^\infty(I)$ 
so that $(u(\cdot,t), u_t(\cdot,t))$ in $W^{1,\infty}(I)\times L^\infty(I)$; see Subsection~\ref{sec:converg}.


\bigskip
This paper aims at studying the asymptotic properties of the solutions to \eqref{DWE-rho-J-IBVP}--\eqref{init-boundary-data}, 
naturally described by the stationary solutions to  \eqref{DWE-rho-J-IBVP}:
$$
\partial_x J  = 0\,,\qquad \partial_x \rho = - 2 k(x) g(J)\,.
$$
The initial and boundary conditions \eqref{init-boundary-data} lead to a stationary solution $(\widetilde J, \widetilde \rho)$: 
\begin{equation}\label{stationary-sol}
\widetilde J(x)=J_b\,,\qquad   \widetilde \rho(x)  = - 2 g(J_b) \int_0^x k(y)\,dy + C \,,
\end{equation}
the constant $C$ being uniquely identified by the condition
\begin{equation*}
\int_0^1 \widetilde \rho(x)\,dx = \int_0^1 \rho_0(x)\,dx\,,
\end{equation*}
that results in
\begin{equation}\label{def-C-stationary}
C = \int_0^1 \rho_0(x)\,dx + 2 g(J_b) \int_0^1 k(y)(1-y)\,dy\,.
\end{equation}
For the system \eqref{DWE-rho-J-IBVP} a class of approximations of Well-Balanced type to the Cauchy problem 
was studied in \cite{goto,laurent_Book} and in the papers \cite{AG-MCOM16,AG-Briefs15,AG-AnIHP16}. In these last papers, suitable  
$L^1$ error estimates are derived by means of stability analysis for hyperbolic systems of conservation laws, obtained through 
a suitable adaptation of the Bressan-Liu-Yang functional \cite{BLY,Bressan_Book}. 

The same approach to define approximate solutions is adopted in this paper, for the initial-boundary value problem
\eqref{DWE-rho-J-IBVP}--\eqref{init-boundary-data}. We remark that these approximate solutions can be regarded as 
wave-front tracking solutions \cite{Bressan_Book}, with a special choice of the approximate initial data, having discontinuities 
uniformly distributed on a grid.

The analysis performed in this paper is, however, very different from the one for the Cauchy problem. Indeed, the 
semilinear character of system \eqref{DWE-rho-J-IBVP} and the presence of the (reflecting) boundary conditions
lead us to analyze the problem under an unusual perspective:
it can be recasted as the time evolution of the solutions to a finite dimensional linear system, as follows,
\begin{equation}\label{evol-problem}
\ssigma(t^n+)= B(t^n)\ssigma (t^{n-1}+)=B(t^n)B(t^{n-1})\cdots B(0+)\ssigma(0+)\,,
\end{equation}
where $\ssigma(t^n)$ denotes a vector of \emph{wave sizes} appearing in the approximate solution to 
\eqref{DWE-rho-J-IBVP}, \eqref{init-boundary-data} at time $t^n$, while $B(t^n)$ is a doubly stochastic matrix 
(that is, a nonnegative matrix for which the sum of all the elements by row is 1, as well as by column)
that in general depends on time. 
The size of the transition matrices $B$ is $N=1/{\DX}\in 2\N$, where $\DX>0$ represents the mesh size.  

For a review of the properties of non-negative and stochastic matrices, see references \cite{BOOK_BP,Horn-Johnson,Serre-BOOK}.
We refer the reader to Section \ref{sec:iter-matrix} for more details on the derivation of \eqref{evol-problem} and on the structure of $B(t^n)$.
The behaviour of the vector $\ssigma$ is controlled by the spectral properties of the matrix $B$: whenever $g$ is nonlinear 
(that is, $B$ is not constant in time), the behavior of \eqref{evol-problem} is not trivial and may require advanced matrix analysis' tools, 
such as the concept of Joint Spectral Radius (\cite{Jungers09,GuPro13}). 

Also, a possible approach to the study of exponential stability of $\ssigma(t)\equiv0$ in \eqref{evol-problem} 
goes through the existence of a suitable Lyapunov functional. For $N$ fixed it is certainly possible to construct it, 
for instance by constructing a suitable norm on $\R^{2N}$ which is contractive along the discrete trajectories of the system; 
this is possibly done by means of Schur triangularization theorem \cite[Theorem 2.3.1, p.~101]{Horn-Johnson}
and using the fact that the spectral radius of a square matrix $A$ is the greatest lower bound of all the matrix norms of $A$  
\cite[Lemma 5.6.10, p.~347]{Horn-Johnson}. See also the recent preprint \cite{BCS-18_arXiv}.

However, following this strategy, it does not appear clear how to get the needed information on the size of the eigenvalues, uniformly on $N$. 

We overcame this difficulty by working on iterates of $B$  in \eqref{evol-problem} having a constant balance between $n$ and $N$, 
which is the relevant limit. We showed that the discrete representation formula in Theorem~\ref{theo:main} holds, and that the norm 
$\|\cdot\|_{\ell_1}$ on $\R^{2N}$ has a contractive property after a sufficiently large number of iterates $n=2N$ (see Section~\ref{sec:longtime}).


\medskip
We introduce hereafter the main result of this paper. Let $(\rho^\DX,J^\DX)(x,t)$, with $(x,t) \in (0,1)\times [0,\infty)$
denote the approximate solution for \eqref{DWE-rho-J-IBVP}, \eqref{init-boundary-data} defined by the algorithm 
in Section~\ref{sec:approximate}, with $N\in 2\N$, $\DX=1/N$. 
While its precise definition is given in Subsect.~\ref{subsec:approximate_3.1}, we describe here some essential features.
 
Consider the $3\times3$ system
\begin{equation*}
\begin{cases}
\partial_t\rho +  \partial_x J  & = 0\,, \\
\partial_t J  +  \partial_x \rho + 2 g(J)  \partial_x a & =0\,,  \\
\partial_t a &=0\,,
\end{cases}
\end{equation*}
where $a(x) \dot = \int_0^x k(y)\,dy$, and the piecewise constant initial data
\begin{align}\label{approx-data-intro}
\left((\rho_0)^\DX,(J_0)^\DX, a^\DX \right)(x) = \left(\rho_0(x_j+),J_0(x_j+),a(x_j)\right) \,,  ~~ x\in(x_j,x_{j+1})\,, ~~ x_j=j\DX
\end{align}
with boundary condition $J(0,t)=J(1,t) = 0$ (we assume $J_b=0$ for simplicity).  
Then, the function $(\rho^{\DX},J^{\DX},a^{\DX})(x,t)$ is an {\bf exact solution} of the initial-boundary value problem described here above, corresponding to the {\bf approximate initial data} \eqref{approx-data-intro}. The solution is piecewise constant (with respect to space and time), 
and its discontinuities travel with speed $\in \{\pm1,0\}$. 

More precisely, at time $t=0+$ the solution 
is constructed by piecing together the solutions to the local Riemann problems at each $x_j$ and at the boundaries, see Prop.~\ref{prop:1} 
and Figure~\ref{fig:RP}. When two or more discontinuities (which travel with characteristic speed $\in\{\pm1,0\}$)
interact at a positive time, the solution evolves as described in Prop.~\ref{prop:multiple}; see Figure~\ref{fig:multiple}. \\
We remark that a key property is the approximation of the variable $a(x)$ by piecewise constant functions, 
which implies that the effect of source term is concentrated at the points $x_j$ and results in the discontinuities with speed $=0$ in the solution 
to the Riemann problem, see Figure~\ref{fig:RP}. For a detailed definition of $(\rho^{\DX},J^{\DX})$, also in the case $J_b\not=0$,
we refer to Subsect.~\ref{subsec:approximate_3.1}.

\medskip
The main result of this paper here follows. 

\begin{theorem} \label{main-theorem}
Let $g$ satisfy \eqref{hyp-on-g} and $k$ satisfy \eqref{hyp:k-unif-positive}
\begin{equation*}
0<k_1\leq k(x)\leq k_{2}\,, 
\end{equation*}
for some  $k_1$, $k_2>0$. Given $(\rho_0, J_0)\in BV(I)$ and $J_b\in\R$, let $(\widetilde \rho, \widetilde  J)$ be 
the stationary solution as in \eqref{stationary-sol}--\eqref{def-C-stationary}. Define
\begin{equation}\label{d_*}
d_1 = k_1 \min_{J\in D_J} g'(J)> 0 \,,\qquad d_2 = k_2 \max_{J\in D_J} g'(J)
\end{equation}
where $D_J$ is a closed bounded interval depending on the data, which is invariant for $J$. Finally assume that
\begin{equation}\label{cond-on-d_*}
\ee^{2d_2} - 2d_2 < \ee^{2d_1}\,.
\end{equation}
Then there exist constant values $\hat C_j>0$, $j=1,\ldots,5$ that depend only on the coefficients of the equation and on 
the initial and boundary data, 
such that 
\begin{equation}\label{L-infty-decay}
\begin{aligned}
\| J^\DX(\cdot,t)- \widetilde  J\|_{\infty} &\le \hat C_1 {\DX} + \hat C_2 \ee^{-\hat C_3 t}\,,  \qquad    
\\  
\| \rho^\DX(\cdot,t)- \widetilde \rho(\cdot)\|_{\infty} &\le \hat C_4 {\DX} + \hat C_5 \ee^{-\hat C_3 t}\,,
\end{aligned}
\end{equation}
where $\hat C_3$ is given by
\begin{equation*}
\hat C_3 = \frac{1}{2} |\log C(d_1,d_2)|\qquad     C(d_1,d_2) = \ee^{-2d_1}(\ee^{2d_2}-2d_2)\,.
\end{equation*}
\end{theorem}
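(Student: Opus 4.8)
The plan is to reduce everything to the iteration system \eqref{evol-problem} and exploit its doubly stochastic structure. First I would recall from Section~\ref{sec:approximate} that the approximate solution is an exact piecewise-constant solution, so that the discrepancies $J^\DX-\widetilde J$ and $\rho^\DX-\widetilde\rho$ are encoded in the wave-size vector $\ssigma(t^n)$: concretely, $\|J^\DX(\cdot,t)-\widetilde J\|_\infty$ and $\|\rho^\DX(\cdot,t)-\widetilde\rho\|_\infty$ should be controlled (up to an $O(\DX)$ error coming from the discretization of $a(x)$ and of the data, and from the offset between the stationary profile of the scheme and $(\widetilde\rho,\widetilde J)$) by $\|\ssigma(t^n+)\|_{\ell_1}$, where $t^n$ is the mesh time with $t^n\le t<t^{n+1}$. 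This is where the terms $\hat C_1\DX$ and $\hat C_4\DX$ enter. So it suffices to prove that $\|\ssigma(t^n+)\|_{\ell_1}$ decays exponentially in $n\DX = t$, uniformly in $N$.

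Next I would analyze the product $B(t^n)\cdots B(0+)$ from \eqref{evol-problem}. Each $B(t^n)$ is doubly stochastic, hence a nonnegative contraction in $\ell_1$ (and in $\ell_\infty$), so $\|\ssigma\|_{\ell_1}$ is already nonincreasing; the point is to gain a definite contraction factor over a block of $n=2N$ steps, which corresponds to unit time. The idea is to use the Birkhoff–von Neumann decomposition: each $B(t^n)$ is a convex combination of permutation matrices, and I would track, via the discrete representation formula of Theorem~\ref{theo:main}, how the "mass" of $\ssigma$ is transported by these permutations over $2N$ steps. The nonlinear damping makes $B$ depend on time, so one cannot diagonalize; instead one uses that along every admissible trajectory a fixed positive fraction of the mass must pass through a node $x_j$ where $k\ge k_1>0$, and there it is damped by a factor governed by $g'$, i.e. by $\ee^{-2d_1}$ versus the worst-case amplification $\ee^{2d_2}-2d_2$. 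Assumption \eqref{cond-on-d_*} is exactly what guarantees $C(d_1,d_2)=\ee^{-2d_1}(\ee^{2d_2}-2d_2)<1$, so that over each time-unit block the $\ell_1$ norm contracts by at least $\sqrt{C(d_1,d_2)}$ (the exponent $\tfrac12$ in $\hat C_3$ presumably reflecting that the relevant period is two time units, a round trip of characteristics across $I=[0,1]$).

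From the block contraction $\|\ssigma(t^{n+2N}+)\|_{\ell_1}\le C(d_1,d_2)\|\ssigma(t^n+)\|_{\ell_1}$ one gets $\|\ssigma(t^n+)\|_{\ell_1}\le C_0\, C(d_1,d_2)^{\lfloor n/(2N)\rfloor}\|\ssigma(0+)\|_{\ell_1}$, and since $n/(2N)=t/2$ up to $O(\DX)$, this is $\le \hat C\,\ee^{-\hat C_3 t}\|\ssigma(0+)\|_{\ell_1}$ with $\hat C_3=\tfrac12|\log C(d_1,d_2)|$; the initial quantity $\|\ssigma(0+)\|_{\ell_1}$ is bounded by the total variation of $(\rho_0,J_0)$ plus the data, hidden in $\hat C_2,\hat C_5$. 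Interpolating this with the trivial bound for small $t$, and adding back the $O(\DX)$ from the first step, yields \eqref{L-infty-decay}. The main obstacle is the second step: establishing the uniform-in-$N$ block contraction for the time-dependent doubly stochastic product. The difficulty is that the contraction rate of a single $B(t^n)$ degenerates as $N\to\infty$ (it is $1-O(1/N)$), so one cannot simply multiply per-step rates; one must show that the damping accumulated over $2N$ steps stays bounded below by a constant, which is precisely where the $\ee^{2d_2}-2d_2$ bookkeeping (rather than a crude $\ee^{2d_2}$) and the careful Birkhoff/representation-formula argument of Section~\ref{sec:longtime} are essential.
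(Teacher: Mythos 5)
There is a genuine gap at the very first reduction. You claim that it suffices to prove that $\|\ssigma(t^n+)\|_{\ell_1}$ decays exponentially in $t$, and later you assert the block contraction $\|\ssigma(t^{n+2N}+)\|_{\ell_1}\le C(d_1,d_2)\,\|\ssigma(t^n+)\|_{\ell_1}$. Both statements are false: every iteration matrix $B(\cc)$ has $\lambda=-1$ as an eigenvalue with eigenvector $v_-=(1,-1,-1,1,\dots)$ (Lemma~\ref{lem:BB}(iii)), so the $v_-$-component of $\ssigma$ is preserved in modulus forever and no product of the $B$'s can contract the full $\ell_1$-norm. The paper makes this explicit in Remark~\ref{rem:TV-no-decay}: for strictly monotone $J_0$ the quantity $\tv J^\DX(\cdot,t)=\|\ssigma(t)\|_{\ell_1}$ stays uniformly positive as $t\to\infty$. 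The actual argument must therefore (a) split $\ssigma(0+)$ as $\frac{(\ssigma(0+)\cdot v_-)}{2N}v_-+\widetilde\ssigma(0+)$ with $\widetilde\ssigma(0+)\in E_-=\langle e,v_-\rangle^\perp$, (b) observe that the non-decaying $v_-$-part contributes only $O(1/N)=O(\DX)$ to the \emph{pointwise} values of $J$ and $\rho$ (because these are partial sums $\ssigma\cdot\vv$ and $|v_-\cdot\vv|\le 2$, while $\|v_-\|_{\ell_1}=2N$) --- this, and not the discretization of $a(x)$ or of the data, is where $\hat C_1\DX$ and $\hat C_4\DX$ come from --- and (c) prove the contraction only for the $E_-$-component. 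Your proposal contains none of (a)--(c), and its central sufficient condition cannot be established.

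Even restricted to $E_-$, the contraction is not a direct consequence of "mass passing through damped nodes": the paper needs the further decomposition of $E_-$ along the difference vectors $\vvv_{ij}$ with $i,j$ both in $\mathcal{I}'$ or both in $\mathcal{I}''$ (Lemma~\ref{lem:decomp}), precisely so that in the expansion of $\left[B(0)+\tfrac dN B_1\right]^{2N}$ of Theorem~\ref{theo:main} the order-$1/N$ term $\tfrac{2d}{N}\widehat P$ cancels when two columns are subtracted; without that cancellation one only gets $C_N\to e^{-2d_1}e^{2d_2}\ge 1$ and the argument fails. You do correctly identify several of the paper's quantitative ingredients --- the entrywise Birkhoff bound leading to $e^{-2d_1}$ versus $e^{2d_2}-2d_2$, the role of hypothesis \eqref{cond-on-d_*}, the $2N$-step block and the factor $\tfrac12$ in $\hat C_3$, and the fact that per-step rates $1-O(1/N)$ cannot simply be multiplied --- but the missing spectral decomposition is not a technicality: it is the step that makes the statement true at all.
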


\begin{remark} \label{rem:1.0}
We observe that the decay estimate holds for the exact solution, as $\DX\to 0$:
\begin{equation*}
\begin{aligned}
\| J(\cdot,t)- \widetilde  J\|_{\infty} &\le \hat C_2 \ee^{-\hat C_3 t}\,,  \qquad    
\\  
\| \rho(\cdot,t)- \widetilde \rho(\cdot)\|_{\infty} &\le  \hat C_5 \ee^{-\hat C_3 t}\,,
\end{aligned}
\end{equation*}
Indeed, this property holds after passing to the limit by means of Helly's theorem, see Subsect.~\ref{sec:converg}.

Hence our result is related to \cite[Theorem 3.1]{Haraux09}, where a decay estimate for the solution of the semilinear wave equation with  
$(u_x(0,\cdot), u_t(0,\cdot))\in W^{1,\infty}(0,\ell)\times W^{1,\infty}(0,\ell)$ is obtained. 

First, some differences occur in the assumptions on the damping term: 
we assume that $g'>0$ while in \cite{Haraux09} the possibly more interesting case of a degeneracy of $g'$ at $J=u_t=0$ is considered; 
we consider a space-dependent damping term, $k(x)g(J)$; finally, we assume some restriction on the nonlinearity, namely \eqref{cond-on-d_*}.

Second, about the regularity of the solutions, our approach can deal with $u_x(\cdot,0),~u_t(\cdot,0) \in BV(I)$ and hence less regular than 
$W^{1,\infty}(0,1)$. It would be interesting to extend the decay estimate to $u_x(\cdot,0),~u_t(\cdot,0) \in L^\infty(I)$ 
and to the case of  $g'$ possibly vanishing as in \cite{Haraux09}.

\end{remark}

\begin{remark} \label{rem:1.1}
Hereby we list several other comments on the Main Theorem~\ref{main-theorem}.

\medskip
{\bf (i)} From \eqref{d_*}, it is clear that $d_1\le d_2$ and that for every $d_1>0$ there exists a non-empty interval of values for $d_2$ for which 
\eqref{cond-on-d_*} holds.

\medskip
{\bf (ii)} If $k(x)\equiv \bar k>0$ and $g'(J)\equiv \bar C>0$ are constant (as in the telegrapher's equation, \cite{kac}) 
then $d_1=d_2 = d$ and then \eqref{cond-on-d_*} is satisfied for every $d=  \bar k \bar C >0$. Moreover one has
\begin{equation*}
\hat C_3 = \frac{1}{2} \left|\log (1-2d \ee^{-2d}) \right| \sim d\qquad \mbox{ as } d\to 0\,.
\end{equation*}

\medskip
{\bf (iii)} For \eqref{cond-on-d_*} to hold, it is necessary that $d_1 >0$ and hence that $g'>0$ as in \eqref{hyp-on-g}. Differently,
if $g'$ vanishes at $J=0$, an exponential decay is no longer expected; see \cite{Haraux09}. 

\medskip
{\bf (iv)} {\sl (About localized damping)} In the main theorem we require that $k$ satisfies the assumption \eqref{hyp:k-unif-positive}; 
in particular $k(x)$ has to be uniformly positive on $(0,1)$.

On the other hand, the construction scheme in Section~\ref{sec:approximate} works under the more general assumption 
\eqref{hyp-on-k} on $k(x)$, that include the case of a localized damping.

\smallskip
The analysis provided in Sect.~4 and Subsect.~5.1, 5.2 is valid under the more general assumption \eqref{hyp-on-k}, while 
\eqref{hyp:k-unif-positive} is used from Subsect.~5.3 on. Under the more restrictive assumption (1.9), the iteration matrices $B(t^n)$ (see \eqref{BB}) 
have a specific structure (all the coefficients $c_j$ are non-zero) 
which allows us to use a very simple Birkhoff decomposition for the matrix corresponding to the linear case (in the sense of (ii) above).
See Proposition~\ref{prop:4.3} and Remark~\ref{rem:inequality-B}.  It would be interesting to extend this analysis for the case of localized damping.    
\end{remark}

The paper is organized as follows. In Section~\ref{Sec:2} we recall some preliminaries on Riemann problems and interaction estimates 
for a $3\times3$ hyperbolic system which is equivalent to \eqref{DWE-rho-J-IBVP}, see \eqref{NC-system}.
In Section \ref{sec:approximate} we describe the Well-Balanced (WB) scheme and in Section \ref{sec:iter-matrix} we introduce the evolution problem 
\eqref{evol-problem} mentioned above, focusing on the spectral properties of the matrix $B$. 

Finally, in the long Section \ref{sec:longtime} we prove Theorem~\ref{main-theorem}, whose proof is outlined at the beginning of the Section. 
The proof is based on a probabilistic interpretation of the solution (see \cite{kac,Dalang} and \cite{Bressan-Shen_2000} for a semilinear hyperbolic system with relaxation),  and on the spectral properties of the evolution problem in \eqref{evol-problem}. 
We use Birkhoff decomposition theorem for doubly stochastic matrices and prove an exponential-type formula in Theorem~\ref{theo:main}.

Thanks to these tools, we first address the linear case as in (ii) above (Subsection~\ref{Subsect_5.2}) and 
prove a contraction property for a norm of the iterated matrix in \eqref{evol-problem} 
(Proposition~\ref{prop:5.8}). Finally we investigate the problem with nonlinear damping in Subsection~\ref{subsec:5.4}, 
where the proof of Theorem~\ref{main-theorem} is presented. 

\section{Preliminaries}\label{Sec:2}
In terms of the diagonal variables $f^\pm$, defined by
\begin{equation}\label{diag-var}
\rho=f^+ + f^-\,,\qquad  J=f^+ - f^-
\end{equation}
the system~(\ref{DWE-rho-J-IBVP}) is rewritten as a discrete-velocity kinetic model
\begin{equation}
\begin{cases}
\partial_t f^- -  \partial_x f^- = {k(x)} \,g(f^+ - f^-),  &
\\
\partial_t f^+ +  \partial_x f^+  = -  {k(x)}\, g(f^+ - f^-) \,. &
\end{cases} \label{GT}
\end{equation}

Now we recall some preliminary results from \cite{AG-MCOM16} dealing with Riemann problems and interaction estimates 
for system \eqref{DWE-rho-J-a}. Our approach is based on an alternative formulation of system \eqref{DWE-rho-J-IBVP} that is obtained 
by adding an equation for the antiderivative of $k$:
\begin{equation}\label{def-a}
a=a(x)\,\dot =\int_{0}^x k(y)\,dy\,, 
\end{equation}
which by \eqref{hyp-on-k} satisfies 
\begin{equation*}
a \in AC(\R) \,, \qquad a_x=k\ge 0\,, \qquad \tv a  = a(1) - a(0) = \|k\|_{L^1} >0\,.
\end{equation*}
This leads to consider the following non-conservative homogeneous $3\times3$ system
\begin{equation}
\begin{cases}
\partial_t\rho +  \partial_x J  & = 0\,, \\
\partial_t J  +  \partial_x \rho + 2 g(J)  \partial_x a & =0\,,  \\
\partial_t a &=0\,,
\end{cases} \label{DWE-rho-J-a}
\end{equation}
which in diagonal variables \eqref{diag-var} is written as
\begin{equation}
\begin{cases}
\partial_t f^- - \partial_x f^- - g(f^+ - f^-)\partial_x a &=0\,, \\
\partial_t f^+ + \partial_x f^+ + g(f^+ - f^-)\partial_x a &=0 \,, \\
\partial_t a &=0\,. 
\end{cases} \label{NC-system}
\end{equation}
Notice that the non-conservative product $g(J)\partial_x a$, which in principle is ambiguous across the discontinuities of $a(x)$,  
is well-defined since $J$ is constant along stationary solutions.

Systems \eqref{DWE-rho-J-a}, \eqref{NC-system} are introduced in order to be able to set up the WB algorithm: this procedure consists 
in localizing a source term of bounded extent into a countable collection of Dirac masses in order to integrate it inside a Riemann solver
by means of an elementary wave, which is obviously linearly degenerate. 
The characteristic speed of system \eqref{NC-system} are $\mp 1, 0$ with corresponding right eigenvectors $(0,1,0)^t$, $(1,0,0)^t$ and $(-g,-g,1)^t$. 
We call \textit{$0$-wave curves} those characteristic curves corresponding to the speed $0$.

\smallskip
In the next Proposition we study the solution to the \textit{Riemann problem} for system~\eqref{DWE-rho-J-a}, that is, the initial value problem
for \eqref{DWE-rho-J-a} with unknown $U=(\rho,J,a)$ and with initial data
\begin{equation}\label{eq:Riemann-init-data}
U(x,0) = \begin{cases}
U_\ell& x<0\\
U_r & x> 0
\end{cases}
\end{equation}
for some constant vectors $$U_\ell= (\rho_\ell,J_\ell,a_\ell)\,, \qquad U_r= (\rho_r,J_r,a_r)\,. $$  
Equivalently, we will denote by $(f^-_\ell,f^+_\ell,a_\ell)$, $(f^-_r,f^+_r,a_r)$ 
the left and right states corresponding to Riemann data to system~\eqref{NC-system}. 


%
\begin{proposition}\cite[Prop.1, p.606]{AG-MCOM16}\label{prop:1}
Assume \eqref{hyp-on-g}.
Let  $m<M$, $a_\ell\le a_r$ and set $\delta ~\dot = ~a_r - a_\ell\ge 0$.

\smallskip

\begin{itemize}
\item[(i)]  The solution to the Riemann problem for system \eqref{DWE-rho-J-a}--\eqref{eq:Riemann-init-data} 
is uniquely determined by
\begin{equation}\label{sol-RP}
U(x,t) = 
\begin{cases}
U_\ell& x/t<-1\\
U_*=(\rho_{*,\ell}, J_*,a_\ell) & -1<x/t<0\\
U_{**}=(\rho_{*,r}, J_*,a_r) & 0<x/t<1\\
U_r & x/t> 1
\end{cases}
\end{equation}
with 
\begin{equation}\label{J*_rho*}
J_*+ g(J_*)\delta = f^+_\ell - f^-_r\,,\qquad \rho_{*,r}-\rho_{*,\ell}= -2g(J_*)\delta\,,
\end{equation}
see Figure \ref{fig:RP}. 

\vspace{3pt}

\item[(ii)] The square $[m,M]^2$ is an \underline{invariant domain} for the Riemann problem projected on the $(f^-,f^+)$-plane. 
This means that if $(f^-_\ell,f^+_\ell)$,  $(f^-_r,f^+_r) \in [m,M]^2$, then the solution $U(x,t)$ given in \eqref{sol-RP} satisfies $(f^-,f^+)(x,t)\in [m,M]^2$.
This property is independent on $\delta\ge 0$.

\vspace{3pt}

\item[(iii)] For every pair $U_\ell$, $U_r$ 
with $(f^-_\ell,f^+_\ell)$,\  $(f^-_r,f^+_r) \in [m,M]^2$, let $\sigma_{-1} = (J_* - J_\ell)$ and  $\sigma_{1} = (J_r-J_*)$.
Hence
\begin{equation}\label{ineq:sizes}
\left| |\sigma_1| - |f^+_r - f^+_\ell|\right| \le C_0 \delta\,,\qquad \left| |\sigma_{-1}| - |f^-_r - f^-_\ell|\right| \le C_0 \delta\,,
\end{equation}
where $C_0 = \max\{g(M-m), - g(m-M)\}$. In particular $C_0$ is independent of $\delta$.
\end{itemize}
\end{proposition}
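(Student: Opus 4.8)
The plan is to construct the solution by the classical self-similar ansatz for a $3\times3$ hyperbolic system whose three characteristic speeds $-1<0<1$ are constant and distinct: a self-similar solution must consist of a $-1$-wave, a $0$-wave and a $+1$-wave separating four constant states $U_\ell,U_*,U_{**},U_r$, the $i$-th wave belonging to the $i$-th characteristic field. Since only the $0$-field eigenvector $(-g,-g,1)$ has a nontrivial $a$-component, the whole jump $a_\ell\to a_r$ of the (stationary) variable $a$ is carried by the $0$-wave, whereas $a\equiv a_\ell$ across the $-1$-wave and $a\equiv a_r$ across the $+1$-wave; this already forces the form \eqref{sol-RP}. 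First I would write down the jump relations across each wave. Across the $\mp1$-waves the coefficient $\partial_x a$ vanishes, so the system collapses to the linear block $\partial_t\rho+\partial_x J=0$, $\partial_t J+\partial_x\rho=0$, whose Rankine--Hugoniot conditions with speed $-1$ (resp.\ $+1$) state exactly that $f^+$ (resp.\ $f^-$) is continuous, i.e.\ $f^+_*=f^+_\ell$ and $f^-_{**}=f^-_r$. Along the $0$-wave, the integral curve of the eigenvector $(-g,-g,1)$, the quantity $J=f^+-f^-$ is constant (call it $J_*$) — so that $g(J)\,\partial_x a=g(J_*)\,\partial_x a$ is unambiguous across the $a$-discontinuity — while $f^\pm$ each vary by $-g(J_*)\,da$; integrating from $a_\ell$ to $a_r$ gives $f^\pm_{**}=f^\pm_*-g(J_*)\delta$ and, summing, $\rho_{*,r}-\rho_{*,\ell}=-2g(J_*)\delta$, the second identity in \eqref{J*_rho*}.

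Combining $f^+_*=f^+_\ell$, $f^-_{**}=f^-_r$, $f^+_{**}=f^+_*-g(J_*)\delta$ with $J_*=f^+_{**}-f^-_{**}$ reduces everything to the scalar equation $J_*+g(J_*)\delta=f^+_\ell-f^-_r$, the first identity in \eqref{J*_rho*}. By \eqref{hyp-on-g} and $\delta\ge0$, the map $J\mapsto J+g(J)\delta$ has derivative $1+g'(J)\delta\ge1$ and tends to $\pm\infty$ as $J\to\pm\infty$, hence is a strictly increasing bijection of $\R$; so $J_*$ is uniquely determined and the remaining unknowns $f^-_*,f^+_{**}$ (hence $\rho_{*,\ell},\rho_{*,r}$) are read off explicitly, proving (i). For (ii), since $f^+_*=f^+_\ell$ and $f^-_{**}=f^-_r$ are already in $[m,M]$, it remains to locate $f^-_*=f^+_\ell-J_*$ and $f^+_{**}=f^-_r+J_*$; here I would split according to the sign of $J_*$ (which equals that of $g(J_*)$ since $g(0)=0$): if $J_*\ge0$ then $g(J_*)\delta\ge0$ forces $0\le J_*\le f^+_\ell-f^-_r$, whence both $f^-_*$ and $f^+_{**}$ lie between $f^-_r$ and $f^+_\ell$ and so in $[m,M]$; the case $J_*<0$ is symmetric, with $f^+_\ell$ and $f^-_r$ interchanged. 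None of these bounds involves $\delta$.

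Finally, for (iii) the same relations yield $\sigma_1=J_r-J_*=f^+_r-f^+_{**}=(f^+_r-f^+_\ell)+g(J_*)\delta$ and $\sigma_{-1}=J_*-J_\ell=f^-_\ell-f^-_*=(f^-_\ell-f^-_r)-g(J_*)\delta$, so the reverse triangle inequality bounds the two quantities in \eqref{ineq:sizes} by $|g(J_*)|\,\delta$; and by (ii) one has $J_*=f^+_{**}-f^-_{**}\in[m-M,M-m]$, so monotonicity of $g$ gives $|g(J_*)|\le\max\{g(M-m),-g(m-M)\}=C_0$, which closes the estimate. I expect the only genuinely delicate point to be conceptual rather than computational: one must first fix the correct notion of weak solution of \eqref{NC-system} through the $a$-discontinuity — which is exactly the observation that $J$ is constant on the $0$-wave, making the non-conservative product meaningful — and argue that no other self-similar configuration is admissible; once the solution class is pinned down, the rest is the elementary algebra and sign bookkeeping sketched above.
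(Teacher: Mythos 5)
Your proof is correct. Note that the paper does not actually prove this proposition — it imports it verbatim from \cite[Prop.~1, p.~606]{AG-MCOM16} — but your argument is the standard one for this construction (all three fields are linearly degenerate with constant speeds, the jump of $a$ is carried entirely by the $0$-wave along which $J$ is constant, and $f^+$, $f^-$ are the invariants of the $-1$- and $+1$-waves respectively), and every step checks out: the reduction to the strictly increasing scalar equation $J_*+g(J_*)\delta=f^+_\ell-f^-_r$, the sign splitting for the invariant square, and the identities $\sigma_1=(f^+_r-f^+_\ell)+g(J_*)\delta$, $\sigma_{-1}=(f^-_\ell-f^-_r)-g(J_*)\delta$ together with $J_*\in[m-M,M-m]$ for the bound by $C_0\delta$.
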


\begin{remark}\label{rem:invariant-domain-and-damping}
We remark that the invariance domain property stated in Proposition~\ref{prop:1}-(ii) is due to the hypotheses
on the sign of the damping \eqref{hyp-on-k} and \eqref{hyp-on-g}, that is $k(x)\ge 0$, $g(0)=0$ and $g'(J)>0$. 
In other words, if the initial data for $f^\pm$ belong to a square $[m,M]$ and $a_\ell<a_r$, then the intermediate states (their projections $f^\pm$)
belong to the square as well.

Following the proof in \cite[Prop.1, p.606]{AG-MCOM16}, the assumption on $g$ can be slightly weakened; 
indeed it is sufficient that
\begin{equation}\label{cond:monotone-op}
k(x) g(J) \cdot J\ge 0 \qquad \forall\, x,\ J
\end{equation}
therefore including power-like behavior close to the origin $J=0$. We remark that the condition \eqref{cond:monotone-op} guarantees 
the monotonicity of the operator that appears in the abstract formulation of the problem \eqref{DWE-rho-J-IBVP}--\eqref{init-boundary-data}, 
in view of the application of Hille-Yosida theorem (see for instance \cite[Chapt.10]{Brezis-2011}).
\end{remark}


%
%
%

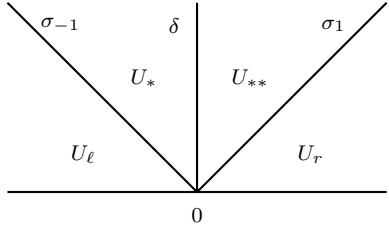
\begin{figure} 
\begin{tikzpicture} 

\draw[thick] (2.5,0.5) -- (2.5,3);
\draw[thick] (0,0.5) -- (5,0.5);
\draw[thick] (2.5,0.5) -- (0,3);
\draw[thick] (2.5,0.5) -- (5,3);

\node () at (1,1) {$U_\ell$};
\node () at (4,1) {$U_r$};
\node () at (1.8,2) {$U_*$};
\node () at (3.2,2) {$U_{**}$};

\node () at (4.3,2.7) {$\sigma_1$};
\node () at (0.7,2.7) {$\sigma_{-1}$};
\node () at (2.2,2.7) {$\delta$};

\node () at (2.5,0.2) {$0$};

\end{tikzpicture}
\caption{The solution to the Riemann problem in Proposition \ref{prop:1}.}\label{fig:RP}
\end{figure}
\begin{figure} 
\scalebox{0.8}
{\begin{tikzpicture} 

\draw[->] (0,0.5) -- (5.5,0.5);
\draw[->] (0.5,0) -- (0.5,5.5);
\draw[dashed,thick, ->] (0.5,0.5) -- (5.5,5.5);
\draw[dashed,thick, ->] (0.5,0.5) -- (-3,3);
\draw[thick] (1,1) -- (5,1);
\draw[thick] (1,1) -- (1,5);
\draw[thick] (1,5) -- (5,5);
\draw[thick] (5,1) -- (5,5);
\draw[dashed, ->] (5,0.5) -- (5,1);
\draw[dashed, ->] (1,0.5) -- (1,1);

\node () at (-2.5,3) {$J$};
\node () at (5.5,5.2) {$\rho$};
\node () at (0.8,5.7) {$f^+$};
\node () at (5.7,0.8) {$f^-$};

\node () at (5,0.2) {$M$};
\node () at (1,0.2) {$m$};
\end{tikzpicture}}
\caption{Invariant domain for systems \eqref{DWE-rho-J-IBVP} and \eqref{GT}}
\label{inv-dom-fig}
\end{figure}
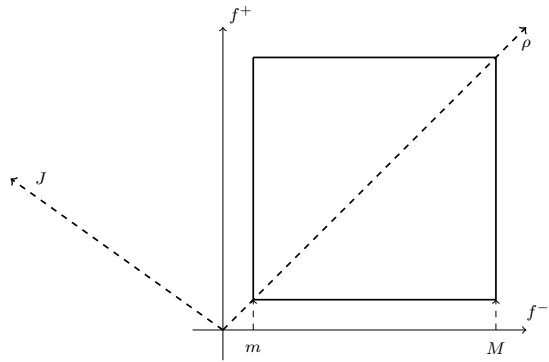

Since the introduction of $a(x)$ yields a nonlinearity, we need to study the interactions of waves in the solutions to \eqref{NC-system}. 
In the notation of Figure \ref{fig:RP}, the amplitude of waves is defined as 
$$
\delta=  a_r-a_\ell
$$ 
for a $0$--wave and 
\begin{align*}
\sigma_{-1} &=  J_* - J_\ell \,=\, -\left( f^-_* - f^-_\ell\right) \,=\,-\left( \rho_* - \rho_\ell \right), \\
\sigma_{1} &=J_r - J_* \,=\, f^+_r - f^+_*  \, = \, \rho_r - \rho_*.
\end{align*}
In other words, if we denote by $\Delta \phi$ the difference $\phi_r -\phi_\ell$ for a certain quantity $\phi$, the sizes $\sigma_{\pm1}$ are given by
\begin{equation}\label{def-sizes}
\sigma_{\pm1} =  \Delta J = \pm \Delta f^\pm = \pm \Delta \rho \,. 
\end{equation}
In particular, we have
\begin{equation}\label{eq:ssigmav-}
\sigma_{1}+ \sigma_{-1} = (J_r - J_*)+ (J_*- J_\ell)=J_r-J_\ell\,.
\end{equation}

The following proposition refines the statement of \cite[Proposition~3]{AG-MCOM16}.

\begin{proposition}[(Multiple interactions)]\label{prop:multiple} 
Assume that at a time $t>0$ an interaction involving a $(+1)$--wave, a $0$--wave and a $(-1)$--wave  occurs, see Figure \ref{fig:multiple}.
Let  $\sigma^-_{-1}$, $\sigma^-_1$ be the sizes of the incoming waves and $\sigma^+_{-1}$, $\sigma^+_1$
be the sizes of the outgoing ones.
Let $\delta = a_r-a_\ell\ge0$ be the size of the  $0$--wave that remains constant across the interaction and assume that 
\begin{equation}\label{A-less-than-1}
(\sup g') \delta <1\,.
\end{equation}
Then, for some $s$ it holds
\begin{equation}\label{mult-inter-matrix-form}
\begin{pmatrix}
\sigma^+_{-1}\\    \sigma^+_1
\end{pmatrix} = \begin{pmatrix}
1-c&c\\
c&1-c
\end{pmatrix} 
\begin{pmatrix}
\sigma^-_{-1}\\
\sigma^-_1
\end{pmatrix},\qquad c=\frac{g'(s)\delta}{g'(s)\delta+1}\,,
\end{equation}
otherwise written as
\begin{equation}\label{mult-inter-component-form}
\begin{aligned}
\sigma^+_{-1} &=  (1-c) \sigma^-_{-1} +c \sigma^-_1\,, \\
\sigma^+_{1} &=  (1-c)\sigma^-_{1} +c \sigma^-_{-1} \,.
\end{aligned}
\end{equation}
Moreover,
\begin{align}\label{eq:no-decay}
|\sigma^+_{-1}| + |\sigma^+_{1}|  &\le~  |\sigma^-_{-1}| + |\sigma^-_{1}|\,,\\
\label{eq:decay}
|\sigma^+_{-1} - \sigma^+_{1}|  &\le~  |\sigma^-_{-1} - \sigma^-_{1}|\cdot \frac{1-\delta (\inf g')}{1+\delta (\inf g')}\,.
\end{align}
\end{proposition}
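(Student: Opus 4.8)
The plan is to reduce the triple interaction to a single application of the Riemann solver of Proposition~\ref{prop:1}. Relabel the states adjacent to the incoming waves: just before the interaction, reading from left to right, we have a far–left state $U_0$, then the $(+1)$–wave of size $\sigma^-_1$, then $U_1$, then the stationary $0$–wave of size $\delta$, then $U_2$, then the $(-1)$–wave of size $\sigma^-_{-1}$, then a far–right state $U_3$; note $a=a_\ell$ on $U_0,U_1$ and $a=a_r$ on $U_2,U_3$. Immediately after the interaction the solution near the interaction point is self–similar, the far–left and far–right states are still $U_0$ and $U_3$, and the $a$–jump at the interaction point is still $\delta$; hence the outgoing configuration is exactly the Riemann solution of Proposition~\ref{prop:1} for the data $(U_0,U_3)$, i.e. a $(-1)$–wave of size $\sigma^+_{-1}$, a new state $U_1'$, the $0$–wave, a new state $U_2'$, a $(+1)$–wave of size $\sigma^+_1$, with $J_{U_1'}=J_{U_2'}=:J^+$.

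First I would record the jump relations across a stationary $0$–wave (reading \eqref{J*_rho*} as a jump condition): $J$ is continuous across it, and $f^+_L-f^-_R = J+g(J)\delta$ for the left/right states $L,R$. Applying this before the interaction to $(U_1,U_2)$, with $J^-:=J_{U_1}=J_{U_2}$, gives $J^-+g(J^-)\delta = f^+_{U_1}-f^-_{U_2}$; applying it after the interaction to $(U_1',U_2')$ gives $J^++g(J^+)\delta = f^+_{U_1'}-f^-_{U_2'}=f^+_{U_0}-f^-_{U_3}$, the last equality coming from Proposition~\ref{prop:1} (the leftmost and rightmost waves of the Riemann fan carry $f^+$ and $f^-$ respectively across).

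Next, using \eqref{def-sizes} — $f^+$ jumps by $\sigma^-_1$ across the incoming $(+1)$–wave and $f^-$ jumps by $-\sigma^-_{-1}$ across the incoming $(-1)$–wave — I would write $f^+_{U_0}-f^-_{U_3}=(f^+_{U_1}-f^-_{U_2})-\sigma^-_1+\sigma^-_{-1}$ and subtract the two displayed identities to get $(J^+-J^-)+(g(J^+)-g(J^-))\delta=\sigma^-_{-1}-\sigma^-_1$. A mean value theorem step $g(J^+)-g(J^-)=g'(s)(J^+-J^-)$ (with $s$ between $J^-$ and $J^+$, hence in the invariant interval of Proposition~\ref{prop:1}(ii)) yields $J^+-J^-=(1-c)(\sigma^-_{-1}-\sigma^-_1)$ with $c=g'(s)\delta/(1+g'(s)\delta)\in[0,1)$. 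Since $\sigma^-_1=J^--J_{U_0}$, $\sigma^-_{-1}=J_{U_3}-J^-$, $\sigma^+_{-1}=J^+-J_{U_0}$ and $\sigma^+_1=J_{U_3}-J^+$, substitution gives $\sigma^+_{-1}=(1-c)\sigma^-_{-1}+c\sigma^-_1$ and $\sigma^+_1=c\sigma^-_{-1}+(1-c)\sigma^-_1$, i.e. \eqref{mult-inter-matrix-form}–\eqref{mult-inter-component-form}, together with the conservation identity $\sigma^+_{-1}+\sigma^+_1=\sigma^-_{-1}+\sigma^-_1$.

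The two inequalities are then immediate from the matrix form. For \eqref{eq:no-decay} one uses $0\le c\le 1$ and the triangle inequality. For \eqref{eq:decay} one computes $\sigma^+_{-1}-\sigma^+_1=(1-2c)(\sigma^-_{-1}-\sigma^-_1)$ with $1-2c=(1-g'(s)\delta)/(1+g'(s)\delta)$; assumption \eqref{A-less-than-1} forces $g'(s)\delta\le(\sup g')\delta<1$, so $1-2c\in(0,1)$, and the monotonicity of $t\mapsto(1-t)/(1+t)$ together with $g'(s)\ge\inf g'$ gives $|1-2c|\le(1-\delta(\inf g'))/(1+\delta(\inf g'))$. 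The step needing the most care — and the only real obstacle — is the first one: correctly identifying the post–interaction configuration with the Riemann solution of Proposition~\ref{prop:1} for $(U_0,U_3)$, in particular that the outer states are unchanged and that the $f^\pm$ values are matched across the pinned $0$–wave; once the geometry is set up, everything else is algebra.
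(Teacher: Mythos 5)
Your proposal is correct and follows essentially the same route as the paper's proof: both rest on the relation \eqref{J*_rho*} applied to the intermediate state before and after the interaction, the mean value theorem producing $g'(s)$, the identification $\sigma_{\pm1}=\Delta J$ from \eqref{def-sizes}, and the resulting factor $1-2c=\frac{1-g'(s)\delta}{1+g'(s)\delta}$ together with monotonicity of $t\mapsto\frac{1-t}{1+t}$ for \eqref{eq:decay}. The only (cosmetic) difference is organizational: you isolate $J_*^+-J_*^-=(1-c)\left(\sigma^-_{-1}-\sigma^-_1\right)$ and substitute directly into the size relations, whereas the paper derives the sum and difference identities \eqref{prop:id1} and \eqref{identity:decay} and solves the resulting $2\times2$ linear system.
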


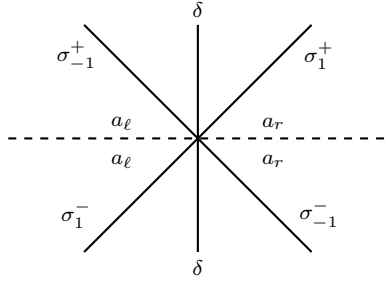
\begin{figure}
\begin{tikzpicture} 

\draw[thick] (2.5,0.5) -- (2.5,3.5);
\draw[thick,dashed] (0,2) -- (5,2);
\draw[thick] (1,0.5) -- (4,3.5);
\draw[thick] (4,0.5) -- (1,3.5);

\node () at (1.5,2.2) {$a_\ell$};
\node () at (3.5,2.2) {$a_r$};
\node () at (1.5,1.7) {$a_\ell$};
\node () at (3.5,1.7) {$a_r$};

\node () at (4.1,3.1) {$\sigma_1^+$};
\node () at (0.9,3.1) {$\sigma_{-1}^+$};

\node () at (0.9,1) {$\sigma_1^-$};
\node () at (4.1,1) {$\sigma_{-1}^-$};

\node () at (2.5,0.3) {$\delta$};
\node () at (2.5,3.7) {$\delta$};

\end{tikzpicture}
\caption{Multiple interaction.}\label{fig:multiple}
\end{figure}

\begin{proof}  Let $J_*^-$, $J_*^+$ be the intermediate values of $J$ before and after the interaction, respectively. 
By \eqref{J*_rho*} these values satisfy 
\begin{equation*}
J^+_* + g(J^+_*) \delta =  f_\ell^+ - f_r^-\,, \qquad \qquad J^-_* - g(J^-_*) \delta =  f_r^+ - f_\ell^-\,.
\end{equation*}
Since the quantity $J_r-J_\ell$ remains constant across the interaction, we get 
$$
J_r-J_\ell = (J_r- J^+_*) + (J^+_*-J_\ell) =  (J_r- J^-_*) + (J^-_*-J_\ell)\,.
$$
Then, by the definition of sizes ($\sigma_{\pm 1} = \Delta J$) we deduce the following identity
\begin{equation}\label{prop:id1}
\sigma^+_1 +  \sigma^+_{-1} = \sigma_1^- +  \sigma^-_{-1} \,.
\end{equation}
The same procedure can be applied to $\rho_r-\rho_\ell$: by \eqref{J*_rho*} and the fact that $\sigma_{\pm 1} = \pm \Delta \rho$, we find  the identity
\begin{equation*}
\sigma^+_1 - \sigma^+_{-1} - 2 g(J^+_*) \delta = \sigma_{1}^- -  \sigma^-_{-1} - 2 g(J^-_*) \delta\,,
\end{equation*}
that can be rewritten as
\begin{align}\nonumber
\sigma^+_1 - \sigma^+_{-1} &= \sigma_{1}^- -  \sigma^-_{-1} + 2\left[ g(J^+_*) - g(J^-_*) \right]\delta\\
&= \sigma_{1}^- - \sigma^-_{-1} + 2 g'(s) \left[ J^+_* - J^-_* \right]\delta  \label{ident-sigma-multi-line-2}
\end{align}
for some $s\in \left(\min\{J_*^+,J_*^-\},\max\{J_*^+,J_*^-\}\right)$.
Notice that 
\begin{equation*}
J^+_* - J^-_* = (J^+_* - J_r) + (J_r - J^-_*) = - \sigma_1^+ + \sigma_{-1}^-
\end{equation*}
and, replacing $J_r$ with $J_\ell$, one has 
\begin{equation*}
J^+_* - J^-_* =  \sigma_{-1}^+ - \sigma_{1}^-\,.
\end{equation*}
Since both equations are true, then one can combine them and write 
\begin{equation*}
J^+_* - J^-_* =  \frac12 \left(\sigma_{-1}^+ - \sigma_{1}^+ + \sigma_{-1}^-  - \sigma_{1}^-\right) \,.
\end{equation*}
By substitution into \eqref{ident-sigma-multi-line-2}, we get
\begin{align*}
\sigma^+_{1} - \sigma^+_{-1} &= \sigma_1^- -  \sigma_{-1}^-  + g'(s)\left(\sigma_{-1}^+ - \sigma_{1}^+ +\sigma_{-1}^- - \sigma_{1}^-\right)\delta\,,
\end{align*}
which leads to 
\begin{equation*}
\left( 1+\delta g'(s) \right) \left(\sigma^+_{1} - \sigma^+_{-1}\right) = \left( 1- \delta g'(s) \right) \left( \sigma_1^- - \sigma_{-1}^- \right)\,.
\end{equation*}
In conclusion, recalling \eqref{prop:id1}, we have the following $2\times2$ linear system
\begin{align}
\sigma^+_1 +  \sigma^+_{-1} & = \sigma_1^- +  \sigma^-_{-1}  \nonumber
\\
\sigma^+_{1} - \sigma^+_{-1} &= \frac{1-g'(s)\delta}{1+g'(s)\delta}  \left( \sigma_1^- -  \sigma_{-1}^- \right)= (1-2c) \left( \sigma_1^- -  \sigma_{-1}^- \right).
 \label{identity:decay}
\end{align}
whose solution is given by \eqref{mult-inter-component-form}, or equivalently by \eqref{mult-inter-matrix-form}.

As for the second part of the proposition, the inequality \eqref{eq:no-decay} follows directly from \eqref{mult-inter-component-form}. 
In order to prove \eqref{eq:decay}, from assumption \eqref{A-less-than-1} and therefore from \eqref{identity:decay} we find
$$
|\sigma^+_{-1} - \sigma^+_1|
\le \frac{1-\delta (\inf g')}{1+\delta (\inf g')} |\sigma^-_{-1} - \sigma^-_{1}|\,. 
$$
This concludes the proof of Proposition.~\ref{prop:multiple}.
\end{proof}

\begin{remark} As a consequence of \eqref{mult-inter-matrix-form}, we can easily check that:
\begin{itemize}
\item the strength of the waves $|\sigma_1| + |\sigma_{-1}|$ remains constant across the interaction when 
$\sigma_{-1}^-\sigma_1^-\ge 0$, that is when the incoming waves have the same sign;
\item on the other hand it decreases strictly whenever  $\sigma_{-1}^-\sigma_1^-<0$, 
leading therefore to a cancellation in terms of the wave strengths.
\end{itemize}
\end{remark}

\section{Approximate solutions}\label{sec:approximate}

In this section we construct WB approximate solutions for the initial--boundary value problem associated to system~\eqref{DWE-rho-J-a}  
(or equivalently \eqref{NC-system}) and initial, boundary conditions \eqref{init-boundary-data} (Subsect.~\ref{subsec:approximate_3.1}), 
study their basic properties (Subsect.~\ref{subsec:approximate_3.2})  and their convergence as $\DX=1/N \to 0$ (Subsect.~\ref{sec:converg}).

\smallskip
To start, we perform the change of variable around the stationary solution $(\widetilde \rho(x),\widetilde J(x)=J_b)$ as in  \eqref{stationary-sol}:
\begin{equation}\label{def:var-around-steady}
v=\rho- \widetilde \rho,\qquad w=J -  J_b\,,
\end{equation}
so that the system \eqref{DWE-rho-J-IBVP}--\eqref{init-boundary-data} rewrites as
\begin{equation}\label{eq:homog}
\begin{cases}
\partial_t v +  \partial_x w  = 0 &\\
\partial_t  w+  \partial_x v = - 2 k(x) \widetilde g (w; J_b) &~~~  \widetilde  g  (w; J_b)=  g( J_b + w) - g(J_b)
\end{cases}
\end{equation}
together with initial-boundary conditions
\begin{equation}\label{init-boundary-data-0}
(v,w)(\cdot,0) =(\rho_0 - \widetilde \rho, J_0 - J_b)(\cdot)\,,  \qquad \qquad w(0,t)= w(1,t)=0
\end{equation}
where $w\mapsto  \widetilde g  (w; J_b)$ has the same properties of $g$ in \eqref{hyp-on-g}, with $\sup g' = \sup \widetilde g'$ 
on corresponding bounded domains, and
\begin{equation}\label{init-data-rho-0}
\int_I v_0\,dx =0\,. 
\end{equation}
From now on we work on the system \eqref{eq:homog}--\eqref{init-boundary-data-0}--\eqref{init-data-rho-0}.
We rename the variables $(v,w)\mapsto (\rho,J)$ and hence assume that
$$
J_b=0\,,\qquad \qquad \int_I \rho_0(x)\,dx=0\,.
$$
Let $D$ be the invariant domain in the $(f^-,f^+)$-variables of Proposition~\ref{prop:1}-(ii),
that is
\begin{equation}\nonumber
D=[\inf_I{f_0^-},\sup_I{f_0^-}]\times[\inf_I{f_0^+},\sup_I{f_0^+}]\,,
\end{equation}
and let 
\begin{equation}\label{D_J-def}
D_J= [J_{{\rm min}}, J_{{\rm max}} ]
\end{equation}
denote the closed interval which is the projection of $D$ on the $J$-axis.
We underline that the invariance of the domain $D$ is due to the "good sign" of the damping term, 
that is $k(x)\ge 0$ and $g'(J)> 0$; see Remark~\ref{rem:invariant-domain-and-damping}.


\subsection{Approximate solutions}\label{subsec:approximate_3.1}
The construction proceeds as in the case of the Cauchy problem (see for instance \cite[p.607]{AG-MCOM16}) 
and is organized into the following steps. See Figure \ref{fig:scheme} for a picture of the scheme for $N=4$.

\medskip\par
\textbf{Step 1: approximation of initial data and of $k(x)$.} Let $N\in 2\N$ be a positive, even number
and set 
\begin{equation*}
  \DX=1/N\,,\qquad \qquad x_j = j\DX\,,\quad j=0,\ldots,N\,.
\end{equation*}
The interval $(0,1)$ is then divided into $N$ cells of length $\DX$, with $x_0=0$ and $x_N=1$.
We approximate the initial data $f_0^\pm$ and $a(x)$ as 
\begin{equation}\label{init-data-approx}
(f_0^\pm)^\DX(x) = f_0^\pm(x_j+)\,,\qquad     a^\DX(x) = a(x_j)\,, \qquad x\in(x_j,x_{j+1})\,. 
\end{equation}
The size of the $0$-wave at a point $0<x_j<1$ is given by
\begin{align}\label{delta-j}
\delta_j = \Delta \{a^\DX\}(x_j) = a(x_j) - a(x_{j-1})= \int_{x_{j-1}}^{x_{j}} k(x) dx\,.
\end{align}
Clearly, we have
\begin{equation}\label{sum-delta_j}
\sum_{j=1}^{N-1} \delta_j = \int_0^{1-\DX} k(x)\,dx \to \|k\|_{L^1}~\mbox{ as }~\DX=\frac 1 N\to 0\,.
\end{equation}
Knowing that $k\in L^1(I)$ and using the absolute continuity of the Lebesgue integral, we can assume $\DX=1/N$ to be sufficiently small so that
\begin{equation}\label{delta-j-small}
C_1 \cdot \delta_j <1\,,\qquad C_1=\sup g'(J)\,,\quad  j=1,\ldots,N-1\,,
\end{equation}
where the supremum is taken over the values of $J$ in the invariant set $D_J$. In this way the assumption \eqref{A-less-than-1} of 
Proposition~\ref{prop:multiple} is satisfied.

For later use, recalling that $\int \rho_0 \,dx =0$ and that $\rho=f^++f^-$, we easily deduce the following inequality:
\begin{equation}\label{int-rho-DX}
\left| \int_I \left[ (f_0^+)^\DX + (f_0^-)^\DX \right]\,dx \right| \le \DX \tv \rho_0\,. 
\end{equation}

\medskip\par
\textbf{Step 2: solution at $t>0$, small $t$.} At $t=0$ each Riemann problem that arises at $0<x_j<1$ is solved using Proposition~\ref{prop:1}.
Moreover, at $x=0$ and $x=1$ we have to deal with two \textit{boundary Riemann problems}. 
For instance, at $x=0$, $t=0$ one has to solve the problem with $(f_0^-,f_0^+) (0+)$ as initial data 
and $J_b=0$ as boundary datum. 
The solution consists of a single $(+1)$-wave and the intermediate state $(f^-_*, f^+_*)$ between $x=0$ and the $(+1)$-wave 
is uniquely determined by
\begin{equation*}
f^-_*= f_0^-\,,\qquad f^+_*- f^-_*= 0 ~~~~\Rightarrow~~~~  f^+_* =  f_0^-\,.
\end{equation*}
The size of the outgoing wave is given by
\begin{equation}\label{size-at-boundary}
\sigma_1= \Delta J = (f_0^+ - f_0^-) = J_0(0+)\,.
\end{equation}    

\medskip\par
\textbf{Step 3: solution at $t>0$, general $t$.} At $t=t^n=n\DT$ with $n\ge 1$, multiple interactions of waves occur at $0<x_j<1$ 
and the newly generated Riemann problems are again solved as in Proposition~\ref{prop:1}.

At $x=0$, let $\sigma_{-1}^-$ be the size of a $(-1)$--wave that hits the boundary. 
Clearly, on the left of this wave the boundary condition  $J_b=0$ is satisfied.
Being $J_r$ the value of $J$ on the right of the incoming wave, its size $\sigma_{-1}^-$ satisfies
$$
\sigma_{-1}^- = \Delta J = J_r\,.
$$
The boundary Riemann problem is solved as before and a new $(+1)$--wave is issued at the point $x=0$, $t=t^n$. 
Since the boundary condition is still satisfied after the interaction, the size of the new wave will be equal to 
\begin{equation}\label{eq:bc}
\sigma_{1}^+ =  \Delta J = J_r  = \sigma_{-1}^- \,.
\end{equation}
Hence the total variation does not change under reflection of waves at the boundaries.
See Figure \ref{fig:boundaryRP} for a picture of this interaction.

\begin{figure}
\begin{tikzpicture} 

\draw[thick] (0,0) -- (0,4);
\draw[thick] (5,0) -- (5,4);

\draw[thick,dashed] (0,2) -- (5,2);
\draw[thick] (1.5,0.5) -- (0,2);
\draw[thick] (3.5,0.5) -- (5,2);
\draw[thick] (0,2) -- (1.5,3.5);
\draw[thick] (5,2) -- (3.5,3.5);

\node () at (4.1,3.3) {$\sigma_{-1}^+$};
\node () at (0.9,3.3) {$\sigma_{1}^+$};

\node () at (0.9,0.7) {$\sigma_{-1}^-$};
\node () at (4.1,0.7) {$\sigma_{1}^-$};

\node () at (0.3,1.3) {$J_b$};
\node () at (0.3,2.7) {$J_b$};
\node () at (4.7,1.3) {$J_b$};
\node () at (4.7,2.7) {$J_b$};

\node () at (1.2,2.2) {$J(0+,t)$};
\node () at (3.8,2.2) {$J(1-,t)$};

\node () at (1.2,1.7) {$J(0+,t)$};
\node () at (3.8,1.7) {$J(1-,t)$};

\end{tikzpicture}

\caption{Interactions with the boundaries $x=0,1$ at time $t>0$. }\label{fig:boundaryRP}
\end{figure}
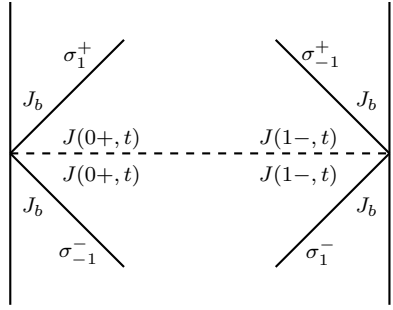

\subsection{Basic properties}\label{subsec:approximate_3.2}
Below we summarize the basic properties of these approximations.

\smallskip
\begin{itemize}
\item \textbf{Invariant domains}. Under the previous construction, the approximate solution attains its values 
in the invariant domain $D$ for every $(x,t)$ as well as the component $J$ is in $D_J$.

\smallskip
\item \textbf{Stationary solutions, stationary approximations}. Recalling \eqref{stationary-sol}, let $\widetilde{J}(x)=J_b\in\R$ and 
$\widetilde \rho(x) = C - 2 g(J_b) a(x)$ be a stationary solution, for some constant $C\in \R$.

In order to be stationary, the approximate initial data \eqref{init-data-approx} must satisfy the boundary condition $J=J_b$ 
and the following relation at $x_j$, $j=1,\ldots,N-1$:
$$
(f_0^\pm)^\DX(x_j+)- (f_0^\pm)^\DX(x_j-)=-g(J_b) \left[ a^\DX(x_j+)- a^\DX(x_j-)\right]\,.
$$
Since $f^\pm = \left(\rho\pm J\right)/2$, it is easy to check that the identity above is valid:
\begin{align*}
(f_0^\pm)^\DX(x_j+)- (f_0^\pm)^\DX(x_j-)&=f_0^\pm(x_j+)- f_0^\pm(x_{j-1}+)\\
&= \frac 12 \left(\widetilde \rho(x_j+)-\widetilde \rho(x_{j-1}+)\right)\\
&= - g(J_b) \left(a(x_j+)- a(x_{j-1}+)\right) \\
&= -g(J_b) \left[ a^\DX(x_j+)- a^\DX(x_j-)\right]\,.
\end{align*}

\smallskip
\item \textbf{Uniform bounds on $\tv(f^\pm)$}. We define 
\begin{align}\label{L-pm}
L_{\pm}(t)&=\sum_{(\pm1)-waves} |\Delta f^\pm| \,, \\ \label{L-0}
L_0(t) &= \frac 12 \left(\sum_{0-waves}  |\Delta f^+|+ |\Delta f^-|  \right)
\end{align}
that by \eqref{def-sizes} are related to $\rho$ and $J$ as 
$$
L_{\pm}(t)=  \tv J(\cdot,t)\,, \qquad \qquad L_{\pm}(t) + L_0(t) =  \tv \rho(\cdot,t)\,.
$$
As in the case of the Cauchy problem \cite{AG-MCOM16}, we claim that $L_\pm(t)$ is not increasing in time
(notice that $L_\pm(t)$ may not tend to zero at $t\to\infty$, uniformly in $N$; see forthcoming Remark~\ref{rem:TV-no-decay}).

Indeed, at time $t\not \in \DT\N$, the quantity $L_\pm(t)$ remains constant, while at $t\in \DT\N$ either it decreases by \eqref{eq:no-decay} 
for interactions inside the domain or it does not change for interactions at the boundary. 
Hence, we obtain that $L_\pm(t)\le L_\pm(0+)$. 
Moreover, using \eqref{ineq:sizes} and \eqref{size-at-boundary}, we have
\begin{align}\nonumber
L_{\pm}(t) \le\, &L_{\pm}(0+)  \\[2mm]
\le\, & \tv f^+(\cdot,0)  + \tv f^-(\cdot,0) + |J_0(0+)| +  |J_0(1-)|  
+ 2 C_0 \tv a \,,\nonumber
\\[2mm]
L_0(t) = &\sum_j |g(J_*(x_j))| \Delta a(x_j) \le C_0 \tv a\,.\nonumber
\end{align}
In conclusion,
\begin{align}\nonumber
& \tv f^+(\cdot,t) + \tv f^-(\cdot,t) =\, L_\pm(t) + 2L_0(t) \\[2mm]
&\qquad  \leq\, \tv f^+(\cdot,0)  + \tv f^-(\cdot,0)   + |J_0(0+)| +  |J_0(1-)|
         + 4\, C_0\, \|k\|_{L^1} \dot = M\,. \label{eq:unif-bound-TV}    
\end{align}
This last inequality provides a bound on the total variation of the solutions which is uniform in $t$ and in $\DX$.
\end{itemize}

\subsection{Strong convergence as $\DX=1/N \to 0$}\label{sec:converg}

It is possible to pass to the limit thanks to Helly's compactness theorem 
(\cite[Theorem 2.4, p.~15]{Bressan_Book} adapted to a bounded interval).
To prove this statement, we observe that the approximate solutions are uniformly bounded (with respect to $t$ and $N=(\DX)^{-1}$) 
in the $L^\infty$--norm and  their total variation is uniformly bounded as well. 
Also, the following property holds: for $M$ defined in \eqref{eq:unif-bound-TV},
\begin{equation}\label{eq:lip-dip-L1}
\int_0^1 |(f^\pm)^\DX (x,t) - (f^\pm)^\DX (x,s)|\,dx\le M |t-s|\qquad \text{for all} ~\DX \quad \text{and}~  t,s\ge 0\,.
\end{equation}
Indeed, let $t$ and $s$ be in the time intervals where no interactions exist, that is 
\begin{equation}\label{time-interval}
 t^n\leq s < t\leq t^{n+\frac 12}\qquad \text{or}\qquad t^{n+\frac 12}\leq s < t\leq t^{n+1}\,,
\end{equation}
then
\begin{align*}
\int_0^1 |(f^\pm)^\DX (x,t) - (f^\pm)^\DX (x,s)|\,dx &=\sum_j |\Delta (f^\pm)^\DX(x_{j} ,\cdot)||\dot{x_j}|~|t-s|\\
&=\sum_{j}|\sigma_{j}(t)|~|t-s|\\
&= \tv (f^{\pm})^\DX(\cdot,t)~|t-s|\leq M ~|t-s|\,,
\end{align*}
where \eqref{eq:unif-bound-TV} is used in the last inequality. Note that for $t$, $s$ in larger intervals than \eqref{time-interval}, 
the map $t\to (f^\pm)^\DX(\cdot,t) \in L^1(0,1)$ is continuous.

Hence, by Helly's theorem \cite[Theorem 2.4]{Bressan_Book}, there exists a subsequence $(\DX)_j\to 0$ such that ${f^\pm}^{(\DX)_j}\to f^\pm$ in $L^1_{loc}(0,1)\times[0,\infty)$
for some functions $f^\pm:(0,1)\times[0,\infty)\to \R$, that are weak solutions of the system~\eqref{NC-system}. 

More precisely, the time-Lipschitz inequality \eqref{eq:lip-dip-L1} is satisfied in the limit as $\DX\to 0$, and hence
functions $f^\pm(x,t)\in L^\infty((0,1)\times [0,\infty))$ are Lipschitz continuous as functions of $t$ in $L^1(0,1)$:
\begin{equation*}
\int_0^1 |f^\pm(x,t) - f^\pm(x,s)|\,dx\le M |t-s|\qquad \text{for all} ~ t,s\ge 0\,.
\end{equation*}
Up to a choice of a representative of $f^\pm$ (the one which is continuous from the right, in space)
one has $f^\pm(\cdot,t)\in BV(I)$, where the function $t\to \tv f^\pm(\cdot,t)$
is non increasing. Also, the $L^\infty$ bounds which are valid for $(f^\pm)^\DX$ are also valid for $f^\pm$; see Remark~\ref{rem:1.0}.

Finally  the equations \eqref{DWE-rho-J-IBVP} for $\rho=f^+ + f^-$\,, $J=f^+ - f^-$ are satisfied in the following sense:

\smallskip
\begin{itemize}
\item[(i)] For all test functions $\phi\in C^1((0,1)\times [0,+\infty))$ one has
\begin{align*}
& \int_0^1\int_{0}^{\infty} \left\{ \rho \partial_t \phi + J \partial_x \phi\right\}\,dxdt + \int_0^1 \rho_0(x) \phi(x,0)\,dx =0\\
& \int_0^1\int_{0}^\infty \left\{ J \partial_t \phi + \rho \partial_x \phi - 2k(x) g(J)\right\}\,dxdt + \int_0^1 J_0(x) \phi(x,0)\,dx =0
\end{align*}
\item[(ii)] $J(0,t) = J_b = J(1,t)$  for a.e. $t>0$\,.
\end{itemize}

\medskip
Following the analysis in \cite{A-1997} for the non-characteristic initial-boundary value problem, one could prove that 
the boundary condition (ii) is attained for every $t>0$ except at most countably many.

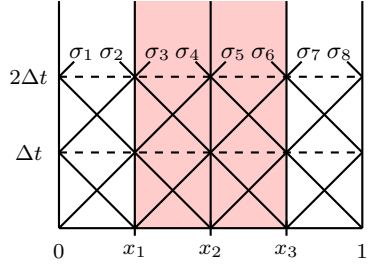
\begin{figure}
\begin{tikzpicture} 

\fill[red!20!white] (1,0) -- (3,0) -- (3,3) -- (1,3) -- (1,0);

\draw[thick] (0,0) -- (0,3);
\draw[thick] (4,0) -- (4,3);
\draw[thick] (0,0) -- (4,0);

\draw[thick] (1,0) -- (1,3);
\draw[thick] (3,0) -- (3,3);
\draw[thick] (2,0) -- (2,3);

\draw[thick,dashed] (0,1) -- (4,1);
\node () at (-0.4,1) {$\DT$};

\draw[thick,dashed] (0,2) -- (4,2);
\node () at (-0.4,2) {$2\DT$};

\draw[thick] (0,0) -- (1,1);
\node () at (0.3,2.3) {\small{$\sigma_1$}};

\draw[thick] (1,0) -- (0,1);
\draw[thick] (1,0) -- (2,1);
\node () at (0.7,2.3) {\small{$\sigma_2$}};
\node () at (1.3,2.3) {\small{$\sigma_3$}};

\draw[thick] (2,0) -- (1,1);
\draw[thick] (2,0) -- (3,1);
\node () at (1.7,2.3) {\small{$\sigma_4$}};
\node () at (2.3,2.3) {\small{$\sigma_5$}};

\draw[thick] (3,0) -- (2,1);
\draw[thick] (3,0) -- (4,1);
\node () at (2.7,2.3) {\small{$\sigma_6$}};
\node () at (3.3,2.3) {\small{$\sigma_7$}};

\draw[thick] (4,0) -- (3,1);
\node () at (3.7,2.3) {\small{$\sigma_8$}};

\draw[thick] (0,1) -- (1,2);

\draw[thick] (1,1) -- (0,2);
\draw[thick] (1,1) -- (2,2);

\draw[thick] (2,1) -- (1,2);
\draw[thick] (2,1) -- (3,2);

\draw[thick] (3,1) -- (2,2);
\draw[thick] (3,1) -- (4,2);

\draw[thick] (4,1) -- (3,2);

\draw[thick] (1,-0.1) -- (1,0.1);
\node () at (1,-0.3) {$x_1$};

\draw[thick] (2,-0.1) -- (2,0.1);
\node () at (2,-0.3) {$x_2$};

\draw[thick] (3,-0.1) -- (3,0.1);
\node () at (3,-0.3) {$x_3$};

\node () at (0,-0.3) {$0$};
\node () at (4,-0.3) {$1$};

\draw[thick] (0,2) -- (0.2,2.2);

\draw[thick] (1,2) -- (0.8,2.2);
\draw[thick] (1,2) -- (1.2,2.2);

\draw[thick] (2,2) -- (1.8,2.2);
\draw[thick] (2,2) -- (2.2,2.2);

\draw[thick] (3,2) -- (2.8,2.2);
\draw[thick] (3,2) -- (3.2,2.2);

\draw[thick] (4,2) -- (3.8,2.2);

\end{tikzpicture}
\caption{Well-balanced scheme in the case $N=4$.}\label{fig:scheme}
\end{figure}

\section{The iteration matrix}\label{sec:iter-matrix}
In this section we describe our strategy to study the long-time behavior of the approximate solutions. Let 
\begin{equation*}
\ssigma(t)=\left(\sigma_1,\ldots,\sigma_{2N}\right) \in \R^{2N}\,,\qquad N\in2\N
\end{equation*}
be the vector of the sizes of the waves which are present in the solution at time $t$, ordered according to
increasing space position, and denote their location by
$$
y_1(t) < y_2(t) < \ldots < y_{2N}(t)\qquad \forall \, t >0\,,\ t\not = t^n,\ t\not =t^{n+1/2}\,. 
$$
To study the evolution in time of the vector $\ssigma$, we make iterative use of Proposition~\ref{prop:multiple}. 
An important role is played by the \emph{transition coefficients} $c=c^n_j$ that appear 
in \eqref{mult-inter-matrix-form} and correspond to a single interaction at time $t^n$ and $x=x_j$, that is:
\begin{equation}\label{def:c_j}
c^n_j = \frac{g'(s_j^n) \delta_j}{g'(s_j^n) \delta_j+1} \ge 0 \,, \qquad s_j^n\in D_J\,,\qquad 
j=1,\ldots,N-1\,,\quad n\ge 1, 
\end{equation}
where $\delta_j$ is given in \eqref{delta-j}, $D_J$ in \eqref{D_J-def} and $s_j^n$ depends on the solution. We define
\begin{equation}\label{def:cc}
\cc=\cc^n=(c^n_1,\ldots,c^n_{N-1})\in\R^{N-1}\,.
\end{equation}
In the following we will often drop the index $n$ when the time $t=t^n$ is fixed and write $c_j$ in place of $c^n_j$, 
so that we denote $\cc=(c_1,\ldots,c_{N-1})$. 

We remark that the map
$$
D_J^{N-1}\ni (J_1,\ldots,J_{N-1}) \mapsto \cc = \left( \frac{g'(J_1) \delta_1}{g'(J_1) \delta_1+1}\,,\ldots, 
\frac{g'(J_{N-1}) \delta_{N-1}}{g'(J_{N-1}) \delta_{N-1}+1}\right)
$$
is continuous over the compact set $D_J^{N-1}\subset \R^{N-1}$, hence its image is a compact set $K\subset\R^{N-1}$, 
which is the set of all the possible values of the vectors $\cc$. 

By the smallness of $\delta_j$ (see \eqref{delta-j} and \eqref{delta-j-small}) we have that
\begin{equation}\label{c_j-bound}
\frac{\inf g'}2 \delta_j\le c^n_j \le \min\{C_1\delta_j, 1/2 \}\,, \qquad j=1,\ldots,N-1\,.
\end{equation}
Let us give an estimate on the $\ell_1$-norm of $\cc^n$, being $\|\cc^n\|_{1}= \sum_{j=1} ^{N-1} c^n_j$.
Recalling \eqref{sum-delta_j} and \eqref{c_j-bound}, we immediately get  
\begin{equation}\label{bound-on-sum-c_j}
\frac{\inf g'}2   \int_0^{1-\DX} k(x)\,dx   \le   
\|\cc^n\|_{1} \le C_1  \|k\|_{L^1} \,.
\end{equation}
In the next lemma we relate the iteration step to a suitable \emph{transition matrix} $B$.

\begin{lemma}\label{lem:BB}
At time $t^n=n\DT$ the vector $\ssigma$ evolves according to 
\begin{equation}\label{eq:evol-of-sigma}
\ssigma(t^n+)= B(\cc) \,\ssigma (t^{n-1}+)\,,\qquad  n\ge 1
\end{equation}
where $B(\cc) \in \R^{2N \times 2N}$ is 
\begin{equation}\label{BB}
{B}(\cc)= \begin{bmatrix}
0&  1  & 0 & 0&\cdots&0 &0&0&0\\
c_1 &0 &0&1-c_1&\cdots&0&0&0&0\\
1-c_1&0& 0 & c_1  & &\vdots &\vdots \\
\vdots&\vdots& &\vdots &\vdots &\vdots&\vdots \\
0&0&0&0&\cdots& c_{N-1} &0&0 & 1-c_{N-1}\\
0&0&0&0&\cdots&1-c_{N-1}& 0 &0 & c_{N-1}\\
0&0&0&0&\cdots&0&0&1&0
\end{bmatrix}
\end{equation}
which is doubly stochastic
\footnote{A doubly stochastic matrix is a nonnegative matrix for which the sum of all the elements by row is 1, as well as by column}.  
The following properties hold:
\begin{itemize}
\item[(i)]  The determinant of $B$ is 
\begin{equation}\label{det-of-B-formula}
\det(B) = - \left(1-2c_1\right) \cdots  \left(1-2c_{N-1}\right)\,.
\end{equation}

\item[(ii)] The eigenvalues $\lambda_i$ of $B$ satisfy $|\lambda_i|\le 1$ for all $i=1,\ldots,2N$;

\item[(iii)] The values $\lambda=\pm 1$ are eigenvalues with corresponding (left and right) eigenvectors
\begin{align}\label{v-pm}
\begin{aligned}
\lambda_-= -1\,,\qquad & v_{-}=(1,-1,-1,1,\ldots,1,-1,-1,1)\,,\\
\lambda_+= 1\,,\qquad & e=(1,1,\ldots,1,1)\,.
\end{aligned}
\end{align}

\item[(iv)] If 
\begin{equation}\label{hyp:loop2}
c_j \cdot c_{j+1}>0\qquad \mbox{ for some }j\,, 
\end{equation}
that is, if there are two consecutive coefficients that do not vanish, 
then the eigenvalues with maximum modulus are exactly two ($\lambda=\pm 1$) and they are simple. 
\end{itemize}
\end{lemma}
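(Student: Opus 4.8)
The plan is to read off the three row-types of $B(\cc)$ from the one-step dynamics and then treat the four claims in turn. Rows~$1$ and $2N$ come from the boundary reflections: a $(-1)$--wave at $x=0$ (resp.\ a $(+1)$--wave at $x=1$) is reflected into a $(+1)$--wave (resp.\ $(-1)$--wave) of the same size, cf.~\eqref{eq:bc}. Each interior node $x_j$ contributes Rows~$2j$ and $2j+1$, which are exactly the triple-interaction rule of Proposition~\ref{prop:multiple}: the incoming $(\pm1)$--waves are recombined by the matrix with diagonal entries $1-c_j$ and off-diagonal entries $c_j$, as in \eqref{mult-inter-matrix-form}. Double stochasticity is then immediate from $0\le c_j\le1/2$ (see \eqref{c_j-bound}). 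This description also shows that one step is ``free transport of the waves, followed by the nodal recombinations'', which factors $B=R\,P_\pi$, where $P_\pi$ is the permutation matrix of the transport map and $R$ is block diagonal, with one $2\times2$ reflection block $R(c_j)$ per interior node and two $1\times1$ identity blocks coming from the two boundary reflections.

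For (i) I would use $\det R(c_j)=(1-c_j)^2-c_j^2=1-2c_j$, so that $\det R=\prod_{j=1}^{N-1}(1-2c_j)$, and then check that $\pi$ is a single $2N$--cycle — it sends $1\mapsto2\mapsto4\mapsto\cdots\mapsto 2N\mapsto 2N-1\mapsto 2N-3\mapsto\cdots\mapsto3\mapsto1$ — so that $\det P_\pi=\mathrm{sgn}(\pi)=(-1)^{2N-1}=-1$; this gives \eqref{det-of-B-formula}. Claims (ii) and (iii) are short: $B\ge0$ with unit row sums gives $\|B\|_\infty=1$, hence $\rho(B)\le1$; double stochasticity gives $Be=e$ and $e^\top B=e^\top$; and $Bv_-=-v_-$ (together with $v_-^\top B=-v_-^\top$) follows by checking the three row-types (resp.\ column-types) against the $4$--periodic sign pattern of $v_-$, so that $\pm1$ are indeed eigenvalues, of maximal modulus by (ii).

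For (iv) I would argue on the digraph $G(B)$ (arc $i\to j$ iff $B_{ij}>0$) by the Perron--Frobenius theory of irreducible nonnegative matrices, establishing two things: $B$ is irreducible, and its index of imprimitivity is $2$. Irreducibility does not use \eqref{hyp:loop2}: the arcs present for \emph{every} value of $\cc$, namely $1\to2$, $2N\to2N-1$ and, for $j=1,\dots,N-1$, $2j\to2j+2$ and $2j+1\to2j-1$ (these last of weight $1$ or $1-c_j>0$), already form the Hamiltonian cycle $1\to2\to4\to\cdots\to2N\to2N-1\to\cdots\to3\to1$. For the index of imprimitivity $h$: from $Bv_-=-v_-$ with $v_-\in\{\pm1\}^{2N}$, $B\ge0$ and $\sum_jB_{ij}=1$, the relation $\sum_jB_{ij}(v_-)_j=-(v_-)_i$ forces $(v_-)_j=-(v_-)_i$ for every $j$ with $B_{ij}>0$; hence every arc reverses the sign of $v_-$, every directed cycle has even length, and $h$ is even. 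If \eqref{hyp:loop2} holds for some $j$ then $B_{2j+1,2j+2}=c_j>0$ and $B_{2j+2,2j+1}=c_{j+1}>0$, so $\{2j+1,2j+2\}$ is a $2$--cycle and $h\mid2$; therefore $h=2$. Perron--Frobenius then gives that the peripheral spectrum of $B$ is exactly $\{+1,-1\}$, with both eigenvalues algebraically simple, which is (iv).

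The delicate point is (iv), precisely the claim that the index of imprimitivity is \emph{exactly} $2$. The ``all cycles even'' half is clean once one notices the sign-flip property of $v_-$, but the ``$h=2$'' half genuinely requires a $2$--cycle: with only \emph{one} nonzero coefficient $c_j$ the shortest extra cycles created have lengths $2j$ and $2(N-j)$, so one would only get $h\mid 2\gcd(N,j)$, which may exceed $2$. This is exactly why \eqref{hyp:loop2} demands two \emph{consecutive} nonvanishing coefficients — that is what produces the length-$2$ cycle $\{2j+1,2j+2\}$ and pins $h$ down to $2$.
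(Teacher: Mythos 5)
Your proof is correct and follows essentially the same route as the paper's: a factorization of $B$ into a permutation times a block-diagonal recombination matrix for the determinant, a row-sum/Gershgorin-type bound for (ii), direct verification for (iii), and the Perron--Frobenius/Romanovsky characterization of the peripheral spectrum via the digraph of $B$ for (iv), including the same Hamiltonian cycle for irreducibility. The only (harmless) variations are that you use the single $2N$-cycle factorization rather than the paper's $B=B_2(\cc)B_1$ with $B_1$ a product of pair-swaps, you derive the evenness of all cycle lengths from the sign-flipping relation $Bv_-=-v_-$ rather than from the absence of ``diagonal arcs'', and you pin the imprimitivity index to $2$ with the explicit $2$-cycle between nodes $2j+1$ and $2j+2$ instead of the paper's two closed paths of lengths $2j$ and $2j+2$ through node $1$.
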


\begin{proof}
The construction is divided into three steps. 

\smallskip
\textbf{1.} At time $t=(n-\frac12)\DT$, $n\ge 1$, each pair of components $\sigma_{2i-1}$ and $\sigma_{2i}$ are switched, $i=1,\ldots,N$.
In matrix form, one has the permutation
\begin{equation}\label{B1}
\ssigma(t+) = B_1 \ssigma(t-)\,,\quad B_1\doteq \begin{bmatrix}
0&1&0&\cdots&0&0\\
1&0&0&\cdots&0&0\\
\vdots&\vdots& \ddots & &\vdots &\vdots \\
\vdots&\vdots& &\ddots &\vdots &\vdots \\
0&0&0&\cdots&0&1\\
0&0&0&\cdots&1&0
\end{bmatrix}.
\end{equation}

\smallskip
 \textbf{2.} At time $t=n\DT$, by \eqref{mult-inter-matrix-form} we have
\begin{equation}\label{B2}
\ssigma(t+) = B_2\ssigma(t-)\,, \quad 
B_2(\cc)  = \begin{bmatrix}
1&  0  & 0 &\cdots &0&0&0\\
0&c_1 & {1-c_1}&\cdots&0&0&0\\
0&{1-c_1}& c_1 &  & &\vdots &\vdots \\
\vdots&\vdots& &\ddots &\vdots &\vdots&\vdots \\
0&0&0&\cdots&c_{N-1} &1-c_{N-1}&0\\
0&0&0&\cdots&1-c_{N-1} & c_{N-1} &0\\
0&0&0&\cdots&0&0&1
\end{bmatrix}.
\end{equation}

\smallskip
\textbf{3.} Finally we write 
\begin{equation}\label{B-of-cc}
B(\cc)  \doteq B_2(\cc)  B_1
\end{equation}
and obtain \eqref{BB}.
 
\paragraph{Proof of (i).} By the Binet Theorem, see \cite[p. 28]{Horn-Johnson}, we have
\begin{equation*}
\det(B) = \det(B_2)  \det(B_1)
\end{equation*}
where 
\begin{equation*}
\det(B_1) = 1\,,\qquad  \det( B_2)  = (2c_1 - 1) \cdots (2c_{N-1} - 1)\,. 
\end{equation*}
Since $(N-1)$ is odd, we obtain \eqref{det-of-B-formula}\,. 

\paragraph{Proof of {\it (ii)} and  {\it (iii)}.} By Gershgorin Theorem, see \cite[p. 387]{Horn-Johnson}, all the eigenvalues of the matrix $B$ are located 
in the circle of center $0$ and radius $1$ in the complex plane. Indeed, all the terms on the diagonal are 0 and 
$$
\sum_{j=1\,,i\not = j}^{2N} |B_{ij}|=1\,,\qquad \forall\, i\,.
$$
Hence  {\it (ii)} follows. About {\it (iii)} it is immediate to check that 
$$
B v_{-}=- v_{-}\,, \qquad  v_{-}^t   B =- v_{-}^t 
$$
while $Be=e$ and $e^t B = e^t$ follow by the double stochastic character of $B$.

\paragraph{Proof of {\it (iv)}.} It remains to prove that $\lambda_{\pm}$ are the {\sl only} eigenvalues of $B$ with modulus 1, 
while all the other have modulus $<1$. 

We claim that $B$ satisfies the hypotheses of Romanovsky Theorem, see \cite{Romanovsky} and  \cite[p. 541]{Horn-Johnson}.  
The latter result states that a nonnegative irreducible matrix $A\in M_n(\R)$ has exactly $p\in\N$ eigenvalues with maximum modulus if, 
for any node of the corresponding directed graph, $p$ is the greatest common divisor of the lengths of all the directed paths 
that both start and end at a same node.

See Figure \ref{fig:1} for a picture of the graph related to the matrix ${B}=[{B}_{ij}]_{i,j=1,\dots 2N}$, 
where each node correspond to a row $i$ and each directed arc $(i,j)$ corresponds to a non-zero element ${B}_{ij}$. 
Remark that the graph of ${B}$ can be deduced by noticing that the first row is represented by the arc $(1,2)$, 
the last row by the arc $(2N,2N-1)$ and that each $2\times 4$ submatrix occupying the block of rows $2j,2j+1$ and columns $2j-1,\dots,2j+2$,
\begin{equation*}
{\hat B}_j=\begin{bmatrix}
c_j &0 &0&1-c_j\\
1-c_j&0& 0 & c_j
\end{bmatrix} \qquad j=1,\dots, N-1,
\end{equation*}
corresponds to a squared subgraph made of the arcs $(2j,2j-1)$, $(2j,2j+2)$, $(2j+1,2j-1)$, $(2j+1,2j+2)$.
Notice that, if $c_j=0$, then only the upper arc $(2j,2j+2)$ and the lower one $(2j+1,2j-1)$ survive in the squared subgraph related to ${\hat B}_j$.
The whole graph is then obtained by juxtaposing the arcs $(1,2)$, $(2N,2N-1)$ to the subgraphs representing ${\hat B}_j$, for $j=1,\dots,N-1$.


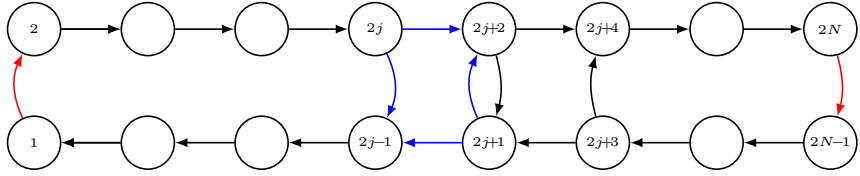
\begin{figure}

\begin {tikzpicture}[-latex ,auto, node distance =1.5 cm and 1.5cm ,on grid ,
semithick ,
state/.style ={ circle ,draw , minimum width =0.7 cm}]
\node[state] (C) {\tiny{$1$}};
\node[state] (A) [above =of C] {\tiny{$2$}}; 
\node[state] (B) [above right =of C] {}; 
\node[state] (D) [right =of C] {}; 
\node[state] (E) [right =of B] {}; 
\node[state] (F) [right =of E] {\tiny{$2j$}}; 
\node[state] (G) [right =of F] {\tiny{$\hspace{-3pt}2j\hspace{-3pt}+\hspace{-3pt}2\hspace{-3pt}$}}; 
\node[state] (H) [right =of G] {\tiny{$\hspace{-3pt}2j\hspace{-3pt}+\hspace{-3pt}4\hspace{-3pt}$}}; 

\node[state] (I) [right =of D] {}; 
\node[state] (J) [right =of I] {\tiny{$\hspace{-3pt}2j\hspace{-3pt}-\hspace{-3pt}1\hspace{-3pt}$}}; 
\node[state] (K) [right =of J] {\tiny{$\hspace{-3pt}2j\hspace{-3pt}+\hspace{-3pt}1\hspace{-3pt}$}}; 
\node[state] (L) [right =of K] {\tiny{$\hspace{-3pt}2j\hspace{-3pt}+\hspace{-3pt}3\hspace{-3pt}$}}; 

\node[state] (M) [right =of H] {}; 
 
\node[state] (N) [right =of L] {}; 

\node[state] (O) [right =of N] {\tiny{$\hspace{-3pt}2N\hspace{-3pt}-\hspace{-3pt}1\hspace{-3pt}$}}; 

\node[state] (Q) [right =of M] {\tiny{$2N$}}; 

\path[color=red] (C) edge [bend left =25] node[below =0.15 cm] {} (A);

\path[color=blue] (K) edge [bend left =25] node[below =0.15 cm] {} (G);
\path (G) edge [bend right = -15] node[below =0.15 cm] {} (K);



\path[color=blue] (F) edge [bend left =25] node[below =0.15 cm] {} (J);

\path (L) edge [bend right = -15] node[below =0.15 cm] {} (H);


\path[color=red] (Q) edge [bend right = -15] node[below =0.15 cm] {} (O);

\path (A) edge node {} (B);
\path (D) edge node {} (C);

\path (B) edge node {} (E);
\path (I) edge node {} (D);

\path (A) edge node {} (B);
\path (D) edge node {} (C);

\path (E) edge node {} (F);
\path (J) edge node {} (I);

\path[color=blue] (K) edge node {} (J);
\path[color=blue] (F) edge node {} (G);

\path (G) edge node {} (H);
\path (L) edge node {} (K);

\path (H) edge node {} (M);
\path (N) edge node {} (L);

\path (O) edge node {} (N);
\path (M) edge node {} (Q);

\end{tikzpicture}
\caption{\label{fig:1} The graph corresponding to ${B}$ when $c_j$, $c_{j+1}>0$. 
The red arcs correspond to the first and final row of the matrix, while the blue arcs connecting the nodes $2j-1,2j,2j+1,2j+2$ 
correspond to the submatrix ${\hat B}_j$.}
\end{figure}


First, notice that $B$ is irreducible, which is equivalent to say that the graph is totally connected, namely that each node can be reached 
from any other node via a path made of arcs present in the graph: this holds true since one can always follow the circuit 
$(1,2,4,\dots,2j,2j+2,\dots,2N,2N-1,\dots, 2j+1,2j-1,\dots,3,1)$ from any node in the graph.
Secondly, the length of any path in the graph connecting a node to itself can be divided at most by $2$, which means that in this case $p=2$.
Indeed, there is no way to obtain a path of odd length because there are no diagonal arcs. 
Moreover, by assumption there exists an index $j$ such that $c_j$, $c_{j+1}$ are not zero as in Figure \ref{fig:1}.

Then, it is easy to see that there are at least two  
paths connecting the node $1$ to itself of lengths 
$2j$ and $2j+2$ and the great common divisor must be $2$.

Now, by the Romanovsky Theorem we can conclude that $\lambda_{\pm}$ are the only two eigenvalues with modulus $1$ and 
the proof of (iv) is complete.
\end{proof}

\begin{remark}\label{rem:B-dep-on-t} 
Notice that in general $B_2$ depends on $t^n$, since the coefficients $c_j$ depend on $g'(J)$. 
However, the structure of the matrix $B$ (the coefficients which are $\not =0$) 
does not change with $n$, in the sense that, for a fixed $j$, either $c^n_j\not=0 $ for every $n$ or $c^n_j =0 $ for every $n$.
\end{remark}

It is well known that doubly stochastic matrices can be written as a convex combination of permutations 
by Birkhoff Theorem (\cite[Theorem 8.7.2]{Horn-Johnson}). In the next proposition, for $\cc$ constant 
we give an explicit Birkhoff decomposition of the matrix $B(\cc)$.

\begin{proposition}\label{prop:4.3}
	Let $\cc=c\, (1,\ldots,1)\in\R^{N-1}\,$, for some constant $c\in [0,1/2)$. Then the matrix $B$ can be decomposed as	
	\begin{equation}\label{decompose-B}
		B(\cc)=(1-c)B(0)+cB_1.
	\end{equation}
\end{proposition}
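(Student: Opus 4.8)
The plan is to reduce everything to the factorization $B(\cc)=B_2(\cc)\,B_1$ recorded in \eqref{B-of-cc} together with the block structure of $B_2$. From \eqref{B2} the matrix $B_2(\cc)$ is block diagonal: it carries the two $1\times1$ corner blocks $[1]$ in positions $(1,1)$ and $(2N,2N)$ and, for $j=1,\dots,N-1$, the $2\times2$ block $\left[\begin{smallmatrix} c_j & 1-c_j\\ 1-c_j & c_j\end{smallmatrix}\right]$ on rows and columns $2j,\,2j+1$, all remaining entries being zero. I would begin by writing this down explicitly.

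Next I would specialize to $\cc=c\,(1,\dots,1)$, so that $c_j\equiv c$ and each $2\times2$ block splits as $\left[\begin{smallmatrix} c & 1-c\\ 1-c & c\end{smallmatrix}\right]=(1-c)\left[\begin{smallmatrix} 0&1\\ 1&0\end{smallmatrix}\right]+c\left[\begin{smallmatrix} 1&0\\ 0&1\end{smallmatrix}\right]$, while each corner block splits trivially as $[1]=(1-c)[1]+c[1]$. Comparing block by block with \eqref{B2} evaluated at $\cc=0$ — whose $2\times2$ blocks are the transposition $\left[\begin{smallmatrix} 0&1\\ 1&0\end{smallmatrix}\right]$ and whose corners are again $[1]$ — this yields the identity $B_2(\cc)=(1-c)\,B_2(0)+c\,\mathbb{I}_{2N}$, where $\mathbb{I}_{2N}$ denotes the $2N\times2N$ identity matrix. (Here one uses that the diagonal positions of $\mathbb{I}_{2N}$ are all contained in the union of the blocks of $B_2$, so that adding $c\,\mathbb{I}_{2N}$ only alters the diagonal of each block and leaves the off-block zeros untouched.)

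Finally I would multiply this identity on the right by $B_1$ and use $\mathbb{I}_{2N}B_1=B_1$ together with $B(0)=B_2(0)B_1$ from \eqref{B-of-cc}, obtaining
\[
B(\cc)=B_2(\cc)B_1=(1-c)\,B_2(0)B_1+c\,B_1=(1-c)\,B(0)+c\,B_1,
\]
which is exactly \eqref{decompose-B}.

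There is no genuine analytic difficulty here; the argument is pure bookkeeping with the block indices. The only point that needs care is checking that the $2\times2$ blocks of $B_2$ really occupy rows and columns $2j,2j+1$ and that right-multiplication by $B_1$ converts the block picture of $B_2$ into the pattern displayed in \eqref{BB}. As an independent sanity check one can also verify \eqref{decompose-B} directly entrywise, comparing the rows of $B(\cc)$, $B(0)$ and $B_1$ as written in \eqref{BB} and \eqref{B1}; this is short and leads to the same conclusion.
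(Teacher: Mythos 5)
Your argument is correct and is essentially identical to the paper's proof: both decompose $B_2(\cc)=(1-c)B_2(0)+c\,I$ using the constancy of $\cc$ and then right-multiply by $B_1$ via $B(\cc)=B_2(\cc)B_1$. You have merely written out the block-by-block verification that the paper leaves implicit.
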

\begin{proof}
	Since $\cc$ is  constant, then the matrix $B_2(\cc)$ in \eqref{B2} can be written as
	\begin{equation}\label{decompose-B2}
		B_2(\cc)=(1-c)B_2(0)+cI.
	\end{equation}
	Recalling that  $B(\cc)  = B_2(\cc)  B_1$ and substituting \eqref{decompose-B2}, we obtain \eqref{decompose-B}. 
\end{proof}

\begin{remark}\label{rem:inequality-B} 
Assume that  \eqref{hyp:k-unif-positive} holds, that is $0<k_1\leq k(x)\leq k_{2}$ for some positive $k_1,k_2$. 
Hence, see \eqref{delta-j}, $\delta_j$ is bounded as
$$
\frac{k_1}N \le \delta_j \le \frac{k_2}N\,.
$$
Let us define $d_1$, $d_2$ as in \eqref{d_*}, that is 
\begin{equation*}
d_1 = k_1 \min_{J\in D_J} g'(J)> 0 \,,\qquad d_2 = k_2 \max_{J\in D_J} g'(J)\,.
\end{equation*}
By the monotonicity of the map $x\to \frac x {x+1}$, the bounds in \eqref{c_j-bound} become: 
\begin{equation}\label{c_j-bound-k-bounded}
\frac{d_1/N}{1+ d_1/N} \leq c^n_j \leq\frac{d_2/N}{1+d_2/N}\,.
\end{equation}
Hence
\begin{equation*} 
B(\cc^{n})\leq \left(1-\frac{d_1/N}{1+ d_1/N} \right)B(0)+ \frac{d_2/N}{1+d_2/N} B_{1}\,,
\end{equation*}
and after simple passages, it is rewritten as
\begin{equation}\label{inequality-B} 
B(\cc^{n})\leq \left(1+ \frac{d_1}N \right)^{-1} \left[B(0) + \frac{d_2}N B_{1} \right]\,.
\end{equation}
Note that the inequality in \eqref{inequality-B} is an entrywise inequality.
\end{remark}

\section{Long time behaviour of the approximate solutions}\label{sec:longtime}

In this section we study the behaviour of $\ssigma(t^n)$ as $n\to +\infty$ (i.e.\ as $t\to +\infty$)
and as $N\to\infty$ ($\DX\to 0$). The main results are listed here below, each item corresponding to a subsection.

\smallskip
\begin{itemize}

\item[{\bf (1)}] Proposition~\ref{prop:5.2} relates the $L^\infty$-norm of $J(\cdot,t^n)$, $\rho(\cdot,t^n)$ as $n\to\infty$ to the evolution of the $\ell_1$--norm 
of the operator $\BB_n$
\begin{equation}\label{def:BB}
\BB_n \dot= \left[ B^{(n)} B^{(n-1)}\cdots  B^{(2)}  B^{(1)}\right]\,,\quad B^{(n)} = B(\cc^n)\,,\qquad n\in \N
\end{equation}
on the eigenspace 
\begin{equation}\label{E_-}
E_-\dot =<e,v_->^\perp\,.
\end{equation}

\item[{\bf (2)}] Lemma~\ref{lem:decomp} concerns a convenient decomposition of the vectors in $E_-$, along which a suitable cancellation occurs later on.

\item[{\bf (3)}] In Theorem~\ref{theo:main}, the exponential formula $\left[B(0)+ {\frac d N}  B_1 \right]^{2N} \in M_{2N}$ is estimated in terms of 
$d$ and $N$,  the difficulty lying in the fact that the matrices $B(0)$ and $B_1$ do not commute. 
The proof relies on a detailed study of the expansion of the power whose coefficients are described by hypergeometric functions, and their sum 
is computed through modified Bessel functions.\\
Thanks to a careful expression of the first order in $1/N$, a cancellation property is identified (see Proposition~\ref{prop:5.8}). As a result,
it is found that the $\||\BB_{2N}\||_1 < 1$ on $E_-$, where
\begin{equation*}
\|| A \||_1  = \max_{j} \sum_{i=1}^n |a_{ij}|\,,\qquad A=(a_{ij})\in M_{n}
\end{equation*}
is the maximum column sum matrix norm, which is induced by the $\ell_1$-norm on $\R^n$.

\item[{\bf (4)}] Finally, in Subsection~\ref{subsec:5.4}, we combine the previous results and prove Theorem~\ref{main-theorem},
starting from the inequality \eqref{inequality-B} which is obtained by a Birkhoff decomposition of the generic matrix $B(\cc)$.
\end{itemize}

\subsection{A first decomposition of the strength vector}\label{subsec:5.1}
We decompose the initial vector $\ssigma(0+)$ as follows:
\begin{equation*}
\ssigma(0+)=\frac{(\ssigma(0+)\cdot e)}{2N}\,e+\frac{(\ssigma(0+)\cdot v_-)}{2N}\,v_-+ \widetilde{\ssigma}(0+),
\end{equation*}
where $e, \,v_-$ are the eigenvectors defined at \eqref{v-pm}  and $\widetilde{\ssigma}(0+) \in {E_-}$.

As a consequence of the boundary conditions $J(1-,t) = J(0+,t)=0$, we get
\begin{align*}
\ssigma(0+)  \cdot e=\sum_{j=1}^{2N} \sigma^{0}_j  = \sum {\Delta J}(x_j,0+) = J(1-,0+) - J(0+,0+)= 0.
\end{align*}
Hence the decomposition of $\ssigma(0+)$ reduces to
\begin{equation}\label{eq:decomp-0}
\ssigma(0+)=\frac{(\ssigma(0+)\cdot v_-)}{2N}\,v_-+ \widetilde{\ssigma}(0+).
\end{equation}
Consider the matrix $\BB_n$ defined at \eqref{def:BB},
obtained by iterating the step \eqref{eq:evol-of-sigma}.  By means of \eqref{eq:decomp-0} and using again \eqref{v-pm} for $v_-$, we get that 
\begin{align}
\ssigma(t^n+) &=\BB_n \ssigma(0+) 
=(-1)^n  \frac{(\ssigma(0+)\cdot v_-)}{2N}\,v_- + \BB_n \widetilde{\ssigma}(0+)\,. \label{eq:ssigma1}
\end{align}

In the following proposition we employ \eqref{eq:ssigma1} to obtain $L^\infty$-bounds on $J=J^\DX$, $\rho=\rho^\DX$.
First, let us define the extended initial data  $\bar J_0:[0,1]\to\R$,
\begin{equation}\label{def:bar_J0}
\bar J_0(x) = \begin{cases}J_0(x) & 0<x<1\\ 0 & x=0 \mbox { or } 1\,.   \end{cases}
\end{equation}
It is clear that $\tv \bar J_0 = \tv \{\bar J_0;[0,1]\} =  |J_0(0+)|+  \tv \{ J_0; (0,1)\} + |J_0(1-)|$.

\begin{proposition}\label{prop:5.2}
For every $t\in(t^n,t^{n+1})$ one has 
\begin{align}\label{bound-on-sup-J}
\| J(\cdot,t)\|_{\infty} &\le \frac{1}{2N} \tv \bar J_0 
+ \|\BB_n  \widetilde{\ssigma}(0+)\|_{\ell^1}     
\\   \label{bound-on-sup-rho}
\| \rho(\cdot,t)\|_{\infty} &\le \frac 2 N (1+C_1\|k\|_{L^1}) \,  \tv \bar J_0   
~+~ 2 (1+2 C_1\|k\|_{L^1})  \|\BB_n  \widetilde{\ssigma}(0+)\|_{\ell^1} ~+~ \frac 1 N \tv \rho_0 \,.
\end{align}
\end{proposition}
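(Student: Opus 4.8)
The plan is to reconstruct $J=J^\DX$ and $\rho=\rho^\DX$ at time $t$ from the list of wave sizes present in the solution, using the boundary condition $J(0+,t)=0$ for $J$ and conservation of mass for $\rho$, and then to plug in the decomposition \eqref{eq:ssigma1},
\begin{equation*}
\ssigma(t^n+)=(-1)^n\,\alpha\,v_-+\BB_n\widetilde\ssigma(0+)\,,\qquad \alpha:=\frac{\ssigma(0+)\cdot v_-}{2N}\,.
\end{equation*}

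I would first treat $J$. Fix $t\in(t^n,t^{n+1})$. Since $J(0+,t)=0$ and $J$ is continuous across the $0$--waves, for every $x$ one has $J(x,t)=\sum_{y_i(t)<x}\sigma_i(t)$, the sum running over the $(\pm1)$--waves to the left of $x$ in increasing spatial order. Splitting this sum along the decomposition above, the part coming from $\BB_n\widetilde\ssigma(0+)$ is bounded in absolute value by $\|\BB_n\widetilde\ssigma(0+)\|_{\ell^1}$, while the ``$v_-$ part'' equals $(-1)^n\alpha\sum_{y_i<x}(v_-)_i$. The point is that, since the components of $\ssigma$ are ordered by space, the entries of $v_-$ that enter these partial sums run through the periodic pattern $(1,-1,-1,1,\dots)$ of \eqref{v-pm} — and, for $t\in(t^{n+1/2},t^{n+1})$, through the $B_1$--permuted pattern $(-1,1,1,-1,\dots)$ with $B_1$ as in \eqref{B1} — both of which have all partial sums in $\{0,\pm1\}$. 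Hence $\|J(\cdot,t)\|_\infty\le|\alpha|+\|\BB_n\widetilde\ssigma(0+)\|_{\ell^1}$, and it remains to check $|\alpha|\le\frac1{2N}\tv\bar J_0$. For this I would compute $\ssigma(0+)\cdot v_-$ explicitly: the two waves born at each interior $x_j$ at $t=0$ carry a common $v_-$--sign $(-1)^j$ and satisfy $\sigma_{2j}+\sigma_{2j+1}=J_0(x_j+)-J_0(x_{j-1}+)$ by \eqref{eq:ssigmav-}, while the boundary waves have sizes $J_0(0+)$ and $-J_0(x_{N-1}+)$ by \eqref{size-at-boundary} and Step~2 of Section~\ref{sec:approximate}; a telescoping then gives $\ssigma(0+)\cdot v_-=2\sum_{j=0}^{N-1}(-1)^jJ_0(x_j+)$. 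Bounding this alternating sum by the two complementary pairings $(a_0-a_1)+(a_2-a_3)+\dots$ and $a_0-(a_1-a_2)-\dots-a_{N-1}$ and adding them yields $2\big|\sum_{j=0}^{N-1}(-1)^jJ_0(x_j+)\big|\le|J_0(0+)|+|J_0(1-)|+\tv\{J_0;(0,1)\}=\tv\bar J_0$, which is exactly $|\alpha|\le\frac1{2N}\tv\bar J_0$, hence \eqref{bound-on-sup-J}.

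For $\rho$ the difference is that $\rho$ also jumps across the $0$--waves, with $\Delta\rho=-2g(J_*(x_j))\delta_j$ there (by \eqref{J*_rho*}) and $\Delta\rho=\pm\sigma_i$ across the $(\pm1)$--waves (by \eqref{def-sizes}). Since the approximate solution solves $\partial_t\rho+\partial_xJ=0$ exactly with $J=0$ at both ends, $m(t):=\int_0^1\rho\,dx$ is constant and equals $\int_0^1\rho_0^\DX\,dx$, so $|m|\le\frac1N\tv\rho_0$ by \eqref{int-rho-DX}. Writing $\rho(x,t)=\rho(0+,t)+\sum_{y_i<x}\Delta\rho_i$ and integrating in $x$ to express $\rho(0+,t)$ through $m$, I obtain $\rho(x,t)=m+\sum_i\Delta\rho_i\big(\mathbf 1_{\{y_i(t)<x\}}-(1-y_i(t))\big)$ with each weight in $[-1,1]$. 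The $0$--wave contribution is then bounded by $\sum_j2|g(J_*(x_j))|\delta_j\le 2C_1\|J(\cdot,t)\|_\infty\sum_j\delta_j\le 2C_1\|k\|_{L^1}\|J(\cdot,t)\|_\infty$, using $g(0)=0$. For the $(\pm1)$--wave part I write $\Delta\rho_i=\epsilon_i\sigma_i$ ($\epsilon_i=\pm1$ the wave type) and insert the decomposition again: the $\BB_n\widetilde\ssigma(0+)$--remainder is $\le\|\BB_n\widetilde\ssigma(0+)\|_{\ell^1}$, while the ``$v_-$ part'' involves $\epsilon_i(v_-)_i$, which runs through the pattern $(1,1,-1,-1,\dots)$ (stable under the $B_1$--permutation together with the sign flip of $\epsilon$); its partial sums lie in $\{0,1,2\}$, and a short computation using that waves born at a common $x_j$ have opposite signs shows $\sum_i\epsilon_i(v_-)_i(1-y_i(t))=1$ for every $t\in(t^n,t^{n+1})$, so this part is again $\le|\alpha|\le\frac1{2N}\tv\bar J_0$. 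Collecting,
\begin{equation*}
\|\rho(\cdot,t)\|_\infty\le\tfrac1N\tv\rho_0+\tfrac1{2N}\tv\bar J_0+\|\BB_n\widetilde\ssigma(0+)\|_{\ell^1}+2C_1\|k\|_{L^1}\|J(\cdot,t)\|_\infty\,,
\end{equation*}
and substituting \eqref{bound-on-sup-J} and enlarging constants gives \eqref{bound-on-sup-rho}.

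The only genuinely delicate step is keeping track of the spatial ordering of the $2N$ waves — including the two boundary waves — and of how it is altered by the transposition $B_1$ at the half--integer times $t^{n+1/2}$; this is what forces the partial-sum bounds for $v_-$ and for $\epsilon\odot v_-$ to be verified for the permuted patterns as well, and it is where the explicit evaluations of $\ssigma(0+)\cdot v_-$ and of $\sum_i\epsilon_i(v_-)_i(1-y_i(t))$ have to be done carefully. Everything else is routine bookkeeping.
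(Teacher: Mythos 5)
Your proposal is correct and follows essentially the same route as the paper: the decomposition \eqref{eq:ssigma1}, the bound $|\ssigma(0+)\cdot v_-|\le \tv\bar J_0$, partial sums of $\ssigma$ against indicator-type vectors with $|v_-\cdot\vv|\le 1$ (resp.\ $\le 2$ for the $\rho$-pattern), $\ell_1$--$\ell_\infty$ duality for the $E_-$ remainder, mass conservation via \eqref{int-rho-DX}, and $|g(J_*)|\le C_1|J_*|$ for the $0$-wave jumps. The only (harmless) deviation is that you eliminate $\rho(0+,t)$ by integrating the representation formula, obtaining weights $\mathbf 1_{\{y_i<x\}}-(1-y_i)\in[-1,1]$, whereas the paper bounds $|\rho(0+,t^n+)|$ by $\sup_x|(A)|+\sup_x|(B)|+\DX\tv\rho_0$ and doubles; your version in fact yields slightly sharper constants, which you then enlarge to match \eqref{bound-on-sup-rho}.
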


\begin{proof} We start by observing that the following inequality holds,
\begin{equation}\label{bound-on-comp-sigma-v}
\left| \ssigma(0+)\cdot v_- \right| \le   \tv \bar J_0 \,.  
\end{equation}
Indeed, by recalling the definition of $v_-$ in \eqref{v-pm}, we observe that
\begin{align*}
\ssigma(0+)\cdot v_- 
&=\, \sigma_1^0 + \sum_{j=1}^{N-1} (-1)^{j} \left(\sigma_{2j}^0 + \sigma_{2j+1}^0  \right) +\sigma_{2N}^0\,.
\end{align*}
Recalling that $\sigma^0_{2j}$, $\sigma^0_{2j+1}$ are the two outgoing waves at $x_j=j\DX$ and time $t=0$, then by \eqref{eq:ssigmav-} it holds
$$
\sigma_{2j}^0 + \sigma_{2j+1}^0 = J(x_j+,0)-J(x_j-,0)\,.  
$$
Moreover, since the approximate solution satisfies the boundary conditions $J=0$, for small $t$ we have 
$$
\sigma_1^0 = J(x_1-,0)-J(0+,t)= J(x_1-,0)= J(0+,0)\,,\qquad \sigma_{2N}^0 = - J(1-,0)\,.
$$
Therefore,
\begin{align}\label{eq:ssigma0v-}
\ssigma(0+)\cdot v_- & =\, J(x_1-,0) +\sum_{j=1}^{N-1}(-1)^j(J(x_j+,0)-J(x_j-,0)) -J(x_{N-1}+,0)
\end{align}
and then, by recalling \eqref{init-data-approx}, we find that
\begin{align*}
\left|\ssigma(0+)\cdot v_-\right|\le\, 
& |J_0(0+)|+  \tv J_0 + |J_0(1-)|
\end{align*}
that gives \eqref{bound-on-comp-sigma-v}.

\smallskip
\textbf{Proof of \eqref{bound-on-sup-J}.}\quad 
Let $y_\ell(t)$ denote the location of a $\pm 1$-wave at time $t$, for $\ell=0,\dots,2N$.
Observe that, for every $x\not = y_\ell$, the value of $J(x,t^n+)$ is expressed by a partial sum of the $\sigma^n_\ell$:
$$
J(x,t^n+) = \underbrace{J(0+,t^n+)}_{=0}   + \sum_{y_\ell < x} \Delta J(y_\ell,t^n+) =   \sum_{y_\ell < x} \sigma^n_\ell = \ssigma(t^n+) \cdot \vv
$$
where 
\begin{equation}\label{eq:vv}
\vv=(v_1,\ldots,v_{2N}) \in \R^{2N}\,, \qquad 
v_\ell =
\begin{cases} 
1 & \mbox{if } y_\ell< x\\
0 &  \mbox{if } y_\ell> x\,.
\end{cases} 
\end{equation}
By \eqref{eq:ssigma1} we obtain
\begin{equation}\label{eq:sigma-dot-vv}
\ssigma (t^n+) \cdot \vv = (-1)^n \frac{1}{2N} (\ssigma(0+)\cdot v_-) (v_- \cdot \vv)  + \BB_n\widetilde{\ssigma}(0+)\cdot \vv\,.
\end{equation}
Recalling the definition of \eqref{v-pm}, observe that $v_- \cdot \vv\in \{\pm1,0\}$ and hence 
\begin{align*}
|J(x,t^n+)| &= |\ssigma (t^n+) \cdot \vv| \\
&\le \frac{1}{2N} |\ssigma(0+)\cdot v_-|   +  |\BB_n\widetilde{\ssigma}(0+) \cdot \vv |\\
&\le \frac{1}{2N}  \tv \bar J_0  +  \|\BB_n\widetilde{\ssigma}(0+)\|_{\ell_1} 
\end{align*}
where \eqref{bound-on-comp-sigma-v} is used and an $\ell_1-\ell_\infty$ estimate is used for $\BB_n\widetilde{\ssigma}(0+) \cdot \vv$.

To complete the proof of \eqref{bound-on-sup-J}, it remains to bound the values of $J$ at  times $t\in (t^n+\DT/2, t^{n+1})$, since it may change due 
to the linear interaction of the waves. Recalling \eqref{B1}, we have
$$
\ssigma (t^{n+1}-) = B_1 \ssigma (t^n+) = (-1)^n \frac{1}{2N} (\ssigma(0+)\cdot v_-) B_1 v_-  + B_1 \BB_n\widetilde{\ssigma}(0+)
$$
with $B_1 v_- = -v_-$. By proceeding as before, we obtain 
\begin{align*}
|J(x,t^{n+1}-)| = | \ssigma(t^{n+1}-)\cdot \vv| 
& \le \frac{1}{2N} \tv \bar J_0  +  \|B_1\BB_n\widetilde{\ssigma}(0+)\|_{\ell_1}\\
&\le \frac{1}{2N}  \tv \bar J_0  +  \|\BB_n\widetilde{\ssigma}(0+)\|_{\ell_1}\,,
\end{align*}
where it is used that multiplication by $B_1$ leaves unaltered the $\ell_1$ norm (being a permutation matrix).
Therefore, \eqref{bound-on-sup-J}  is completely proved. 

\smallskip
\textbf{Proof of \eqref{bound-on-sup-rho}.}\quad 
For $x\not=x_j=j\DX$ and $x\not = y_\ell$, we have
\begin{align}\nonumber
\rho(x,t^n+) &= \rho(0+,t^n+)   + \sum_{y_\ell < x} \Delta \rho(y_\ell,t^n+) + \sum_{x_j < x} \Delta \rho(x_j,t^n+)
\end{align}
Recalling \eqref{int-rho-DX}, we have
$$
\left|\int_0^1 \rho(x,t^n+)\,dx \right| = \left|\int_0^1 \rho(x,0)\,dx\right| \le \DX \tv \rho_0\,,
$$
then
\begin{align*}
|\rho(0+,t^n+) | &\le \left| \int_0^1 [\rho(0+,t^n+) - \rho(x,t^n+)] \,dx\right| ~+~ \DX \tv \rho_0\\
& \le \sup_x \left|  \sum_{y_\ell < x} \Delta \rho(y_\ell,t^n+)\right| + \sup_x \left|  \sum_{x_j < x} \Delta \rho(x_j,t^n+)   \right|  ~+~ \DX \tv \rho_0
\end{align*}
and hence
\begin{align*}
|\rho(x,t^n+) | & \le 2 \sup_x   \underbrace{\left|\sum_{y_\ell < x} \Delta \rho(y_\ell,t^n+) \right|}_{(A)} + 2 \sup_x 
\underbrace{\left| \sum_{x_j < x} \Delta \rho(x_j,t^n+)\right|}_{(B)}   
~+~ \DX \tv \rho_0\,.
\end{align*}

\paragraph{$\bullet$ Estimate on (A).}\quad Recalling that $\Delta \rho(y_\ell) = \pm \sigma_{\pm1}$, we proceed similarly to 
\eqref{eq:sigma-dot-vv}:
\begin{align*}
\sum_{y_\ell < x} (\pm \sigma_{\pm1}) & = \ssigma(t^n+)\cdot \widetilde \vv \\
& = (-1)^n \frac{1}{2N} (\ssigma(0+)\cdot v_-) (v_- \cdot  \widetilde\vv)  + \BB_n\widetilde{\ssigma}(0+)\cdot \widetilde \vv
\end{align*}
where $\widetilde \vv = (v_1,\ldots,v_{2N}) \in \R^{2N}$, 
$$
v_\ell =
\begin{cases} 
1 & \mbox{if } y_\ell< x  \mbox{ and }\ell \mbox{ odd} \\
-1 & \mbox{if } y_\ell< x  \mbox{ and }\ell \mbox{ even} \\
0 &  \mbox{if } y_\ell> x\,.
\end{cases} 
$$
Hence $|v_- \cdot  \widetilde\vv|\le 2$ and then, by using \eqref{bound-on-comp-sigma-v}, we get:
\begin{align*}
|(A)| = |\sum_{y_\ell < x} (\pm \sigma_{\pm1})| &\le \frac{1}{N} |\ssigma(0+)\cdot v_-|  +  \|\BB_n\widetilde{\ssigma}(0+)\|_{\ell_1}\\
& \le \frac{1}{N} \tv \bar J_0   +  \|\BB_n\widetilde{\ssigma}(0+)\|_{\ell_1} \,.
\end{align*}

\paragraph{$\bullet$ Estimate on (B).}\quad Recalling that $\Delta \rho(x_j) = -2 g(J(x_j))\delta_j$, we have
\begin{align*}
(B) = 2 \left| \sum_{x_j < x} g(J(x_j,t^n+)) \delta_j \right| 
&\le 2 C_1  \max_j |J(x_j,t^n+)| \cdot \left(\sum_{j=1}^{N-1}  \delta_j \right)\\
&\le 2 C_1 \|k\|_{L^1}  \left(     \frac{1}{2N}  \tv \bar J_0  +  \|\BB_n\widetilde{\ssigma}(0+)\|_{\ell_1}  \right)\,.
\end{align*}
In conclusion, for every $x\in(0,1)$ we find that
\begin{align*}
\left|\rho(x,t^n+) \right| \le &~ 2 \DX \left(1 + C_1 \|k\|_{L^1}\right)  \tv \bar J_0  \\
& ~+ 2 \left(1 + 2 C_1 \|k\|_{L^1}\right)  \|\BB_n\widetilde{\ssigma}(0+)\|_{\ell_1} ~+~ \DX \tv \rho_0
\end{align*}
which is \eqref{bound-on-sup-rho} for $t\in (t^n, t^n+\DT/2)$. The estimate for $t\in (t^n+\DT/2,t^{n+1})$ is done similarly as the one for $J$.
\end{proof}

\begin{remark}\label{rem:TV-no-decay} (\emph{On the total variation of $J$}).
We remark that the total variation of $J_\DX$, being
$$
\tv J_\DX(\cdot,t)=\|\ssigma(t)\|_{\ell_1}\,,
$$
does not necessarily vanish at $t\to\infty$. Indeed, from \eqref{eq:ssigma1} it follows that
\begin{align*}
\|\ssigma(t^n+)\|_{\ell_1} 
&\ge \frac1{2N}  |\ssigma(0+)\cdot v_- |\left\|v_-\right\|_{\ell_1}-\left\|\BB_n  \widetilde{\ssigma}(0+)\right\|_{\ell_1}\\
&=  |\ssigma(0+)\cdot v_- | -\left\| \BB_n  \widetilde{\ssigma}(0+)\right\|_{\ell_1}
\end{align*}
where it is used that $\left\|v_-\right\|_{\ell_1}=2N$ (see the definition of $v_-$ at \eqref{v-pm}). 
By means of \eqref{eq:ssigma0v-}, and using the notation
$$
J_\ell=J(x_{\ell-1}+,0)=J(x_{\ell}-,0)=J_0(x_{\ell-1}+) \qquad \qquad \ell=1,\dots,N
$$
we have
\begin{align*}
\left|\ssigma(0+)\cdot v_-\right| &=\left|J_1-J_N +\sum_{\ell=1}^{N-1}(-1)^{\ell}(J_{\ell+1}-J_\ell)\right| \\
 &=2\left|J_1-J_N + \sum_{\ell=2}^{N-1}(-1)^{\ell-1}J_\ell\right|  =2\left|\sum_{\ell=1}^{N/2}(J_{2\ell-1}-J_{2\ell})\right|.
\end{align*}
If the initial datum $J_0(x)$ is strictly monotone, then
$$
\left|\ssigma(0+)\cdot v_-\right| = 2 \left|J_N - J_1\right|~ \to ~ 2   \left|J_0(1-) - J_0(0+) \right| = 2 \tv J_0 >0\,,  \quad N\to\infty\,.
$$
About the second term in the sum, when $\cc$ is constant in time we have $\BB_n = B(\cc)^n$ and
$$
\| B^n  \widetilde{\ssigma}(0+)\|_{\ell_1} \to 0 \qquad \text{as $n\to +\infty$}
$$
since $\widetilde{\ssigma}(0+)$ belongs to the subspace $E_- =<e,v_->^\perp$ corresponding to the eigenvalues with modulus $<1$. 
Therefore $\tv J(\cdot,t)$ does not tend to zero as $t\to +\infty$ for $J_0$ strictly monotone, and the limit is uniformly positive as $\DX=1/N\to 0$.

However, in \eqref{L-infty-decay}, it will turn out that the $L^\infty$-norm of $J$ is of order $\DX$ for large $t$.
\end{remark}

\subsection{A refined decomposition of the strength vector}

In this subsection we focus on the analysis of  $\|\BB_n\widetilde{\ssigma}(0+)\|_{\ell_1}$. In particular we analyze the sequence 
$\{\BB_n\widetilde{\ssigma}\}_{n\in\N}$ whenever $\widetilde{\ssigma}$ belongs to the subspace $E_- =<e,v_->^\perp$.

\smallskip
Let $N\in 2\N$ and consider $\widetilde{\ssigma}\in E_-$. By definition \eqref{v-pm} of $e$, $v_-$
then $\widetilde{\ssigma}$ satisfies
$$
\begin{cases}
\widetilde{\sigma}_1+\widetilde{\sigma}_2+\dots+\widetilde{\sigma}_{2N}=0,\\
\widetilde{\sigma}_1-\widetilde{\sigma}_2-\widetilde{\sigma}_3+\widetilde{\sigma}_4+\widetilde{\sigma}_5-\dots +\widetilde{\sigma}_{2N}=0,
\end{cases}
$$
which is equivalent to
$$
\begin{cases}
\widetilde{\sigma}_1+\widetilde{\sigma}_4+\dots +\widetilde{\sigma}_{2N-3}+\widetilde{\sigma}_{2N}=0,\\
\widetilde{\sigma}_2+\widetilde{\sigma}_3+\dots+\widetilde{\sigma}_{2N-2}+\widetilde{\sigma}_{2N-1}=0.
\end{cases}
$$
We introduce the following subspaces in $\R^{2N}$, each of dimension $N-1$: 
\begin{align*}
H_1\doteq \{(x_1,\dots,x_{2N})\in\R^{2N}&: \quad x_1+x_4+\dots+x_{2N-3}+x_{2N}=0\},\\
H_2\doteq \{(x_1,\dots,x_{2N})\in\R^{2N}&: \quad x_2+x_3+\dots+x_{2N-2}+x_{2N-1}=0\}\,.
\end{align*}
Hence we can write
\begin{equation}\label{def:decomp-ssigma} 
\widetilde{\ssigma}=\widetilde{\ssigma}' +\widetilde{\ssigma}'' \,, \qquad \widetilde{\ssigma}'\in H_1\,,\quad \widetilde{\ssigma}''\in H_2\,.
\end{equation}
Notice that, since $H_1$ and $H_2$ are complementary, we have
\begin{equation}\label{sum-ell1}
\left\|\widetilde{\ssigma}\right\|_{\ell_1}= 
\left\|\widetilde{\ssigma}'+\widetilde{\ssigma}''\right\|_{\ell_1}=
\left\|\widetilde{\ssigma}'\right\|_{\ell^1}+\left\|\widetilde{\ssigma}''\right\|_{\ell_1}\,.
\end{equation}
For later use, we define the following sets of indices
\begin{equation}\label{def:subsets}
\mathcal{I}'\doteq\{1,4,5,8,\dots,2N-3,2N\}\,,\qquad 
\mathcal{I}''\doteq \{2,3,6,7\dots,2N-2,2N-1\}\,.
\end{equation}

Let us define the vectors $\vvv_{ij}\in \R^{2N}$ for $i,j$ either $\in \mathcal{I}'$ or $\in \mathcal{I}''$ as follows,
\begin{equation}\label{eq:vij}
(\vvv_{ij})_i=1 \qquad \qquad (\vvv_{ij})_j=-1 \qquad \qquad (\vvv_{ij})_k=0 \quad \forall\, k\ne i,j\,.
\end{equation}
Remark that $\widetilde{\ssigma}'$ and $\widetilde{\ssigma}''$ can be written as a linear combination of suitable $\vvv_{ij}$'s, 
i.e.\ we can identify $\beta_{ij}',\beta_{ij}''\in \R$ such that
\begin{equation}\label{sigma-decomp}
\widetilde{\ssigma}'=\sum_{i,j\in \mathcal{I}'}\beta_{ij}'\vvv_{ij}\,, \qquad \widetilde{\ssigma}''=\sum_{i,j\in \mathcal{I}''}\beta_{ij}''\vvv_{ij}\,.
\end{equation}
By the triangular inequality, one has that
\begin{equation*}
\left\|\widetilde{\ssigma}'\right\|_{\ell_1}\le \sum_{ij}|\beta_{ij}'|  \left\|\vvv_{ij}\right\|_{\ell_1} = 2 \sum_{ij}|\beta_{ij}'|\,,
\qquad \left\|\widetilde{\ssigma}''\right\|_{\ell_1}\le  2 \sum_{ij}|\beta_{ij}''|\,.
\end{equation*}
In the next Lemma we prove that, for a suitable choice of the decomposition, the sum above can be made an equality.

\begin{lemma}\label{lem:decomp}
	\begin{itemize}
		\item[(i)] There exists a choice of the vectors $\vvv_{ij}$ such that \eqref{sigma-decomp} holds together with
\begin{align}
{\left\|\widetilde{\ssigma}'\right\|_{\ell_1}} & =2  \sum_{ij}|\beta_{ij}'|,\label{eq:betaij'}\\[2pt]
 {\left\|\widetilde{\ssigma}''\right\|_{\ell_1}}&=2 \sum_{ij}|\beta_{ij}''|.\label{eq:betaij''}
\end{align}

	\item[(ii)] The following estimate holds,
	\begin{equation}\label{eq:decomp-final}
\bigl\|\BB_n\widetilde{\ssigma}\bigr\|_{\ell_1}
\le \sup_{i,j}  \left\| \BB_n \frac{\vvv_{ij}}{\|\vvv_{ij}\|_{\ell_1}}\right\|_{\ell_1}   \cdot \bigl\|\widetilde\ssigma\bigr\|_{\ell_1},\qquad\quad \forall \widetilde\ssigma \in {E_-}.
	\end{equation}
	
\end{itemize}
\end{lemma}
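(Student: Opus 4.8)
The plan is to establish (i) by reducing it to the solvability of a balanced transportation problem, and then to obtain (ii) from (i) by a direct triangle-inequality computation.

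For (i), the first observation is that the index sets $\mathcal{I}'$ and $\mathcal{I}''$ of \eqref{def:subsets} form a partition of $\{1,\dots,2N\}$, and that for $\widetilde{\ssigma}\in E_-$ the two equivalent systems displayed just before the definition of $H_1,H_2$ state exactly that $\sum_{i\in\mathcal{I}'}\widetilde\sigma_i=0$ and $\sum_{j\in\mathcal{I}''}\widetilde\sigma_j=0$. I would therefore take $\widetilde{\ssigma}'$ to be the vector that coincides with $\widetilde{\ssigma}$ on the coordinates indexed by $\mathcal{I}'$ and vanishes elsewhere, and $\widetilde{\ssigma}''$ the vector that coincides with $\widetilde{\ssigma}$ on $\mathcal{I}''$ and vanishes elsewhere. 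Then $\widetilde{\ssigma}'\in H_1$, $\widetilde{\ssigma}''\in H_2$, $\widetilde{\ssigma}=\widetilde{\ssigma}'+\widetilde{\ssigma}''$, and since these two vectors have disjoint supports one gets $\|\widetilde{\ssigma}\|_{\ell_1}=\|\widetilde{\ssigma}'\|_{\ell_1}+\|\widetilde{\ssigma}''\|_{\ell_1}$, which is \eqref{sum-ell1}.

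It then remains to decompose a zero-sum vector supported on $\mathcal{I}'$ (and, symmetrically, on $\mathcal{I}''$) as a nonnegative combination of the elementary differences $\vvv_{ij}$ so that the $\ell_1$ norm splits exactly. For $\widetilde{\ssigma}'\ne 0$, put $P=\{i\in\mathcal{I}':\widetilde\sigma'_i>0\}$, $M=\{j\in\mathcal{I}':\widetilde\sigma'_j<0\}$ and $S=\sum_{i\in P}\widetilde\sigma'_i=-\sum_{j\in M}\widetilde\sigma'_j=\frac12\|\widetilde{\ssigma}'\|_{\ell_1}$; I would then choose nonnegative weights $\beta'_{ij}$, $i\in P$, $j\in M$, with $\sum_{j}\beta'_{ij}=\widetilde\sigma'_i$ and $\sum_{i}\beta'_{ij}=-\widetilde\sigma'_j$. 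Such weights exist since this is a balanced transportation problem (total supply $=$ total demand $=S$); concretely they can be built by the greedy procedure of repeatedly selecting $i\in P$, $j\in M$ and transferring $\min\{\widetilde\sigma'_i,-\widetilde\sigma'_j\}$ from $i$ to $j$, which zeroes at least one remaining coordinate per step and hence terminates. Taking $\vvv_{ij}$, for these pairs, to be the vector with $+1$ at $i$ and $-1$ at $j$, we obtain $\widetilde{\ssigma}'=\sum_{i\in P,\,j\in M}\beta'_{ij}\vvv_{ij}$ with $\sum_{ij}|\beta'_{ij}|=\sum_{ij}\beta'_{ij}=S=\frac12\|\widetilde{\ssigma}'\|_{\ell_1}$, i.e.\ \eqref{eq:betaij'}; the argument for $\widetilde{\ssigma}''$ is identical and yields \eqref{eq:betaij''}. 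Note that all the $\vvv_{ij}$ thus produced have both indices in $\mathcal{I}'$ (resp.\ $\mathcal{I}''$), as the definition \eqref{eq:vij} requires.

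For (ii), I would combine $\widetilde{\ssigma}=\widetilde{\ssigma}'+\widetilde{\ssigma}''$, the triangle inequality, linearity of $\BB_n$, the decompositions \eqref{sigma-decomp}, and $\|\vvv_{ij}\|_{\ell_1}=2$ to write
\begin{align*}
\|\BB_n\widetilde{\ssigma}\|_{\ell_1}
&\le \|\BB_n\widetilde{\ssigma}'\|_{\ell_1}+\|\BB_n\widetilde{\ssigma}''\|_{\ell_1}
\le \sum_{ij}|\beta'_{ij}|\,\|\BB_n\vvv_{ij}\|_{\ell_1}+\sum_{ij}|\beta''_{ij}|\,\|\BB_n\vvv_{ij}\|_{\ell_1}\\
&\le \sup_{i,j}\left\|\BB_n\frac{\vvv_{ij}}{\|\vvv_{ij}\|_{\ell_1}}\right\|_{\ell_1}\cdot 2\Bigl(\sum_{ij}|\beta'_{ij}|+\sum_{ij}|\beta''_{ij}|\Bigr),
\end{align*}
and then \eqref{eq:betaij'}, \eqref{eq:betaij''} together with \eqref{sum-ell1} turn the final factor into $\|\widetilde{\ssigma}'\|_{\ell_1}+\|\widetilde{\ssigma}''\|_{\ell_1}=\|\widetilde{\ssigma}\|_{\ell_1}$, giving \eqref{eq:decomp-final}. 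The only genuinely non-routine point is the combinatorial step in (i): producing a decomposition whose coefficients are sign-coherent so that passing to absolute values in (ii) loses nothing; everything else is bookkeeping and standard norm estimates.
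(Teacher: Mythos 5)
Your proof is correct and takes essentially the same approach as the paper: part (ii) is the identical triangle-inequality computation, and your balanced-transportation construction in part (i) --- pairing positive with negative entries of each zero-sum half of $\widetilde{\ssigma}$ and greedily transferring the minimum, which zeroes one coordinate per step and keeps all coefficients sign-coherent --- is the same greedy cancellation algorithm the paper uses, merely rephrased in transportation language. No gaps.
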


\begin{proof}
We start with (i), it suffices to prove \eqref{eq:betaij'}, since \eqref{eq:betaij''} is analogous.

First, we have to find a suitable linear decomposition of $\widetilde{\ssigma}'(0+)$ in a basis of vectors of the form $\vvv_{ij}$, 
with $i,j\in \mathcal{I}'$. By construction we have
$$
\widetilde{\ssigma}'=\left(\widetilde{\sigma}'_1,0,0,\widetilde{\sigma}'_4,\widetilde{\sigma}'_5,0\dots,0,
\widetilde{\sigma}'_{2N-3},0,0,\widetilde{\sigma}'_{2N}\right),
$$
i.e.\ the components corresponding to indices in $\mathcal{I}''$ are zero. 
Therefore, we can simplify the notation and in place of $\widetilde{\ssigma}'$ consider 
$$
\mathbf{x}=\left(x_1,x_2,\dots,x_N\right)=\left(\widetilde{\sigma}'_1,\widetilde{\sigma}'_4,\dots,\widetilde{\sigma}'_{2N}\right)\in\R^N,
$$
the vector obtained erasing from $\widetilde{\ssigma}'$ the zero components and satisfying $x_1+x_2+\dots+x_N=0$.
Below we describe an algorithm to decompose $\textbf{x}$ along a basis of $\vvv_{ij}$'s, for $i,j\in\mathcal{I}'$. 

\medskip
\textbf{Step 1}. Let $\mathbf{x}\not=0$. Hence there exists a pair of indices $k_1,h_1\in\{1,\dots,N\}$ such that
$$
x_{k_1}\cdot x_{h_1}<0\,,\qquad 0<|x_{k_1}|=\min _{k=1,\dots,N;\, x_k\ne 0}|x_k|\,.
$$
In particular one has that $|x_{h_1}|\ge |x_{k_1}|$.

\medskip
\textbf{Step 2}.
Define the vector
$$
\textbf{x}^{(1)}\doteq \textbf{x}- x_{k_1} \vvv_{k_1 h_1} \in \R^N,
$$
and notice that it satisfies 
$$ 
\bigl(\textbf{x}^{(1)}\bigr)_{k} =
\begin{cases} 
0&  k=k_1\\
x_{h_1}+x_{k_1}  & k=h_1\\
x_{k} & k\not = k_1,\, h_1\,.
\end{cases}
$$
In particular, 
$$
 \bigl|\bigl(\textbf{x}^{(1)}\bigr)_{h_1}\bigr|=|x_{h_1}|-|x_{k_1}| \ge 0 
$$
and hence 
$$
\bigl\| \textbf{x}^{(1)}\bigr\|_{\ell_1}=\bigl\|\textbf{x}\bigr\|_{\ell_1}-2|x_{k_1}|<\bigl\|\textbf{x}\bigr\|_{\ell_1}.
$$

\medskip
\textbf{Step 3}.
We apply the same procedure to $\textbf{x}^{(1)}$, namely we choose suitable indexes $k_2,h_2\in\{1,\dots,N\}$ such that 
\begin{gather*}
\bigl(\textbf{x}^{(1)}\bigr)_{k_2}\cdot \bigl(\textbf{x}^{(1)}\bigr)_{h_2} <0\,,\qquad
0<\bigl|\bigl(\textbf{x}^{(1)}\bigr)_{k_2}\bigr|= \min_{k=1,\dots,N
\,, \bigl(\textbf{x}^{(1)}\bigr)_{k}\ne 0}|\bigl(\textbf{x}^{(1)}\bigr)_{k}|\,.
\end{gather*}
Notice that, since $\bigl(\textbf{x}^{(1)}\bigr)_{k_1}=0$, one has that $k_2$, $h_2$ are different from $k_1$.
Moreover one has
$\bigl|\bigl(\textbf{x}^{(1)}\bigr)_{h_2}\bigr|\ge \bigl|\bigl(\textbf{x}^{(1)}\bigr)_{k_2}\bigr|$. 

\smallskip
As in Step \textbf{2}, we define
\begin{align*}
\textbf{x}^{(2)}&\doteq \textbf{x}^{(1)}- \bigl(\textbf{x}^{(1)}\bigr)_{k_2} \vvv_{k_2 h_2}\\
& = \textbf{x}- x_{k_1} \vvv_{k_1 h_1}-  \bigl(\textbf{x}^{(1)}\bigr)_{k_2}\vvv_{k_2 h_2},
\end{align*}
that is
$$ 
\bigl(\textbf{x}^{(2)}\bigr)_{k}
=
\begin{cases} 
0&  k=k_2\\
\bigl(\textbf{x}^{(1)}\bigr)_{h_2}+ \bigl(\textbf{x}^{(1)}\bigr)_{k_2}
& k=h_2\\
\bigl(\textbf{x}^{(1)}\bigr)_{k} & k\not = k_2,\, h_2\,.
\end{cases}
$$
Notice that
$$
\bigl(\textbf{x}^{(2)}\bigr)_{k} =0 \qquad \mbox{ for } k=k_1,\ k_2
$$
and that
$$
 \bigl|\bigl(\textbf{x}^{(2)}\bigr)_{h_2}\bigr|=|\bigl(\textbf{x}^{(1)}\bigr)_{h_2}|-|\bigl(\textbf{x}^{(1)}\bigr)_{k_2} |  \ge 0 \,.
$$
Observe that $|x_{k_1}|+\bigl| \bigl(\textbf{x}^{(1)}\bigr)_{k_2}\bigr| \le |x_{k_1}|+|x_{k_2}|$ and
\begin{align*}
\bigl\| \textbf{x}^{(2)}\bigr\|_{\ell_1} &= \bigl\| \textbf{x}^{(1)}\bigr\|_{\ell_1} - 2 |\bigl(\textbf{x}^{(1)}\bigr)_{k_2} | \\
&=\bigl\|\textbf{x}\bigr\|_{\ell_1}-2 \left(|x_{k_1}|+\bigl| \bigl(\textbf{x}^{(1)}\bigr)_{k_2}\bigr|\right)\,. 
\end{align*}

\medskip
\textbf{Step 4}.
Proceeding by induction, after at most $N-1$ iterations of the method we get
$$
\textbf{x}^{(N-1)}\doteq \textbf{x}- x_{k_1} \vvv_{k_1 h_1} - \bigl(\textbf{x}^{(1)}\bigr)_{k_2} \vvv_{k_2 h_2} - \dots - 
\bigl(\textbf{x}^{(N-2)}\bigr)_{k_{N-1}} \vvv_{k_{N-1} h_{N-1}}= (0,\dots,0)\in\R^N.
$$
Thus, 
\begin{equation}\label{eq:texbfx}
0=\bigl\|\textbf{x}^{(N-1)}\bigr\|_{\ell_1}=\bigl\|\textbf{x}\bigr\|_{\ell_1}- 2\left(\sum_{i=1}^{N-1}|x_{k_i}|\right).
\end{equation}
and hence
$$
\bigl\|\textbf{x}\bigr\|_{\ell_1} = \frac 12 \, {\sum_{i=1}^{N-1}|x_{k_i}|}\,.
$$
Since we can write that $\sum_{i=1}^{N-1}|x_{k_i}|= \sum_{ij}|\beta_{ij}'|$,
then the proof of \eqref{eq:betaij'} is complete.

 \paragraph{Proof of (ii).} By using \eqref{def:decomp-ssigma}, we have
$$
\BB_n \widetilde{\ssigma} = \BB_n \widetilde{\ssigma}' + \BB_n\widetilde{\ssigma}''\,.
$$
By means of  \eqref{sigma-decomp} and (i) we find that
\begin{align*}
\bigl\|  \BB_n\widetilde{\ssigma}\bigr\|_{\ell_1}&
\le  \bigl\|  \BB_n\widetilde{\ssigma}'\bigr\|_{\ell_1} 
+ \bigl\| \BB_n  \widetilde{\ssigma}''\bigr\|_{\ell_1}\\
&\le   \sum_{\mathcal{I}'}|\beta_{ij}'| \bigl\|  \BB_n \vvv_{ij}\bigr\|_{\ell_1}
+ \sum_{\mathcal{I}''}|\beta_{ij}''|\bigl\| \BB_n \vvv_{ij}\bigr\|_{\ell_1}
\\
&\le \left(\sum_{\mathcal{I}'}|\beta_{ij}'| + \sum_{\mathcal{I}''}|\beta_{ij}''|\right) 
\sup_{i,j} \bigl\| \BB_n \vvv_{ij}\bigr\|_{\ell_1} 
\\
&\le\frac 12  \left({\bigl\|\widetilde{\ssigma}'\bigr\|_{\ell_1}}+ {\bigl\|\widetilde{\ssigma}''\bigr\|_{\ell_1}}\right)
\sup_{i,j} \bigl\| \BB_n \vvv_{ij}\bigr\|_{\ell_1}\,. 
\end{align*}
As $\|\vvv_{ij}\|_{\ell_1} =2$ and by using \eqref{sum-ell1}, the proof of \eqref{eq:decomp-final} is complete.
\end{proof}
Thanks to Lemma~\eqref{lem:decomp}, especially \eqref{eq:decomp-final}, it is then sufficient to study the behaviour of 
$$
\sup_{i,j}  \left\| \BB_n \frac{\vvv_{ij}}{\|\vvv_{ij}\|_{\ell_1}}\right\|_{\ell_1}
$$
as $n\to\infty$ for every $\vvv_{ij}$, as defined in \eqref{eq:vij}, with either $i,j\in \mathcal{I}'$ or $i,j\in \mathcal{I}''$.
The goal is to prove that the above quantity decays exponentially fast as $n\to\infty$, uniformly for large $N$.



\subsection{Linear damping}\label{Subsect_5.2}
In this subsection we consider the special case when $\cc$ is constant in space and time (which is the case if $k$ is constant and $g$ is linear) 
and hence $B(\cc)$ does not depend on time. This means that the product of the matrices in \eqref{def:BB} reduces to the 
$n^{th}$ power of $B(\cc)$. In particular we focus on the structure of the power for $n=2N,$ since we can exploit the fact that the permutation
$B(0)^{2N}$ is the identity. 

We remark that all the quantities in this subsection do not depend on the initial data; they depend only on the coefficients of the 
system~\eqref{DWE-rho-J-IBVP}.

\smallskip
Assume that 
$$
k(x) = \bar k>0 \quad \forall\, x\in(0,1)\,,\qquad g'(J) = const. =  C_1 
$$
and set
\begin{equation}\label{eq:gamma=dN}
d  ~\dot= ~ \bar k\, C_1 \,,\qquad  \gamma ~\dot =~ \frac{d}{N}\,.
\end{equation}
By Proposition~\ref{prop:4.3} and Birkhoff Theorem, the matrix $B(\cc)$ can be written as
\begin{align*}
B(\cc) =(1-c) B(0) + c B_1  = (1-c) \left[ B(0) + \frac{c}{1-c} B_1\right] \,,
\end{align*}
where $\cc= c(1,1,\dots,1)\in \R^{N-1}$ and 
$$
c=  \frac{\gamma}{\gamma+1} 
\,,  \qquad   \frac{c}{1-c} = \gamma = \frac {d}N\,. 
$$
Hence 
\begin{align} 
B(\cc)^{2N} & =(1-c)^{2N}\left[B(0)+ \gamma B_1 \right]^{2N}  \label{eq:expansion}
\end{align}
It is clear that 
\begin{equation*}
(1-c)^{2N} = \left(1+ \frac {d}N \right)^{-2N} \to \ee^{-2d}\,,\quad N\to\infty\,.
\end{equation*}
Let us focus on the second factor in \eqref{eq:expansion}, that is
\begin{equation}\label{eq:expansion-2}
\left[B(0)+ \gamma B_1 \right]^{2N}  = \sum_{k=0}^{2N} \gamma^k  S_k(B(0),B_1), 
\end{equation}
where each term $S_k(B(0),B_1)$ is the sum of all products of $2N$ matrices which are either $B_1$ or $B(0)$,
and in which $B_1$ appears exactly $k$ times, that is
\begin{equation}\label{def:S_k}  
\left\{
\begin{aligned}
S_k(B(0),B_1)= & \sum_{(\ell_1,\ldots,\ell_{k+1})} 
B(0)^{\ell_1} \cdot B_1 \cdot B(0)^{\ell_2} \cdot B_1 \cdots B(0)^{\ell_k} \cdot B_1 \cdot B(0)^{\ell_{k+1}}\\
&0\le \ell_j\le2N-k\,,\qquad   \sum_{j=1}^{k+1} \ell_j = 2N-k\,.
\end{aligned}\right.
\end{equation}
In what follows we use extensively the fact that
$B_1^2=I_{2N}=B(0)^{2N}$ and the commutation property described in next proposition. 

\begin{proposition} 
The following identity holds for any $\ell\in\N$:
\begin{equation}\label{commut-rule}
B(0)^{\pm\ell} B_1 = B_1  B(0)^{\mp\ell}.
\end{equation}
\end{proposition}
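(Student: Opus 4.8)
The plan is to exploit the factorization $B(0) = B_2(0)B_1$ coming from \eqref{B-of-cc}, together with the fact that \emph{both} factors are symmetric permutation matrices, hence involutions. Recall from \eqref{B1} that $B_1$ is the permutation matrix transposing each pair of consecutive indices $(2i-1,2i)$, $i=1,\dots,N$; it is symmetric and $B_1^{2}=I_{2N}$. Setting $\cc=0$ in \eqref{B2}, every $2\times2$ diagonal block $\bigl(\begin{smallmatrix}c_j&1-c_j\\ 1-c_j&c_j\end{smallmatrix}\bigr)$ collapses to the transposition $\bigl(\begin{smallmatrix}0&1\\1&0\end{smallmatrix}\bigr)$, so that $B_2(0)$ is the permutation matrix transposing the pairs $(2j,2j+1)$, $j=1,\dots,N-1$, while fixing the indices $1$ and $2N$. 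In particular $B_2(0)$ is symmetric, hence $B_2(0)^{2}=I_{2N}$.

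First I would establish the base case $\ell=1$. Since $B(0)=B_2(0)B_1$ and $B_1^{-1}=B_1$, multiplying on the right by $B_1$ gives $B(0)B_1=B_2(0)$. On the other hand $B(0)^{-1}=B_1^{-1}B_2(0)^{-1}=B_1B_2(0)$, using $B_2(0)^{-1}=B_2(0)$, so multiplying on the left by $B_1$ and invoking $B_1^{2}=I_{2N}$ yields $B_1B(0)^{-1}=B_2(0)$ as well. Hence
\begin{equation*}
B(0)\,B_1 \;=\; B_1\,B(0)^{-1}.
\end{equation*}

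The general identity then follows by a one-line induction: assuming $B(0)^{\ell}B_1=B_1B(0)^{-\ell}$, one has $B(0)^{\ell+1}B_1=B(0)^{\ell}\bigl(B(0)B_1\bigr)=B(0)^{\ell}B_1B(0)^{-1}=B_1B(0)^{-\ell}B(0)^{-1}=B_1B(0)^{-(\ell+1)}$, so $B(0)^{\ell}B_1=B_1B(0)^{-\ell}$ for every $\ell\in\N$. Multiplying this relation by $B_1$ on both the left and the right and using $B_1^{2}=I_{2N}$ gives the companion identity $B(0)^{-\ell}B_1=B_1B(0)^{\ell}$, which together cover the two signs in \eqref{commut-rule}. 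Conceptually, the computation simply records that conjugation by $B_1$ inverts the cyclic permutation $B(0)$ (a dihedral relation), so there is no genuine obstacle here; the only point that requires care is reading off the permutation structure of $B_2(0)$ from \eqref{B2} correctly when $\cc=0$.
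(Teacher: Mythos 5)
Your proof is correct and rests on the same ingredients as the paper's: the factorization $B(0)=B_2(0)B_1$ together with the fact that $B_1$ and $B_2(0)$ are symmetric permutation matrices, hence involutions. The only cosmetic difference is that you obtain the positive-exponent case by induction from the dihedral relation $B(0)B_1=B_1B(0)^{-1}$ and then conjugate by $B_1$ for the negative one, whereas the paper telescopes $(B_1B_2(0))^{\ell}B_1$ directly for the negative case and invokes $B(0)^{2N}=I_{2N}$ for the positive one; both routes are equally valid.
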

\begin{proof} Recalling \eqref{B1}--\eqref{B-of-cc}, we have that $B(0)^{-1}= (B_2(0) B_1)^{-1} = B_1 B_2(0)$. 
Then for every $\ell\ge 0$ we have
\begin{align*}
B(0)^{-\ell} B_1 &= \underbrace{(B_1 B_2(0))\cdots (B_1 B_2(0))}_{\ell\ \mbox{times}}\cdot B_1 \\
& = B_1\cdot  \underbrace{(B_2(0) B_1)\cdots (B_2(0) B_1)}_{\ell\ \mbox{times}}\\
&= B_1  \cdot B(0)^{\ell}\,.
\end{align*}
As for the identity for $+\ell$, notice that
\begin{align*}
B(0)^{\ell} B_1 &= B(0)^{2N - (2N-\ell)} B_1 = B(0)^{2N} B(0)^{-(2N-\ell)} B_1 \\
&= B(0)^{-(2N-\ell)} B_1,
\end{align*}
where we used that $B(0)^{2N} = I_{2N}$. 
Hence, by the first identity we get 
$$
B(0)^{\ell} B_1 = B_1  \cdot B(0)^{2N-\ell} = B_1  \cdot B(0)^{-\ell}\,.   
$$
\end{proof}
By means of \eqref{commut-rule} and using that $B_1^2=I_{2N}$, the generic term in the sum $S_k$ in \eqref{def:S_k} 
can be conveniently rewritten. Indeed, one has $S_0 = S_{2N} = I_{2N}$. 
For $k=1,\dots, 2N-1$, we have to distinguish the case of even/odd $k$. 

\medskip
$\bullet$ For $k$ even, we have
\begin{equation}\label{eq:gen-term-S_k-even}
B(0)^{\ell_1} \cdot B_1 \cdot B(0)^{\ell_2} \cdot B_1 \cdots B(0)^{\ell_k} \cdot B_1 \cdot B(0)^{\ell_{k+1}}= B(0)^{\alpha-\beta}\,,
\end{equation}
where
\begin{equation}\label{def:alpha-beta}
\alpha=\sum_{j=1,\ j \mbox{ {\small  odd}}}^{k+1}\ell_j\,,\qquad  \beta =\sum_{j=2,\ j \mbox { {\small even}}}^{k+1}\ell_j = 2N-k -\alpha\,.
\end{equation}
Now let us count how many vectors $(\ell_1,\ldots,\ell_{k+1})$ lead, thanks to \eqref{eq:gen-term-S_k-even}, to the same matrix 
$$B(0)^{\alpha-\beta} =  B(0)^{2 \alpha+k}\,.
$$
In the first sum of \eqref{def:alpha-beta} the indices are $k/2 +1$, while in the second sum 
they are $k/2$. Hence, for a given $\alpha$, the number of the distinct vectors $(\ell_1,\ldots,\ell_{k+1})$ for which \eqref{def:alpha-beta} holds is
\footnote{Given $M\ge0$ and $a_j\ge0$ integers such that $\sum_{j=1}^n a_j=M$, the number of distinct $(a_1,\ldots,a_n)$ is equal to the binomial coefficient $
\begin{pmatrix} M+n-1 \\ n-1 \end{pmatrix}= \begin{pmatrix} M+n-1 \\ M \end{pmatrix}\,.
 $  }
\begin{equation}\nonumber
\begin{pmatrix} \alpha + \frac k2\\ \frac k2      \end{pmatrix} \begin{pmatrix} 2N-\alpha-1- \frac k2\\ \frac k2 -1      \end{pmatrix}\,,\qquad
\alpha=0,\ldots,2N-k\,.
\end{equation}
If we perform a change of variable $j=\alpha+ k/2$, we get
\begin{equation*}
\begin{pmatrix} j \\ \frac k2      \end{pmatrix} \begin{pmatrix} 2N-j-1\\ \frac k2 -1      \end{pmatrix}
\,,\qquad
j=\frac k2,\ldots,2N-\frac k2\,,
\end{equation*}
and 
\begin{equation}\label{eq:Skeven}
\boxed{S_k(B(0),B_1)
=  \sum_{j=\frac k2 }^{2N-\frac k2}  
\begin{pmatrix} j \\ \frac k2      \end{pmatrix} \begin{pmatrix} 2N-j-1\\ \frac k2 -1      \end{pmatrix}
B(0)^{2j}\,,\qquad k=2,4,\dots, 2N\,. }
\end{equation}

\medskip
$\bullet$ For $k$ odd, we have
\begin{align*}
B(0)^{\ell_1} \cdot B_1 \cdot B(0)^{\ell_2} \cdot B_1 \cdots B(0)^{\ell_k} \cdot B_1 \cdot B(0)^{\ell_{k+1}}
&= B(0)^{\alpha-\beta} B_1\\ 
&=B(0)^{2 \alpha+k} B_1 \\
& =B(0)^{2 \alpha+k-1} B_2(0)\,,
\end{align*}
where $\alpha$, $\beta= 2N - k - \alpha$ are given in \eqref{def:alpha-beta}.

Here, the number of vectors $(\ell_1,\ldots,\ell_{k+1})$ for which \eqref{def:alpha-beta} holds are counted as follows. The indices $\ell_j$ 
are in total $(k+1)/2$ for both sums, hence for a given $\alpha$ the number of terms is
\begin{equation}\nonumber
\begin{pmatrix} \alpha + \frac {k-1}2\\ \frac{k-1}2       \end{pmatrix} \begin{pmatrix} 2N-\alpha- \frac{k-1}2 -1\\  \frac{k-1}2      \end{pmatrix}
\,,\qquad  \alpha=0,\ldots,2N-k\,.
\end{equation}
If we perform a change of variable $j=\alpha+ \frac {k-1}2$, we get
\begin{equation*}
\begin{pmatrix} j \\ \frac{k-1}2       \end{pmatrix} \begin{pmatrix} 2N-j -1 \\  \frac{k-1}2      \end{pmatrix}\,,\qquad
j=\frac {k-1}2,\ldots,2N-\frac {k+1}2\,.
\end{equation*}
Hence, 
\begin{equation}\label{eq:Skodd}
\boxed{
S_k(B(0),B_1) =  \sum_{j=\frac{k-1}2 }^{2N-\frac{k+1}2}   \begin{pmatrix} j \\ \frac{k-1}2     \end{pmatrix} \begin{pmatrix} 2N-j-1\\ \frac{k-1}2     
\end{pmatrix} B(0)^{2j}  B_2(0)\qquad k=1,3,\dots, 2N-1\,.}
\end{equation}

\smallskip
The next proposition gives an explicit formula for the sum of the powers of $B(0)$.

\begin{proposition}
Let $\widehat P$ be the matrix defined by
\begin{equation}\label{def:hat_P}
\widehat P  ~\dot =~  \frac 12 \left(e e^t + v_- v_-^t \right)\,,
\end{equation}
which is the matrix composed by $N^2/4$ squared blocks as 
$$
\begin{bmatrix}
1&  0  & 0 & 1\\
0 &1 &1 &0 \\
0& 1& 1 & 0  \\
1&  0  & 0 & 1
\end{bmatrix}.
$$
Then, the following identity holds:
\begin{equation}\label{eq:sum-full-cycle}
 \sum_{j=0}^{N-1} B(0)^{2j} =  \sum_{j=1}^{N} B(0)^{2j} =\widehat P    \,.
\end{equation}
\end{proposition}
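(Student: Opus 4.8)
The plan is to recognize $B(0)$ as a permutation matrix and to compute the sum directly from the cycle structure of the underlying permutation. Setting all $c_j=0$ in \eqref{BB} --- equivalently, using $B(0)=B_2(0)B_1$ from \eqref{B-of-cc}, which is a product of two permutation matrices --- one reads off that $B(0)$ is the matrix of the permutation $\pi$ of $\{1,\dots,2N\}$ given by the single $2N$-cycle
\[
1\ \mapsto\ 2\ \mapsto\ 4\ \mapsto\ \cdots\ \mapsto\ 2N-2\ \mapsto\ 2N\ \mapsto\ 2N-1\ \mapsto\ 2N-3\ \mapsto\ \cdots\ \mapsto\ 3\ \mapsto\ 1\,,
\]
that is, the circuit already appearing in the proof of Lemma~\ref{lem:BB}(iv). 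Thus $(B(0)^m)_{ik}=1$ if $\pi^m(i)=k$ and $=0$ otherwise, and $B(0)^{2N}=I_{2N}$; the equality of the two sums in \eqref{eq:sum-full-cycle} then follows at once, since replacing $j=0$ by $j=N$ replaces $B(0)^0$ by $B(0)^{2N}$, the same matrix.

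The next step is to describe the cycle structure of $\pi^2$. As $\gcd(2,2N)=2$, the permutation $\pi^2$ splits into two cycles of length $N$. Tracing the orbit of $1$ one finds
\[
1\ \mapsto\ 4\ \mapsto\ 8\ \mapsto\ \cdots\ \mapsto\ 2N\ \mapsto\ 2N-3\ \mapsto\ 2N-7\ \mapsto\ \cdots\ \mapsto\ 5\ \mapsto\ 1
\]
under $\pi^2$ (here I use that $N$ is even), so that the $\pi^2$-cycle through $1$ is exactly $\mathcal C_1=\{\,n\in\{1,\dots,2N\}:\ n\equiv 0\text{ or }1\pmod 4\,\}$, while the complementary cycle is $\mathcal C_2=\{\,n:\ n\equiv 2\text{ or }3\pmod 4\,\}$. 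This partition is precisely the one recorded by the sign pattern of $v_-$: from \eqref{v-pm}, $(v_-)_n=+1$ for $n\in\mathcal C_1$ and $(v_-)_n=-1$ for $n\in\mathcal C_2$.

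Finally I would evaluate $S\doteq\sum_{j=0}^{N-1}B(0)^{2j}$ entrywise. Since $(B(0)^{2j})_{ik}=1$ exactly when $\pi^{2j}(i)=k$, the entry $S_{ik}$ counts the indices $j\in\{0,\dots,N-1\}$ with $\pi^{2j}(i)=k$. Because $\pi^2$ acts on each of $\mathcal C_1,\mathcal C_2$ as an $N$-cycle, this count is $1$ when $i$ and $k$ lie in the same $\mathcal C_\ell$ and $0$ otherwise; hence $S_{ik}=\tfrac12\bigl(1+(v_-)_i(v_-)_k\bigr)$, which is the $(i,k)$ entry of $\tfrac12\bigl(ee^t+v_-v_-^t\bigr)=\widehat P$ from \eqref{def:hat_P}. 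The stated $4\times 4$ block form is then immediate, since over four consecutive indices $4t+1,4t+2,4t+3,4t+4$ the vector $v_-$ carries the signs $+,-,-,+$, so each $4\times4$ block of $\widehat P$ is the ``equal-sign indicator'' displayed. The only slightly delicate point of the argument is the bookkeeping in the second step --- checking that the $\pi$-orbit of $1$ runs through its even-indexed positions in the order $1,4,8,\dots$ and through its odd-indexed positions as $\dots,9,5$, so that the two $\pi^2$-cycles are exactly the residue classes $\{0,1\}$ and $\{2,3\}$ modulo $4$. This is a short but careful index computation; once it is in place, \eqref{eq:sum-full-cycle} follows.
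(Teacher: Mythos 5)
Your proof is correct and follows essentially the same route as the paper's: both arguments rest on recognizing $B(0)$ as a permutation matrix whose square decomposes into the two $N$-cycles supported on $\mathcal{I}'$ and $\mathcal{I}''$, so that summing the $N$ distinct powers $B(0)^{2j}$ produces the same-cycle indicator matrix $\widehat P$. Your version is somewhat more explicit (the residue-classes-mod-$4$ bookkeeping and the identification $\widehat P_{ik}=\tfrac12(1+(v_-)_i(v_-)_k)$), but it is the same idea.
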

\begin{proof} The first equality in \eqref{eq:sum-full-cycle} follows from the following identity: 
$$
\left(I_{2N} - B(0)^2 \right)  \left( \sum_{j=0}^{N-1} B(0)^{2j}\right)= 0\,.
$$
Indeed, 
$$\left(I_{2N} - B(0)^2 \right) \left(  \sum_{j=0}^{N-1} B(0)^{2j}\right) = \left( \sum_{j=0}^{N-1} B(0)^{2j}\right) - \left( \sum_{j=1}^{N} B(0)^{2j}\right)
= I_{2N} - B(0)^{2N} = 0.
$$
To prove the second identity in \eqref{eq:sum-full-cycle}, observe that the matrix $B(0)^{2}$ contains the following two separated "cycles" of length $N$,
\begin{align*}
&1\to 5 \to 9\to \ldots \to 2N-3 \to 2N \to 2N-4 \to \ldots \to 4 \to 1\\
&2\to 3 \to 7\to \ldots \to 2N-1 \to 2N-2 \to  2N-6 \to \ldots \to 6 \to 2\,.
\end{align*}
In the first, second case the indexes are exactly the ones in $\mathcal{I}'$, $\mathcal{I}''$ respectively.

By summing all the permutations $B(0)^{2}$, \dots, $B(0)^{2N}= I_{2N}$ one obtains that every $i^{th}$ row, with $i\in \mathcal{I}'$, 
has value =1 exactly at every index $\in \mathcal{I}'$ and value =0 otherwise. The same holds for every 
$i^{th}$ row with $i\in \mathcal{I}''$\,. Hence \eqref{eq:sum-full-cycle} holds.
\end{proof}

The next theorem provides an estimate on the components of $B(\cc)^{2N}$ in terms of $d$, $N$.

\begin{theorem}\label{theo:main}
Let $N\in 2\N$\,. The following bound holds true:
\begin{align}\label{eq:theoB2N-bis}
\left[B(0)+ {\frac d N}  B_1 \right]^{2N} & =~  
I_{2N} +  {\frac {2d}N} \widehat P + \sum_{j=0}^{2N-1}   \zeta_{j,N} B(0)^{2j}B_2(0) + \sum_{j=1}^{2N-1}  \eta_{j,N} B(0)^{2j}\,,
\end{align}
where
\begin{align}\label{stima-su-zeta_jN}
0\le \sum_{j=0}^{2N-1} \zeta_{j,N} &\le \sinh(2d)  - 2d+ {\frac1N} f_0(d) 
\\
0\le \sum_{j=1}^{2N-1} \eta_{j,N} &\le  \cosh(2d)-1  ~+~  {\frac1N} f_1(d)\,, 
\label{stima-su-eta_jN}
\end{align}
and 
\begin{align}\label{modif-bessel-0}
f_0(d) &~\dot =~ \sum_{\ell=1}^{\infty} \frac{d^{2\ell +1}}{(\ell!)^2}  = d \left[ I_0(2d) -1 \right]\\
f_1(d) &~\dot =~ \sum_{h=1}^{\infty}\frac{d^{2h}}{h! (h-1)!} = d I_1(2d)  \label{modif-bessel-1}\,,
\end{align}
where 
$$
I_\alpha(2x) = \sum_{m=0}^\infty \frac{x^{2m+\alpha}}{m! (m+\alpha)!} \,,\qquad \alpha=0,1
$$
is a modified Bessel function of the first type, see \cite [p. 222]{Spec-fn}.
\end{theorem}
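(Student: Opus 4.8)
The plan is to expand the power via the formula \eqref{eq:expansion-2} already derived,
\[
\left[B(0)+\gamma B_1\right]^{2N}=\sum_{k=0}^{2N}\gamma^k S_k(B(0),B_1),\qquad \gamma=\tfrac dN,
\]
and to substitute the closed expressions \eqref{eq:Skeven}, \eqref{eq:Skodd}, which write each $S_k$ as a \emph{nonnegative} combination of the permutations $B(0)^{2j}$ (for $k$ even) or $B(0)^{2j}B_2(0)$ (for $k$ odd). The term $k=0$ gives $S_0=I_{2N}$. For $k=1$ a short computation is needed: by \eqref{eq:Skodd}, $S_1=\bigl(\sum_{j=0}^{2N-1}B(0)^{2j}\bigr)B_2(0)$, and since $B(0)^{2N}=I_{2N}$ the inner sum equals $2\widehat P$ by \eqref{eq:sum-full-cycle}; moreover $\widehat P B_2(0)=\widehat P$, because $B_2(0)$ fixes the indices $1$ and $2N$ and transposes $2j\leftrightarrow 2j+1$ for $j=1,\dots,N-1$ (see \eqref{B2}), hence $e^tB_2(0)=e^t$, while each such transposition swaps two coordinates of $v_-$ carrying equal values (by the $(1,-1,-1,1)$-periodic pattern in \eqref{v-pm}), so that $v_-^tB_2(0)=v_-^t$ and $\widehat P B_2(0)=\tfrac12(ee^t+v_-v_-^t)B_2(0)=\widehat P$. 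Thus $\gamma^1 S_1=\tfrac{2d}{N}\widehat P$, which is the second term of \eqref{eq:theoB2N-bis}.

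Collecting the remaining terms then produces the decomposition \eqref{eq:theoB2N-bis}: the even $k$ with $2\le k\le 2N$ combine into $\sum_{j=1}^{2N-1}\eta_{j,N}B(0)^{2j}$, with $\eta_{j,N}=\sum\gamma^k\binom{j}{k/2}\binom{2N-j-1}{k/2-1}$ (sum over even $k$, $2\le k\le 2N$), and the odd $k$ with $3\le k\le 2N-1$ combine into $\sum_{j}\zeta_{j,N}B(0)^{2j}B_2(0)$, with $\zeta_{j,N}=\sum\gamma^k\binom{j}{(k-1)/2}\binom{2N-j-1}{(k-1)/2}$ (sum over odd $k$, $3\le k\le 2N-1$). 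Since the binomial coefficients are nonnegative and $\gamma>0$, we get $\eta_{j,N},\zeta_{j,N}\ge 0$, which gives the left-hand inequalities of \eqref{stima-su-zeta_jN}--\eqref{stima-su-eta_jN}.

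For the right-hand inequalities the key remark is that $S_k$ is, by \eqref{def:S_k}, the sum of exactly $\binom{2N}{k}$ permutation matrices; since every such matrix, and every $B(0)^{2j}$ (resp. $B(0)^{2j}B_2(0)$), has all row sums equal to $1$, the coefficients in the decomposition of $S_k$ must add up to $\binom{2N}{k}$ (equivalently, this is the Chu--Vandermonde identity $\sum_j\binom{j}{a}\binom{n-j}{b}=\binom{n+1}{a+b+1}$ applied to \eqref{eq:Skeven}, \eqref{eq:Skodd}). Hence $\sum_j\eta_{j,N}=\sum\gamma^k\binom{2N}{k}$ over even $k\in\{2,\dots,2N\}$ and $\sum_j\zeta_{j,N}=\sum\gamma^k\binom{2N}{k}$ over odd $k\in\{3,\dots,2N-1\}$. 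One then uses
\[
\gamma^k\binom{2N}{k}=\frac{(2d)^k}{k!}\prod_{i=1}^{k-1}\Bigl(1-\frac{i}{2N}\Bigr),\qquad 0\le k\le 2N,
\]
so that bounding the product by $1$ already gives $\sum_j\eta_{j,N}\le\cosh(2d)-1$ and $\sum_j\zeta_{j,N}\le\sinh(2d)-2d$, which are stronger than the claimed bounds since $f_0,f_1\ge 0$; to exhibit the first order in $1/N$ explicitly one keeps instead $\prod_{i=1}^{k-1}(1-\tfrac{i}{2N})\ge 1-\tfrac{k(k-1)}{4N}$ and identifies the resulting $O(1/N)$ remainders of the two series with $\tfrac1N f_1(d)$ and $\tfrac1N f_0(d)$ by means of the modified-Bessel expansions \eqref{modif-bessel-0}--\eqref{modif-bessel-1}.

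Most of the combinatorial work is in fact already contained in the preceding propositions (the closed forms \eqref{eq:Skeven}, \eqref{eq:Skodd} and the full-cycle identity \eqref{eq:sum-full-cycle}); the two delicate points that remain are the simplification $\widehat P B_2(0)=\widehat P$, which is exactly what turns $\gamma S_1$ into the clean leading correction $\tfrac{2d}{N}\widehat P$, and the careful tracking of the $O(1/N)$ correction of the binomial ratio through the summation over $k$, so that it matches exactly the modified-Bessel expressions $f_0$ and $f_1$ appearing in \eqref{stima-su-zeta_jN}--\eqref{stima-su-eta_jN}.
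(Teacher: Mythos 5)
Your proof is correct, and while it uses the same algebraic skeleton as the paper (the expansion \eqref{eq:expansion-2} together with the closed forms \eqref{eq:Skeven}, \eqref{eq:Skodd} and the full-cycle identity \eqref{eq:sum-full-cycle}), the way you obtain the coefficient bounds \eqref{stima-su-zeta_jN}--\eqref{stima-su-eta_jN} is genuinely different and in fact sharper. The paper bounds each $\zeta_{j,N}$, $\eta_{j,N}$ individually via $\binom{n}{k}\le n^k/k!$, then recognizes the sum over $j$ as a Riemann sum for $\int_{-1}^1(1-x^2)^\ell\,dx$, pays a price of $\DX$ per term for the comparison, and this is precisely where the corrections $\frac1N f_0(d)$, $\frac1N f_1(d)$ and the modified Bessel functions enter. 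You instead sum the coefficients of $S_k$ exactly — either by the Chu--Vandermonde identity or by the neat observation that $S_k$ is a sum of $\binom{2N}{k}$ permutation matrices so its nonnegative coefficients must total $\binom{2N}{k}$ — and then use $\gamma^k\binom{2N}{k}\le(2d)^k/k!$, obtaining the cleaner bounds $\sum_j\zeta_{j,N}\le\sinh(2d)-2d$ and $\sum_j\eta_{j,N}\le\cosh(2d)-1$, which imply the stated ones since $f_0,f_1\ge0$; this would even slightly improve the constant $C_N(d)$ in Proposition~\ref{prop:5.8}. You also make explicit the identity $\widehat P B_2(0)=\widehat P$ needed to turn $\gamma S_1$ into $\frac{2d}{N}\widehat P$, a step the paper leaves implicit. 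One small caveat: your closing remark that keeping $\prod_{i=1}^{k-1}(1-\tfrac{i}{2N})\ge 1-\tfrac{k(k-1)}{4N}$ would reproduce \emph{exactly} the remainders $\frac1N f_0(d)$ and $\frac1N f_1(d)$ is not accurate — the resulting series (e.g.\ $d^2\sinh(2d)$ for the odd part) does not coincide with the Bessel expressions \eqref{modif-bessel-0}--\eqref{modif-bessel-1}, which arise specifically from the paper's integral comparison — but this aside is not needed anywhere in your argument, so the proof stands.
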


\begin{proof} 
From the identity \eqref{eq:expansion-2} we have
\begin{align}\label{eq:B2N}
 \left[B(0)+ \gamma B_1 \right]^{2N} =& I_{2N}
 + \left[\sum_{\substack{k=1\\k \, \text{odd}}}^{2N-1} + 
\sum_{\substack{k=2\\k\,\text{even}}}^{2N} \right] \gamma^k S_k(B(0),B_1) 
\end{align}

First, let us focus on the sum with $k$ odd in \eqref{eq:B2N}. By \eqref{eq:Skodd}, we substitute the expression for $S_k$ 
and exchange the sum in $k$ and $j$ to get
\begin{align}\label{eq:part1}
\sum_{\substack{k=1\\k \, \text{odd}}}^{2N-1}\gamma^k S_k(B(0),B_1)&=
\sum_{j=0}^{2N-1}  \widetilde \zeta_{j,N} B(0)^{2j}B_2(0),
\end{align}
where
\begin{align*}\nonumber
\widetilde \zeta_{j,N}&=\sum_{\substack{k=1\\k\,\text{odd}}}^{\min\{2j+1,4N-2j-1\}}\gamma^k \begin{pmatrix}
j \\ \frac{k-1}{2}
\end{pmatrix} \begin{pmatrix}
2N-j-1 \\ \frac{k-1}{2} 
\end{pmatrix}\\  \nonumber
&=\sum_{\substack{\ell=0}}^{\min\{j,2N-j-1\}}\gamma^{2\ell +1} \begin{pmatrix}
j \\ \ell
\end{pmatrix} \begin{pmatrix}
2N-j-1 \\ \ell
\end{pmatrix} \,.
\end{align*}  
It is convenient to separate, in the expression of $\widetilde \zeta_{j,N}$, the term with $\ell=0$ and the sum for $\ell\ge1$, 
since the former does not depend on $j,N$: 
\begin{equation}  \label{def:binomial-coeff}
\widetilde \zeta_{j,N} = \gamma + \zeta_{j,N} \,,\qquad    
\zeta_{j,N} \dot = \sum_{\ell=1}^{\min\{j,2N-j-1\}}\gamma^{2\ell +1} \begin{pmatrix} j \\ \ell
\end{pmatrix} \begin{pmatrix}
2N-j-1 \\ \ell
\end{pmatrix} \,.
\end{equation}
Next we provide an estimate on the coefficients $\zeta_{j,N}$. Using the inequality 
$$
\begin{pmatrix} n \\ k
\end{pmatrix} \le \frac{n^k}{k!}\,,\qquad 0\le k\le n
$$
and the definition $\gamma=d/N$, we find that
\begin{align}\label{ineq:zeta_jN}
\zeta_{j,N}&\le 
\sum_{\substack{\ell=1}}^{\infty}\frac{d^{2\ell+1}}{(\ell!)^2} \frac{ j^\ell}{N^\ell}  \frac{(2N-j-1)^\ell}{N^\ell} \,.
\end{align}  
Now we introduce another change of variable, 
\begin{equation}\label{def:x-j}
x_j = -1 + \frac j N\,,\qquad \frac{j}{N} = (1+x_j)\,,\qquad j=0,\ldots, 2N-1\,.
\end{equation}
Thanks to the inequality \eqref{ineq:zeta_jN} we get
\begin{align}\nonumber
0\le \zeta_{j,N}&\le \sum_{{\ell=1}}^{\infty}\frac{d^{2\ell+1}}{(\ell!)^2} {(1+x_j)}^\ell  \left(1-x_j- \frac 1 N\right)^\ell \\
&\le \sum_{{\ell=1}}^{\infty}\frac{d^{2\ell+1}}{(\ell!)^2} {(1-x_j ^ 2 )}^\ell \,.\nonumber 
\end{align}  
As a consequence, we deduce an estimate for the sum of the $\zeta_{j,N}$:
\begin{align*} 
0\le \sum_{j=0}^{2N-1} \zeta_{j,N} &\le {\frac 1 N} \sum_{j=0}^{2N-1} \sum_{{\ell=1}}^{\infty}\frac{d^{2\ell+1}}{(\ell!)^2} {(1-x_j ^ 2 )}^\ell \\
&=  \sum_{\ell=1}^{\infty} \frac{d^{2\ell+1}}{(\ell!)^2} \left\{\frac 1N \sum_{j=0}^{2N-1} {(1-x_j ^ 2 )}^\ell \right\}
\end{align*}
where we used that $\DX=1/N$. Using the definition \eqref{def:x-j} we notice that
\begin{equation*}
\frac 1 N\sum_{j=0}^{2N-1} {(1-x_j ^ 2 )}^\ell ~~\to~~ \int_{-1}^1 (1-x^2)^\ell\,dx \quad \mbox{as}\  {N\to\infty}, \qquad \ell\ge 1\,;
\end{equation*}
more precisely the following estimate holds,
\begin{align}\nonumber
\sum_{j=0}^{2N-1} {(1-x_j ^ 2 )}^\ell\DX & =  \left(\sum_{j=0}^{N-1} + \sum_{j=N+1}^{2N-1}\right) {(1-x_j ^ 2 )}^\ell\DX ~+~ \DX\\
&\le \int_{-1}^1 (1-x^2)^\ell\,dx ~+~ \DX\,. \label{int-bessel}
\end{align}
Since $(1+2\ell)! = (1+2\ell)!!  \cdot 2^\ell \cdot \ell ! $, it is easy to check the following identities
\begin{equation}\label{integrals}
\int_{-1}^1 (1-x^2)^\ell\,dx = \frac{2^{\ell+1} \cdot \ell!} {(1+2\ell)!!}  =    \frac{2^{2\ell+1}\cdot (\ell!)^2} {(1+2\ell)!} \qquad \ell\ge 1\,.  
\end{equation}
By plugging the previous estimates into the sum of the $\zeta_{j,N}$ we get
\begin{align*} 
0\le \sum_{j=0}^{2N-1} \zeta_{j,N} 
& \leq \sum_{\ell=1}^{\infty} \frac{d^{2\ell+1}}{(\ell!)^2}  \frac{2^{2\ell+1}\cdot (\ell!)^2} {(1+2\ell)!} ~+~  
\DX \underbrace{\sum_{\ell=1}^{\infty} \frac{d^{2\ell +1}}{(\ell!)^2}}_{\dot = f_0(d)}\\
&=\sum_{\ell=1}^{\infty} \frac{(2d)^{2\ell+1}} {(1+2\ell)!} ~+~  \DX f_0(d)\\
&=\sinh(2d)  -2d ~+~  \DX f_0(d)\,.
\end{align*}
Therefore \eqref{stima-su-zeta_jN} follows.

\smallskip
Analogously we treat the sum with $k$ even in \eqref{eq:B2N}. By \eqref{eq:Skeven} we can exchange the sum in $k$ and $j$,  
hence we rewrite this term as 
\begin{align}\label{eq:part2}
\sum_{\substack{k=2\\k\,\text{even}}}^{2N} \gamma^k S_k(B(0),B_1)=
\sum_{j=1}^{2N-1}  \eta_{j,N} B(0)^{2j},
\end{align} 
where we set
\begin{align*}
\eta_{j,N} ~&\dot = \sum_{\substack{k=2\\ k\, \text{even}}}^{\min\{2j,4N-2j\}}\gamma^k 
\begin{pmatrix}
j \\ \frac{k}{2}
\end{pmatrix} \begin{pmatrix}
2N-j -1 \\ \frac{k}{2}-1
\end{pmatrix}\\
&= \sum_{h=1}^{\min\{j,2N-j\}}\gamma^{2h} 
\begin{pmatrix}
j \\ h
\end{pmatrix} \begin{pmatrix}
2N-j -1 \\ h -1
\end{pmatrix}.
\end{align*}
Similarly to the estimate \eqref{ineq:zeta_jN} for $\zeta_{j,N}$ and using the change of variables \eqref{def:x-j}, we find that
\begin{align*}
\eta_{j,N} 
&\le   \frac 1N  \sum_{h=1}^{\infty}\frac{d^{2h}}{h! (h-1)! }  (1+x_j)^h \left(1-x_j - \frac 1N \right)^{h-1} \\
&\le  \frac 1N  \sum_{h=1}^{\infty}\frac{d^{2h}}{h! (h-1)!}  (1- x_j^2)^{h-1} \left(1+x_j  \right)\,.
\end{align*}
The sum of the $\eta_{j,N} $  can be estimated as follows,
\begin{align*}
\sum_{j=1}^{2N-1} \eta_{j,N} &\le   \sum_{h=1}^{\infty}\frac{d^{2h}}{h! (h-1)!} \left\{ \frac 1N \sum_{j=1}^{2N-1} (1- x_j^2)^{h-1} \left(1+x_j  \right)  \right\} \,.
\end{align*}
By definition of the \eqref{def:x-j} and simmetry we have
$$
\sum_{j=1}^{2N-1} (1- x_j^2)^{h-1} x_j =0\,,
$$
while by \eqref{int-bessel} with $\ell=h-1$ and by \eqref{integrals} we find that
\begin{align*}
\frac 1N  \sum_{j=0}^{2N-1} {(1-x_j ^ 2 )}^{h-1} &\le  \int_{-1}^1 (1-x^2)^{h-1}\,dx ~+~ \frac 1N \\
& =  \frac{2^{2h-1}\cdot ((h-1)!)^2} {(2h-1)!}  ~+~ \frac 1N\,.
\end{align*}
Therefore 
\begin{align*}
\sum_{j=1}^{2N-1} \eta_{j,N} &\le  \sum_{h=1}^{\infty}\frac{d^{2h}}{h! (h-1)!}   \frac{2^{2h-1}\cdot ((h-1)!)^2} {(2h-1)!}   ~+~\frac 1N
\underbrace{\sum_{h=1}^{\infty}\frac{d^{2h}}{h! (h-1)!}}_{\dot = f_1(d)}\\
&= \sum_{h=1}^{\infty}\frac{(2d)^{2h}}{2h (2h-1)!}   ~+~\frac{f_1(d)}N\\
&= \sum_{h=1}^{\infty}\frac{(2d)^{2h}}{(2h)!}   ~+~ \frac{f_1(d)}N \\
&= \cosh(2d)-1  ~+~\frac{f_1(d)}N\,,
\end{align*}
that leads to \eqref{stima-su-eta_jN}.
\end{proof}

\begin{remark} For $a\in\R$ and $n\ge0$, $n$ integer, we introduce the notation (\textit{shifted factorial}, see \cite[p. 2]{Spec-fn}):
\begin{equation}\label{def:rising-symbol}
(a)_n = \begin{cases}
1& n=0\\
a(a+1) \cdots (a+n-1) & n\ge 1\,.
\end{cases}
\end{equation}
With this notation we can write $(1)_n = n!$. Observe that, if $a$ is a negative integer, then $(a)_n$ vanishes for every $n\ge |a|+1$.
Then the product of the binomial coefficients in \eqref{def:binomial-coeff} can be rewritten as follows,
\begin{equation*}
\begin{pmatrix}
j \\ \ell
\end{pmatrix} \begin{pmatrix}
2N-j-1 \\ \ell
\end{pmatrix} = \frac1{(\ell!)^2}  
(-j)_\ell 
\cdot   (-2N+j+1)_\ell\,, \qquad  \ell\ge 0\,,
\end{equation*}
and it is clear that the above quantity vanishes for $\ell > \min\{j,2N-j-1\}$. Therefore the coefficients $\zeta_{j,N}$ is rewritten as
\begin{align}\label{eq:zeta_jN}
\zeta_{j,N}&=\sum_{\substack{\ell=1}}^{\infty}\frac{\gamma^{2\ell+1}}{\ell!} \frac{ (-j)_\ell  (-2N+j+1)_\ell} {(1)_\ell} \,.
\end{align}  
The coefficients $\widetilde \zeta_{j,N}$ in \eqref{eq:part1} can be rewritten in terms of the hypergeometric function, see \cite{Spec-fn},
\begin{equation*}
{}_2 F_1(a,b,c;z) = \sum_{n=0}^\infty \frac{(a)_n (b)_n}{(c)_n} \frac{z^n}{n!}\,,\qquad a,b,c\in\R\,.
\end{equation*}
In conclusion we have
\begin{align}\nonumber
\widetilde  \zeta_{j,N}&= \gamma   ~ {}_2 F_1(-j, -2N+j+1 , 1;   \gamma^2)\,,\qquad \gamma=\frac d N
\end{align}
and hence, from \eqref{eq:part1}, we obtain:
\begin{align}\nonumber
\sum_{\substack{k=1\\k \, \text{odd}}}^{2N-1}\gamma^k S_k
&= \gamma ~ \sum_{j=0}^{2N-1} {}_2 F_1(-j, -2N+j+1 , 1;   \gamma^2) B(0)^{2j}B_2(0)\,.
\end{align}
\end{remark}

Next, we want to prove a contractive estimate for $\|B(\cc)^{2N}\vvv_{ij}\|_{\ell_1}$. We recall that here $\cc = c(1,\ldots,1)\in \R^{N-1}$ with
$c=d/N$ for some $d>0$.

\begin{proposition}\label{prop:5.8}
	Let $i,j$ be indices both either $\in\mathcal{I}'$ or $\in\mathcal{I}''$ (see \eqref{def:subsets}). 
For every $d>0$ there is a constant $C_N(d)>0$ such that 
\begin{equation}\label{eq:norm1}
\bigl\|B(\cc)^{2N}\vvv_{ij} \bigr\|_{\ell_1} \le C_N(d) \bigl\|\vvv_{ij}\bigr\|_{\ell_1},
\end{equation}
where $\vvv_{ij}$ are defined at \eqref{eq:vij} and 
\begin{equation}\label{eq:Ctilde}
C_N(d)\to (1-2de^{-2d}) <1\,,\qquad N\to\infty \,.
\end{equation}
\end{proposition}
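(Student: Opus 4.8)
The plan is to combine the exact expansion of $B(\cc)^{2N}$ provided by Theorem~\ref{theo:main} with the key observation that the vectors $\vvv_{ij}$ with $i,j$ in the same index set, $\mathcal{I}'$ or $\mathcal{I}''$, are annihilated by $\widehat P$. Recall from \eqref{eq:expansion}--\eqref{eq:gamma=dN} that, for $N$ large enough (so that $c\in[0,1/2)$), $B(\cc)^{2N}=(1-c)^{2N}\bigl[B(0)+\tfrac dN B_1\bigr]^{2N}$ with $(1-c)^{2N}=(1+d/N)^{-2N}$. Applying \eqref{eq:theoB2N-bis} to $\vvv_{ij}$ then gives
\[
B(\cc)^{2N}\vvv_{ij}=(1-c)^{2N}\Bigl[\vvv_{ij}+\tfrac{2d}{N}\,\widehat P\vvv_{ij}+\sum_{j'=0}^{2N-1}\zeta_{j',N}\,B(0)^{2j'}B_2(0)\vvv_{ij}+\sum_{j'=1}^{2N-1}\eta_{j',N}\,B(0)^{2j'}\vvv_{ij}\Bigr].
\]

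The first step is to check that $\widehat P\vvv_{ij}=0$. By \eqref{def:hat_P}, $\widehat P\vvv_{ij}=\tfrac12(e\cdot\vvv_{ij})\,e+\tfrac12(v_-\cdot\vvv_{ij})\,v_-$. Since $\vvv_{ij}$ has one entry $+1$, one entry $-1$ and all others $0$, we have $e\cdot\vvv_{ij}=0$ with no restriction. For the second term, note from \eqref{v-pm} that $v_-$ has the $4$-periodic sign pattern $(1,-1,-1,1)$, so that, comparing with \eqref{def:subsets}, $(v_-)_p=+1$ exactly for $p\in\mathcal{I}'$ and $(v_-)_p=-1$ exactly for $p\in\mathcal{I}''$. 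Hence, when $i,j$ both belong to $\mathcal{I}'$ or both to $\mathcal{I}''$, one has $(v_-)_i=(v_-)_j$, so $v_-\cdot\vvv_{ij}=(v_-)_i-(v_-)_j=0$, and therefore $\widehat P\vvv_{ij}=0$. This is the only place where the hypothesis on $i,j$ enters, and it is exactly what eliminates the term $\tfrac{2d}{N}\widehat P\vvv_{ij}$, whose $\ell_1$-norm would otherwise be of order $1$ and would spoil the contraction.

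The second step is the $\ell_1$-estimate. The matrices $B(0)$, $B_1$ and $B_2(0)$ are permutation matrices (they are $B(\cc)$, $B_1$, $B_2(\cc)$ at $\cc=0$), so $B(0)^{2j'}B_2(0)\vvv_{ij}$ and $B(0)^{2j'}\vvv_{ij}$ have $\ell_1$-norm equal to $\|\vvv_{ij}\|_{\ell_1}$ for every $j'$. Using the triangle inequality, the nonnegativity of $\zeta_{j',N}$ and $\eta_{j',N}$ from \eqref{stima-su-zeta_jN}--\eqref{stima-su-eta_jN}, and $\widehat P\vvv_{ij}=0$, we obtain
\[
\bigl\|B(\cc)^{2N}\vvv_{ij}\bigr\|_{\ell_1}\le(1-c)^{2N}\Bigl[1+\sum_{j'=0}^{2N-1}\zeta_{j',N}+\sum_{j'=1}^{2N-1}\eta_{j',N}\Bigr]\,\|\vvv_{ij}\|_{\ell_1}.
\]
Inserting the bounds of Theorem~\ref{theo:main} and using $\sinh(2d)+\cosh(2d)=\ee^{2d}$, the bracket is at most $\ee^{2d}-2d+\tfrac1N\bigl(f_0(d)+f_1(d)\bigr)$, so one may take $C_N(d)\doteq\bigl(1+\tfrac dN\bigr)^{-2N}\bigl[\ee^{2d}-2d+\tfrac{1}{N}(f_0(d)+f_1(d))\bigr]>0$, which gives \eqref{eq:norm1}.

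Finally, letting $N\to\infty$, $(1+d/N)^{-2N}\to\ee^{-2d}$ and the bracket tends to $\ee^{2d}-2d$, so $C_N(d)\to\ee^{-2d}(\ee^{2d}-2d)=1-2d\,\ee^{-2d}$, which is strictly less than $1$ for every $d>0$; this is \eqref{eq:Ctilde}. The only non-routine ingredient is the cancellation $\widehat P\vvv_{ij}=0$; the rest is bookkeeping with the triangle inequality and an elementary limit.
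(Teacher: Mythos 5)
Your proof is correct and follows essentially the same route as the paper's: both rest on the expansion of Theorem~\ref{theo:main} together with the cancellation of the $\tfrac{2d}{N}\widehat P$ term on $\vvv_{ij}$ (the paper phrases this as equality of the $i$-th and $j$-th columns of $\widehat P$, which is the same fact as $\widehat P\vvv_{ij}=0$), and both arrive at the identical constant $C_N(d)$. Your bookkeeping of the remaining terms — nonnegative coefficients, permutation matrices preserving the $\ell_1$-norm, triangle inequality — is a slightly cleaner organization of the paper's entrywise column comparison, but not a different argument.
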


\begin{proof}
Notice that 
\begin{align*}
B(\cc)^{2N}\vvv_{ij}= B(\cc)^{2N} e_{i} - B(\cc)^{2N}e_j= B(\cc)^{2N}[i]-B(\cc)^{2N}[j],
\end{align*}
where $e_i,e_j$ are vectors of the canonical basis of $\R^{2N}$ and $B(\cc)^{2N}[i],B(\cc)^{2N}[j]$ denote the $i$-th and 
$j$-th column of the matrix $B(\cc)^{2N}$. 
Hence, $\|B(\cc)^{2N}\vvv_{ij}\|_{\ell_1}$ corresponds to the distance between two columns of $B(\cc)^{2N}$ 
indicized by either $i,j\in\mathcal{I}'$  or  $\in\mathcal{I}''$.

Assume that $i,j\in\mathcal{I}'$, the other case being completely similar. 
We use the expression \eqref{eq:expansion} for $B(\cc)^{2N}$ and Theorem~\ref{theo:main} to get
\begin{align*}
\left\|B(\cc)^{2N}[i]-B(\cc)^{2N}[j]\right\|_{\ell_1}=\left(1+ \frac {d}N \right)^{-2N} \sum_{\ell=1}^{2N}|b_{\ell i}-b_{\ell j}|,
\end{align*}
where $b_{\ell i}$ denotes the generic element of the matrix $[B(0) + \gamma B_1]^{2N}$ and
where $b_{\ell i}$, $b_{\ell j}=0$ if $\ell\notin\mathcal{I}'$. 

A key observation is that, by applying formula \eqref{eq:theoB2N-bis} and recalling the definition \eqref{def:hat_P} of $\widehat P$, 
the contribution from the term $2{\frac dN} \widehat P$ is zero because
$$
\widehat P[i] - \widehat P[j] = 0\in \R^{2N} \,,\qquad i,j\in\mathcal{I}'\,.
$$
The same property holds if $i,j\in\mathcal{I}''$. Therefore
\begin{align*}
\sum_{\ell=1}^{2N}|b_{\ell i}-b_{\ell j}|&\le |b_{i i}-b_{i j}| + |b_{j i}-b_{j j}| + \sum_{\ell\not = i, j}^{2N}|b_{\ell i}-b_{\ell j}|\\
&\le 2 \left(1 + \sum_{j=0}^{2N-1} \zeta_{j,N} +  \sum_{j=1}^{2N-1} \eta_{j,N} \right)   \\ 
&\le 2 \left( \sinh(2d)  - 2d+ {\frac1N} f_0(d)  +  \cosh(2d)  ~+~  {\frac1N} f_1(d)  \right)  \\
&=\left\|\vvv_{ij}\right\|_{\ell_1}\left[e^{2d}-2d   ~+~  {\frac1N} [f_0(d)  + f_1(d)]  \right].
\end{align*}
By denoting 
\begin{equation*}
C_N(d) \dot = \left(1+ \frac {d}N \right)^{-2N} \left[e^{2d}-2d   ~+~  {\frac1N} [f_0(d)  + f_1(d)]  \right]\,,
\end{equation*}
we easily get that
$
C_N(d) \to (1 - 2d\ee^{-2d})$ as  $N\to\infty$\,, and this completes the proof of Proposition~\ref{prop:5.8}\,.
\end{proof}

\subsection{Nonlinear damping}\label{subsec:5.4}
In this subsection we prove Theorem~\ref{main-theorem}.  

Assume that \eqref{hyp:k-unif-positive} holds, that is $0<k_1\leq k(x)\leq k_{2}$ for some positive $k_1, k_2$ and recall the definition of 
$0<d_1\le d_2$ given in \eqref{d_*}. We study the behavior of 
$$
\BB_{2N}  = \left[ B^{(2N)} B^{(2N-1)}\cdots  B^{(2)}  B^{(1)}\right] \,.
$$ 
By the inequality \eqref{inequality-B} we have
\begin{equation*}
B(\cc^{n})\leq \left(1+ \frac{d_1}N \right)^{-1} \left[B(0) + \frac{d_2}N B_{1} \right]\qquad \forall\, n\,,
\end{equation*}
and then
\begin{equation}\label{BB_2N}
\BB_{2N}\le  \left(1+ \frac{d_1}N \right)^{-2N} \left[B(0) + \frac{d_2}N B_{1} \right]^{2N}\,.
\end{equation}
\begin{proposition}\label{nonlinear-damping}
	There exists a constant $C_N(d_1,d_2)$ such that  as $N\to\infty$
	\begin{equation}\label{def:C(d_1,d_2)}
	C_N(d_1,d_2) \to \ee^{-2d_1}(\ee^{2d_2}-2d_2)~\dot{=}~C(d_1,d_2)
	\end{equation}
	and that for $i,j$ indices fixed either $\in\mathcal{I}'$ or $\in\mathcal{I}''$ it holds
	\begin{equation*}
	\bigl\|\BB_{2N}\vvv_{ij} \bigr\|_{\ell_1} \le C_N(d_1,d_2) \bigl\|\vvv_{ij}\bigr\|_{\ell_1}\,.
	\end{equation*}
	In particular, if $d_1$ and $d_2$ satisfy \eqref{cond-on-d_*}, then $C_N(d_1,d_2)<1$ for $N$ large enough.
\end{proposition}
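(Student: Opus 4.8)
\emph{Proof plan.} I would reduce the statement to a two–column estimate for $\BB_{2N}$ and then transfer the computation already carried out in the linear case. Since $\vvv_{ij}=e_i-e_j$ (see \eqref{eq:vij}), the vector $\BB_{2N}\vvv_{ij}$ is the difference of columns $i$ and $j$ of $\BB_{2N}$, so that
\[
\bigl\|\BB_{2N}\vvv_{ij}\bigr\|_{\ell_1}=\sum_{\ell=1}^{2N}\bigl|(\BB_{2N})_{\ell i}-(\BB_{2N})_{\ell j}\bigr|.
\]
Two structural facts should be isolated first. (a) Each matrix $B(\cc^{n})=B_2(\cc^{n})B_1$ interchanges the two coordinate subspaces spanned by $\{e_m:m\in\mathcal I'\}$ and by $\{e_m:m\in\mathcal I''\}$ — because $B_1$ swaps the two classes while $B_2(\cc^{n})$ preserves each of them — so $\BB_{2N}$, a product of an \emph{even} number of them, leaves both subspaces invariant; hence for $i,j$ in the same class only the indices $\ell$ in that class enter the sum above. (b) By \eqref{BB_2N}, $\BB_{2N}\le(1+d_1/N)^{-2N}\widehat M$ entrywise, with $\widehat M:=\bigl[B(0)+\tfrac{d_2}{N}B_1\bigr]^{2N}$; at the same time $\BB_{2N}$ is doubly stochastic, so each of its columns has $\ell_1$–norm $1$.

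Next I would expand $\widehat M$ through Theorem~\ref{theo:main} with $d=d_2$,
\[
\widehat M=I_{2N}+\frac{2d_2}{N}\widehat P+\sum_{j}\zeta_{j,N}B(0)^{2j}B_2(0)+\sum_{j}\eta_{j,N}B(0)^{2j},
\]
and use the cancellation $\widehat P[i]=\widehat P[j]$, valid for $i,j$ in the same class, exactly as in the proof of Proposition~\ref{prop:5.8}: the $\tfrac{2d_2}{N}\widehat P$–term does not affect the column difference, the identity contributes $2$, and — each $B(0)^{2j}B_2(0)$ and $B(0)^{2j}$ being a permutation — the two remaining sums contribute at most $2\sum_j\zeta_{j,N}+2\sum_j\eta_{j,N}$, which by \eqref{stima-su-zeta_jN}--\eqref{stima-su-eta_jN} is $\le 2\bigl(\sinh(2d_2)+\cosh(2d_2)-2d_2-1\bigr)+\tfrac2N\bigl(f_0(d_2)+f_1(d_2)\bigr)=2\bigl(\ee^{2d_2}-2d_2-1\bigr)+O(1/N)$. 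This is precisely the estimate $\|\widehat M\vvv_{ij}\|_{\ell_1}\le 2\bigl(\ee^{2d_2}-2d_2\bigr)+O(1/N)$ that appears inside the proof of Proposition~\ref{prop:5.8}. Combining it with \eqref{BB_2N} one reaches
\[
\bigl\|\BB_{2N}\vvv_{ij}\bigr\|_{\ell_1}\le\Bigl(1+\tfrac{d_1}{N}\Bigr)^{-2N}\Bigl(2\bigl(\ee^{2d_2}-2d_2\bigr)+O(1/N)\Bigr),
\]
that is, $\|\BB_{2N}\vvv_{ij}\|_{\ell_1}\le C_N(d_1,d_2)\,\|\vvv_{ij}\|_{\ell_1}$ with $C_N(d_1,d_2):=(1+d_1/N)^{-2N}\bigl(\ee^{2d_2}-2d_2+O(1/N)\bigr)$, recalling $\|\vvv_{ij}\|_{\ell_1}=2$. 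Since $(1+d_1/N)^{-2N}\to\ee^{-2d_1}$, this gives \eqref{def:C(d_1,d_2)}; and as \eqref{cond-on-d_*} reads exactly $\ee^{2d_2}-2d_2<\ee^{2d_1}$, i.e.\ $C(d_1,d_2)<1$, we obtain $C_N(d_1,d_2)<1$ for $N$ large, which is the last assertion.

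The step that needs care — and which I expect to be the main obstacle — is the passage from the \emph{entrywise} inequality $\BB_{2N}\le(1+d_1/N)^{-2N}\widehat M$ to a bound for the \emph{difference} of two columns of $\BB_{2N}$, since entrywise domination by itself does not control column differences. Here one must genuinely exploit that $\BB_{2N}$ is doubly stochastic — equivalently the complementary lower bound $\BB_{2N}\ge\bigl[(1-\bar c)\,B(0)+\underline c\,B_1\bigr]^{2N}$, which also follows entrywise from the bounds $\underline c\le c^{n}_j\le\bar c$ of \eqref{c_j-bound-k-bounded} and can be expanded by Theorem~\ref{theo:main} — together with the class–block structure of (a) and the cancellation $\widehat P[i]=\widehat P[j]$, so that the large $\tfrac{2d_2}{N}\widehat P$–contribution is killed in the column difference. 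Once this is organized, the rest is routine bookkeeping: inserting the estimates of Theorem~\ref{theo:main} and passing to the limit $(1+d_1/N)^{-2N}\to\ee^{-2d_1}$.
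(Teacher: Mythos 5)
Your proposal follows the same route as the paper's (very short) proof of this proposition: the entrywise bound \eqref{BB_2N}, the expansion of $\bigl[B(0)+\tfrac{d_2}{N}B_1\bigr]^{2N}$ from Theorem~\ref{theo:main}, and the column--difference computation of Proposition~\ref{prop:5.8}. However, the step you yourself single out as ``the main obstacle'' is a genuine gap, and your proposal does not close it. The implication used in your main line of argument --- that $\BB_{2N}\le(1+d_1/N)^{-2N}\widehat M$ entrywise forces $\|\BB_{2N}\vvv_{ij}\|_{\ell_1}\le(1+d_1/N)^{-2N}\|\widehat M\vvv_{ij}\|_{\ell_1}$ --- is false for nonnegative matrices in general: take $A=I_2$ and $M$ the all-ones $2\times2$ matrix, so that $A\le M$ entrywise while $\|A[1]-A[2]\|_{\ell_1}=2$ and $\|M[1]-M[2]\|_{\ell_1}=0$. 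The repairs you gesture at (double stochasticity of $\BB_{2N}$, a complementary entrywise lower bound from \eqref{c_j-bound-k-bounded}) are relevant ingredients, but it is not ``routine bookkeeping'' to combine them: for instance the crude sandwich estimate $|A_{\ell i}-A_{\ell j}|\le [M_{\ell i}-m_{\ell j}]^{+}+[M_{\ell j}-m_{\ell i}]^{+}$ leaves an uncancelled $O(1)$ remainder of size $2(d_2\ee^{-2d_1}-d_1\ee^{-2d_2})$ coming from the mismatch between the $\widehat P$--coefficients $2d_2/N$ and $2d_1/N$ of the upper and lower envelopes, and does not reproduce $\ee^{-2d_1}(\ee^{2d_2}-2d_2)$.

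What does close the gap, using only the upper envelope, is the elementary identity $|A_{\ell i}-A_{\ell j}|\le\max(A_{\ell i},A_{\ell j})\le\max(M_{\ell i},M_{\ell j})=M_{\ell i}+M_{\ell j}-\min(M_{\ell i},M_{\ell j})$ for nonnegative $A\le M$. Summing over $\ell$, the first two terms give the column sums $\|M[i]\|_{\ell_1}=\|M[j]\|_{\ell_1}=(1+d_1/N)^{-2N}(1+d_2/N)^{2N}$, while --- since every term in \eqref{eq:theoB2N-bis} is entrywise nonnegative --- one has $M\ge(1+d_1/N)^{-2N}\tfrac{2d_2}{N}\widehat P$, hence $\sum_{\ell}\min(M_{\ell i},M_{\ell j})\ge(1+d_1/N)^{-2N}\,2d_2$ for $i,j$ in the same class. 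This yields
\begin{equation*}
\bigl\|\BB_{2N}\vvv_{ij}\bigr\|_{\ell_1}\le 2\left(1+\tfrac{d_1}{N}\right)^{-2N}\left[\left(1+\tfrac{d_2}{N}\right)^{2N}-2d_2\right]\ \longrightarrow\ 2\,\ee^{-2d_1}\bigl(\ee^{2d_2}-2d_2\bigr)\,,
\end{equation*}
which is exactly \eqref{def:C(d_1,d_2)}. Note that the role of $\widehat P$ here is not that it ``cancels in the column difference'' of $\BB_{2N}$ --- that exact cancellation is only available in the linear case, where the matrix \emph{equals} its decomposition --- but that it guarantees a common overlap of the two columns of the majorant, which is what gets subtracted. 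Your structural observations (a) and (b), the limit $(1+d_1/N)^{-2N}\to\ee^{-2d_1}$, and the reading of \eqref{cond-on-d_*} as $C(d_1,d_2)<1$ are all correct.
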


\begin{proof} From \eqref{BB_2N}, one can estimate the term $ \left[B(0) + \frac{d_2}N B_{1} \right]^{2N}$ on the right hand side
as in the proof of Theorem~\ref{theo:main}. Then as in the proof of Proposition~\ref{prop:5.8}, the conclusion follows easily with 
\begin{equation*}
C_N(d_1,d_2) \dot = \left(1+ \frac {d_1}N \right)^{-2N} \left[\ee^{2d_2}-2d_2   ~+~  {\frac1N} [f_0(d_2)  + f_1(d_2)]  \right]\,.
\end{equation*}
\end{proof}

\begin{proof}[of Theorem 1.1] 
To prove \eqref{L-infty-decay} in Theorem~\ref{main-theorem} we employ the main results in this section, namely
Proposition~\ref{prop:5.2}, Lemma~\ref{lem:decomp}, Theorem~\ref{theo:main} and Proposition~\ref{nonlinear-damping}. 
About the estimate for $J$, we proceed as follows. 

\begin{itemize}

\smallskip
\item[$\bullet$] We start from \eqref{bound-on-sup-J}, that is
\begin{equation*}
\|J_\DX(\cdot,t)\|_{\infty} \le \frac{1}{2N} \tv \bar J_0 
+ \|\BB_n  \widetilde{\ssigma}(0+)\|_{\ell^1} \,.
\end{equation*}

\smallskip
\item[$\bullet$]  Let $n\in \N$, $0\le h\in \N $ and $2Nh \le n < 2N(h+1)$, so that
\begin{equation}\label{rel:n-h-N}
2 h \le \frac{n}{N} = {n\DT} = {t^n} < 2(h+1)\,,\qquad h\ge 0\,.
\end{equation}
Since ${E_-}$ is an invariant subspace for all $B^{(n)}$, 
we have
$$
\widetilde{\ssigma}(t^n)=\BB_n\widetilde{\ssigma}(0+) \in {E_-} \quad \forall \, n\,.
$$
Hence by Proposition~\ref{nonlinear-damping} and using that 
$\bigl\|B^{(n)} v \bigr\|_{\ell_1}\le \bigl\| v \bigr\|_{\ell_1}$ for all $v\in \R^{2N}$, the following holds
\begin{align*}
\bigl\|\widetilde{\ssigma}(t^{n})\bigr\|_{\ell_1} =\bigl\|\BB_n\widetilde{\ssigma}(0+)\bigr\|_{\ell_1} \le & ~\bigl\|\BB_{2Nh} \widetilde{\ssigma}(0+)\bigr\|_{\ell_1}\\
=  & ~ \bigl\|\BB_{2N}\left(\BB_{2N(h-1)} \widetilde{\ssigma}(0+)\right)\bigr\|_{\ell_1}\\
\le & ~ C_{N}  \bigl\| \BB_{2N(h-1)} \widetilde{\ssigma}(0+) \bigr\|_{\ell_1}\\
\le & ~ C_{N}^{h}  \bigl\| \widetilde{\ssigma}(0+)\bigr\|_{\ell_1}\,.
\end{align*}
\smallskip
Let $\delta>0$ satisfy $[C-\delta,C+\delta]\subset (0,1)$, and choose 
$N$ large enough so that $C_N(d_1,d_2)\in [C-\delta,C+\delta]$. One can easily get
\begin{align*}
\left|C_N (d_1,d_2)- C(d_1,d_2)\right| \le ~ & \frac{1}{N} \left(1+ \frac {d_1}N \right)^{-2N}\left[f_0(d_2)  + f_1(d_2)\right] \\
& \qquad + \left(\ee^{2d_2}-2d_2\right)\cdot \ee^{-2d_1}\left(\left(1+ \frac {d_1}N \right)^{2}~-1\right)\\
\le ~& \frac{1}{N} \hat{C}(d_1,d_2)
\end{align*}
for a suitable constant $\hat{C}(d_1,d_2)>0$. Therefore one has
$$
\left|C_{N}^{h}-C^{h}\right|\le \left|C_{N}-C\right|\cdot h |\xi|^{h-1}, \qquad \forall \ h\ge 1\,,
$$
for some $\xi \in [C-\delta,C+\delta] \subset (0,1)$. Since the quantity $h |\xi|^{h-1}$ is uniformly bounded for $h\ge 1$ and 
$\xi\in [C-\delta, C+\delta]$, then we deduce that for some $ \hat{C}_0>0$ one has
\begin{equation*}
\bigl\|\BB_n\widetilde{\ssigma}(0+)\bigr\|_{\ell_1} \le\left(C^h + \frac{\hat{C}_0}N \right)  \bigl\| \widetilde{\ssigma}(0+)\bigr\|_{\ell_1}
\end{equation*}
where $n$, $N$, $h$ satisfy \eqref{rel:n-h-N}.

\smallskip
\item[$\bullet$]  From \eqref{eq:decomp-0} we have that
$$
 \widetilde{\ssigma}(0+)=\ssigma (0+) - \frac{(\ssigma(0+)\cdot v_-)}{2N}\,v_-,
$$
and then
\begin{equation*}
\bigl\| \widetilde{\ssigma}(0+)\bigr\|_{\ell_1}\le \bigl\| \ssigma(0+)\bigr\|_{\ell_1}+\frac{\bigl\|\ssigma(0+)\bigr\|_{\ell_1}}{2N} 2N 
= 2\bigl\| \ssigma(0+)\bigr\|_{\ell_1}.
\end{equation*}
Moreover, using \eqref{ineq:sizes} and \eqref{size-at-boundary}, we have
\begin{equation*}
\bigl\| \ssigma(0+)\bigr\|_{\ell_1} \le\, \tv \rho_{0}  + \tv \bar{J}_{0} +  2 C_0 \|k\|_{L^1}
\end{equation*}
where $\bar J_0$ is defined at \eqref{def:bar_J0}. Therefore it holds, for $h \le\frac{t^n}{2}\le (h+1)$:
\begin{equation*}
\bigl\|\BB_n\widetilde{\ssigma}(0+)\bigr\|_{\ell_1} \le 2 \left(C^h + \frac{\hat{C}_0}N \right) \left( \tv \rho_{0}  + \tv \bar{J}_{0} +  2 C_0 \|k\|_{L^1}\right)\,.
\end{equation*}
Using the relation \eqref{rel:n-h-N} for $h$, $n$ and $N$, we have
\begin{align*}
C^{h}\le ~ C^{\frac {t^{n}}{2} -1}= & ~ \frac 1 C \ee^{-|\log C|\left(\frac {t^{n}}{2}\right)}\,.
\end{align*}
In conclusion we get
\begin{align*}
\|J_\DX(\cdot,t^n)\|_{\infty} & \le \frac{1}{2N} \left\{\tv \bar J_0 +  4 {\hat{C}_0}  \left( \tv \rho_{0}  + \tv \bar{J}_{0} +  2 C_0 \|k\|_{L^1}\right)\right\}\\
& \qquad +  \frac 2 C \ee^{-|\log C|\left(\frac {t^{n}}{2}\right)} \left( \tv \rho_{0}  + \tv \bar{J}_{0} +  2 C_0 \|k\|_{L^1}\right)
\end{align*}
that leads to the first inequality in \eqref{L-infty-decay} for suitable constants $\hat C_j$ which are independent of $\DX$ and $t$. 
The constant $\hat C_3$ is given by
\begin{equation*}
\hat C_3 = \frac{1}{2} |\log C(d_1,d_2)| \qquad     C(d_1,d_2) = \ee^{-2d_1}(\ee^{2d_2}-2d_2)\,.
\end{equation*}
\end{itemize}
Starting from \eqref{bound-on-sup-rho}, the second inequality in \eqref{L-infty-decay}, for the $\rho$ variable, is obtained in a similar way.
\end{proof}
\thanks{\textsl{Acknowledgements.} 
This research was partially supported by Miur-PRIN 2015, 
Grant No. 2015YCJY3A\_003, and by 2018 INdAM-GNAMPA Project "Equazioni iperboliche e applicazioni". 
The authors wish to thank Pietro Dell'Acqua, Laurent Gosse, Nicola Guglielmi, Fabrizio Nesti, 
Michael L.\ Overton, Philippe Thieullen and Enrique Zuazua for several interesting discussions done at various stages of this work. 
Finally we thank both referees for their careful reading and their valuable comments.}

\affiliationone{%
   D. Amadori and F. Aqel and E. Dal Santo \\
   DISIM, University of L'Aquila\\
   Italy
   \email{debora.amadori@univaq.it\\
              fatimaalzahraan.aqel@graduate.univaq.it\\ 
              dalsantoedda@gmail.com}}
\end{document}